\documentclass[a4paper,oneside,10pt]{article}

\usepackage[utf8]{inputenc}  
\usepackage{nicefrac}
\usepackage{amsmath,amssymb}
\usepackage{dsfont}
\usepackage{array}
\usepackage{booktabs}
\usepackage{graphicx}
\usepackage{bbm}   
\usepackage{comment}
\usepackage{url}
\usepackage{nameref}
\usepackage[colorlinks, linkcolor = black, citecolor = black, filecolor = black, urlcolor = blue]{hyperref} 
\usepackage{ifthen}
\usepackage{natbib}
\usepackage{color}
\usepackage{epstopdf}
\usepackage{caption}
\usepackage{subcaption}
\usepackage{arydshln}
\usepackage{longtable}
\usepackage{marvosym}
\usepackage{cases}
\usepackage{enumitem}
\usepackage{amsthm}

\usepackage[a4paper, total={6in, 8in}]{geometry}
\usepackage{pgfplots}
\usetikzlibrary{arrows.meta}
\usepackage{tikz}


\theoremstyle{plain}
\newtheorem{theorem}{Theorem}
\newtheorem{lemma}{Lemma}

\newtheorem{corollary}[theorem]{Corollary}
\newtheorem{assumption}{Assumption}
\theoremstyle{remark}
\newtheorem{remark}[lemma]{Remark}

\newtheorem{example}[lemma]{Example}
\newtheorem*{remark*}{Remark}

\newcommand{\wjk}[3]{w_{j,k}({#1},{#2};{#3})} 

\newcommand{\wijk}[3]{w_{i;j,k}({#1},{#2};{#3})} 



\newcommand{\Zi}[2]{Z_{i;{#1},{#2}}^{\otimes2}}
\newcommand{\PclassCov}{\mc P(\gamma, C_Z, \beta, L)}
\newcommand{\Zclass}[0]{\mc P(\gamma)}

\newcommand{\covest}[3]{\hat \Gamma_{n}({#1}, {#2};{#3})}


\newcommand{\Kh}[2]{K_{h}\left(\begin{array}{c}{#1}\\{#2}\end{array}\right)}
\newcommand{\Uh}[2]{U_{m,h}\left(\begin{array}{c}{#1}\\{#2}\end{array}\right)}
\newcommand{\Cmax}{C_1}
\newcommand{\Clip}{C_2}
\newcommand{\Ccard}{C_3}
\newcommand{\Csum}{C_4}

\newcommand{\mc}{\mathcal}
\newcommand{\dx}{\mathrm{d}}

\newcommand{\bs}[1]{\boldsymbol{#1}} 
\newcommand{\Quad}{\qquad \quad} 
\newcommand{\QQuad}{\qquad \qquad}

\newcommand{\expec}{{\mathbb{E}}}
\newcommand{\prob}{{\mathbb{P}}}
\newcommand{\var}{{\operatorname{\mathbb{V}\mathrm{ar}}}}
\newcommand{\cov}{{\operatorname{\mathbb{C}\mathrm{ov}}}}

\newcommand{\ind}{\text{\bf{1}}} 
\newcommand{\one}{\mathbbm{1}}

\newcommand{\supp}{{\operatorname{supp}}}
\newcommand{\floor}[1]{\left\lfloor#1\right\rfloor}
\newcommand{\ceil}[1]{\left\lceil#1\right\rceil}
\newcommand{\abs}[1]{\lvert #1 \rvert}
\newcommand{\KL}{\operatorname{KL}}
\newcommand{\diam}{\operatorname{diam}}
\newcommand{\card}{\operatorname{card}}
\newcommand{\tr}{{\operatorname{trace}}}
\DeclareMathOperator*{\argmin}{argmin}

\newcommand{\subG}{{\operatorname{subG}}} 

\newcommand{\Oop}[1]{{\operatorname{\mathcal{O}}\left(#1\right)}}
\newcommand{\norm}[1]{{\lVert#1\rVert}} 
\newcommand{\normb}[1]{{\big\lVert#1\big\rVert}}

\newcommand{\normm}[1]{{\bigg\lVert#1\bigg\rVert}}

\newcommand{\skpb}[2]{{\big\langle#1,#2\big\rangle}}  
\newcommand{\absb}[1]{\big|#1\big|}
\newcommand{\Abs}[1]{\Big|#1\Big|}
\newcommand{\abss}[1]{\bigg|#1\bigg|}

\newcommand{\absA}[1]{\left|#1\right|}
\newcommand{\vecTwo}[2]{\left(\begin{array}{c}#1\\#2\end{array}\right)}

\newcommand{\R}{{\mathbb{R}}}
\newcommand{\N}{{\mathbb{N}}}


\newcommand*{\defeq}{\mathrel{\vcenter{\baselineskip0.5ex \lineskiplimit0pt
			\hbox{\scriptsize.}\hbox{\scriptsize.}}}%
	=}
\newcommand*{\defeql}{ = \mathrel{\vcenter{\baselineskip0.5ex   	\lineskiplimit0pt
			\hbox{\scriptsize.}\hbox{\scriptsize.}}}%
}

\newcommand{\lessneqsim}{\raisebox{-0.15cm}{~\shortstack{$\ll$ \\[-0.05cm]
			$\sim$}}~}


\newcommand{\wjks}[4]{w_{j,k}^{({\bs{#4}})}({#1},{#2};{#3})}

\newcommand{\cO}{\ensuremath{\mathcal{O}}} %



\title{Optimal rates for estimating the covariance kernel from synchronously sampled functional data}


%
\author{Max Berger and Hajo Holzmann\footnote{Corresponding author. Prof.~Dr.~Hajo Holzmann, Department of Mathematics and Computer Science, Philipps-Universität Marburg, Hans-Meerweinstr., 35043 Marburg, Germany} \\
\small{Department of Mathematics and Computer Science}  \\
\small{Philipps-Universität Marburg} \\
\small{\{mberger, holzmann\}@mathematik.uni-marburg.de}}

\date{} 

\begin{document}

\maketitle

\begin{abstract}
    We obtain minimax-optimal convergence rates in the supremum norm, including information-theoretic lower bounds, for estimating the covariance kernel of a stochastic process which is repeatedly observed at discrete, synchronous design points. We focus on the supremum norm instead of the simpler $L_2$ norm, since it corresponds to the visualization of the estimation error and forms the basis for the construction of uniform confidence bands. For dense design, {\color{black} assuming Hölder-smooth sample paths} we obtain the $\sqrt n$-rate of convergence in the supremum norm without additional logarithmic factors which typically occur in the results in the literature. 
    Surprisingly, in the transition from dense to sparse design the rates do not reflect the two-dimensional nature of the covariance kernel but correspond to those for univariate mean function estimation.  
    Our estimation method can make use of higher-order smoothness of the covariance kernel away from the diagonal, and does not require the same smoothness on the diagonal itself. Hence, our results 
    cover covariance kernels of processes with rough, {\color{black} non-differentiable} sample paths. Moreover, the estimator does not use mean function estimation to form residuals, and no smoothness assumptions on the mean have to be imposed. In the dense case we also obtain a central limit theorem in the supremum norm, which can be used as the basis for the construction of uniform confidence sets. {\color{black} Extensions to estimating partial derivatives as well as to asynchronous designs are also discussed.}  
    Simulations and real-data applications illustrate the practical usefulness of the methods. 
\end{abstract}

\vspace{3mm}

\noindent \textit{Keywords.} Covariance kernel, functional data, optimal rates of convergence, supremum norm, synchronously sample data
		


\section{Introduction}

Mean function and covariance kernel are the two most-important parameters of a stochastic process having finite second moments. Estimates of the covariance kernel of a repeatedly observed stochastic process allow to assess its variability, in particular through the associated principle component functions \citep{cai2010nonparametric, hall2009theory, ramsay1998functional}. Further, the smoothness of the paths of a Gaussian process is closely related to the smoothness of the covariance kernel on the diagonal \citep{azmoodeh2014necessary}. Therefore, estimates of the covariance kernel allow to draw conclusions on the path properties of the observed process. 

In the setting of functional data analysis, stochastic processes are repeatedly observed at discrete locations and potentially with additional observation errors \citep{wang2016functional, cai2011optimal}. A deterministic, synchronous design refers to fixed, non-random observation points which are equal across functions. It typically arises for machine recorded data, such as weather data at regular time intervals at weather stations. In contrast, in random, asynchronous designs observation points are realizations of independent random variables. 

Estimating the covariance kernel has been intensely investigated for random asynchronous design. Here, \citet{li2010uniform, zhang2016sparse, hall2006properties, xiao2020asymptotic, mohammadi2024functional} obtain rates of convergence in $L_2$ and also in the supremum norm under Hölder smoothness assumptions on the covariance kernel. \citet{cai2010nonparametric} derive optimal rates in the random design setting under the assumptions that the sample paths of the process are contained in a particular reproducing kernel Hilbert space. {\color{black} Estimates of the covariance kernel from functional snippets under a semiparametric specification are obtained and analysed in \citet{lin2022mean}.} 

For fixed synchronous design there are fewer contributions. \citet{zbMATH06254554} give a consistency result in the supremum norm, and \citet{xiao2020asymptotic} presents rates in $L_2$ and in the sup-norm for spline estimators over Hölder smoothness classes. 

In the present paper we comprehensively analyse estimates of the covariance kernel for fixed synchronous design. We obtain optimal rates of convergence in a minimax sense over Hölder smoothness classes, including information-theoretic lower bounds. In our analysis we focus on the supremum norm instead of the simpler $L_2$ norm, since it corresponds to the visualization of the estimation error and forms the basis for the construction of uniform confidence bands. 
In particular, for dense design we obtain the $\sqrt n$ rate of convergence in the supremum norm and also the associated central limit theorem without the additional logarithmic factors, which are prevalent in the literature \citep{xiao2020asymptotic, li2010uniform, zhang2016sparse, mohammadi2024functional}.  
Our method can make use of higher-order smoothness of the covariance kernel away from the diagonal, and does not require the same amount of smoothness on the diagonal itself. Thus, as in \citet{mohammadi2024functional} our results apply to covariance kernels of processes with relatively rough, in particular non-differentiable sample paths. Notably, our estimator does not require mean estimation and forming residuals. Hence virtually no smoothness assumptions on the mean function are required, in contrast to e.g.~\citet{xiao2020asymptotic}. 
In the transition from dense to sparse design the optimal rates that we obtain do not reflect the two-dimensional nature of the covariance kernel but correspond to those for univariate mean function estimation as recently obtained in \citet{berger2023dense}.  A similar phenomenon has been observed in \citet{cai2010nonparametric, hall2006properties} but for somewhat different settings. The matching lower bounds, in particular in the sparse-to-dense transition regime, require technically involved arguments.


The paper is organized as follows. In Section \ref{sec:model:estimators} we introduce the model as well as our estimator, a modification of the local polynomial estimator restricted to observation pairs above the diagonal. Section \ref{sec:rate_cov_estimation} contains our main results: upper and matching lower bounds for the rate of convergence in the supremum norm. These are complemented by a central limit theorem in the space of continuous functions in  Section \ref{sec:asympnorm}. 
{\color{black} In Section \ref{sec:extensions} we discuss extensions of our method to estimating partial derivatives of the covariance kernel, as well as to asynchronous designs, in which a preliminary estimate of the mean is required. }

%
Section \ref{sec:simapp} contains simulations and a real-data application. 
In the simulations in Section \ref{sec:sims} we investigate the effect of the choice of the bandwidth and propose a cross-validation scheme for bandwidth selection. Further we  simulate the contribution to the sup-norm error of various components in the error decomposition, thus illustrating our theoretical analysis. Moreover we demonstrate numerically the need to leave out empirical variances of the data points and to restrict smoothing to the upper triangle for covariance kernels which are not globally smooth.  
%
In Section \ref{sec:realdata} we provide an application to a data set of daily temperature series, and discuss how estimates of the standard deviation curves and the correlation functions vary over the year. 
Section \ref{sec:conclude} concludes. Proofs of the main results are gathered in Section \ref{sec:proofs}. The supplementary appendix contains further technical material. An R package \verb+biLocPol+ can be found on \href{https://github.com/mbrgr/biLocPol}{Github}, together with the \href{https://github.com/mbrgr/Optimal-Rates-Covariance-Kernel-Estimation-in-FDA.git}{\texttt{R}-Code} used for the data in this paper.

\smallskip

We conclude the introduction by introducing some relevant notation. 
%
For sequences $(p_n), (q_n)$ tending to infinity, we write $p_n \lesssim q_n$, if $p_n =\Oop{q_n}$, $p_n \simeq q_n$ if $p_n \lesssim q_n$ and $q_n\lesssim p_n$, and $p_n \lessneqsim q_n$, if $p_n = o(q_n)$.  $\mc O_{\prob,  h \in (h_1, h_0]}$ denotes stochastic convergence uniformly for $h \in (h_1 , h_0]$. Finally, $\norm{\,\cdot\,}_\infty$ denotes the supremum norm, where the domain becomes clear from the context. 

\section{The model, smoothness classes and linear estimators} \label{sec:model:estimators}


Let the observed data $(Y_{i,  j}, x_{ j})$ be distributed according to the model \citep{berger2023dense, cai2011optimal}
\begin{align}
	Y_{i,  j}= \mu( x_{ j}) + Z_i( x_{ j}) + \epsilon_{i,j} \,, \quad  i=1,\dotsc,n\,, \ j = 1,\ldots,p  \,, \label{eq:model}
\end{align}
where $Y_{i, j}$ are real-valued response variables and the {\color{black}$x_j \in [0,1]$} are known non-random design points with $x_1 < \ldots < x_{p}$.
The processes $Z_1,\dotsc,Z_n$ are i.i.d.~copies of a mean-zero, square integrable random process $Z$ with an unknown covariance kernel 
$$\Gamma(x,y) = \expec[Z(x)\,Z(y)], \qquad x,y \in [0,1],$$ the estimation of which we shall focus on in this paper. The errors $\epsilon_{i,j}$ are independent with mean zero and are also independent of the $Z_i$, and the mean function $\mu$ is unknown. 
The number of design points $p \defeq p_n$ as well as the design points $ x_{j}= x_j(p,n)$ themselves depend on the number $n$ of functions which are observed.

Following the ideas in \citet{mohammadi2024functional}, by symmetry $\Gamma(x,y) = \Gamma(y,x)$ it suffices to estimate $\Gamma$ on the upper triangle 
\begin{align}
	T \defeq \{(x,y) \in  [0,1]^2 \mid x \leq y\}. \label{eq:upper_triangle}
\end{align}


As estimator for $\Gamma$ at $(x,y) \in T$ we consider 
%
\begin{equation}
\color{black}
    \covest xyh \defeq  \sum_{j<k}^p \wjk xyh \, z_{j,k;n} , \quad \text{where } \quad    z_{j,k;n} \defeq  \frac1{n-1}\sum_{i=1}^n\big(Y_{i,j} Y_{i,k} - \bar Y_{n, j} \bar Y_{n, k}\big)\label{eqn:estimatorCovariance}
\end{equation}
with   $\bar Y_{n,j } = n^{-1}\sum_{i=1}^n Y_{i,j}$,  $j,k \in \{1, \ldots, p\}$. The $\textbf{}$ are the entries of the empirical covariance matrix; $h>0$ is a bandwidth parameter and  $\wjk xyh = w_{j,k;p}(x,y;h;x_{1},\dotsc,x_{p})$ are weights which satisfy the assumptions listed in Section \ref{sec:rate_cov_estimation}. 
Note that we leave out the diagonal ($j = k$) in order to avoid bias induced by the squared errors $\epsilon_{i,j}^2$, and furthermore that following \citet{mohammadi2024functional} we build the estimator with observation pairs above the diagonal $j < k$ to avoid the potential lower smoothness of the covariance kernel on the diagonal. Let us also stress that the estimator does not require an estimator $\hat \mu_n$ of the mean function $\mu$ to form residuals $Y_{i,j} - \hat \mu_n(x_j)$. Indeed, {\color{black} the entries $z_{j,k;n}$ of the empirical covariance matrix} and hence $\covest xyh$ are independent of $\mu$. Therefore  no assumptions on $\mu$ are required, see also the discussion in Remark \ref{rem:meanest}. For $x^\prime, y^\prime \in [0,1]$ with $x^\prime>y^\prime$ we simply set
$\hat \Gamma_n(x^\prime,y^\prime;h) : = \hat \Gamma_n(y^\prime,x^\prime;h)$. Then 
$$ \normb{\hat \Gamma_{n}(\cdot; h) - \Gamma}_\infty = \sup_{x,y \in [0,1]} \big|\hat \Gamma_n(x,y;h) - \Gamma(x,y)\big| = \sup_{(x,y) \in T} \big|\hat \Gamma_n(x,y;h) - \Gamma(x,y)\big|,$$
so that we may focus on the analysis of the estimator on $T$. 
\begin{example}[Local polynomial estimator]\label{ex:localpolynomial}
   We shall show that restricted bivariate local polynomial estimators of order $m \in \N_0$, that is the first coordinate $(\hat \vartheta(x,y))_1$ of the vector 
    \begin{equation}
        \hat \vartheta(x,y) = \argmin_{\vartheta} \sum_{j<k}^p \bigg( {\color{black} z_{j,k;n}} - \bs \vartheta^\top U_m\left(
\begin{array}{c}
(x_j -x)/h\\
(x_k-y)/h\\
\end{array}
\right)\bigg)^2 K\left(
\begin{array}{c}
(x_j -x)/h\\
(x_k-y)/h\\
\end{array}
\right), \quad x \leq y,\label{eqn:minimisationProblemLocPol}
    \end{equation}
are linear estimators under mild assumptions and satisfy our requirements in Assumption \ref{ass:weights} in the next section. Here, 
$K$ is a bivariate non-negative kernel, $h>0$ is a bandwidth and $U_m\colon \R^2 \to \R^{N_m}$ with $N_m \defeq \frac{(m+1)(m+2)}2$ is a vector containing the monomials up to order $m$, with the constant as first entry, that is
\begin{align*}
	U_m(u_1,u_2)\defeq \big(1, P_{1}(u_1, u_2), \ldots, P_{m}(u_1,u_2)\big)^\top,\quad u_1,u_2\in[0,1]\,,
\end{align*}
where 
\begin{equation*}
	P_l(u_1, u_2)\defeq\bigg(\frac{u_1^l}{l!}, \frac{u_1^{l-1}u_2}{(l-1)!}, \frac{u_1^{l-2}u_2^2}{(l-2)!2!},\ldots, \frac{u_2^l}{l!}\bigg), \quad u_1,u_2 \in [0,1]\,.
\end{equation*}

\end{example}


Now let us turn to the assumptions that we impose on the process $Z$ and its covariance function. 
  
A function $f\colon T \to \R$ is H\"older-smooth with order $\gamma>0$ 
if for all indices $ \bs s=(s_1,s_2) \in \N_0^2$ with $\abs{\bs s} = s_1 + s_2 \leq \lfloor \gamma \rfloor =\max\{  k\in\N_0 \mid k<\gamma \} \defeql k$, 
 the partial derivatives $D^{\bs s} f(w) = \partial_1^{s_1}\, \partial_2^{s_2} f(v)$, $v \in T$ exist and if the H\"older-norm given by
$$\norm{f}_{\mc H, \gamma;T}\defeq\max_{\abs{\bs s}\leq k} \sup_{v \in T} \abs{D^{\bs s} f(v)}+ \max_{\abs{\bs s}=k}\sup_{v, w \in T,\, v\neq w} \frac{|D^{\bs s} f(v)-D^{\bs s} f(w)|}{\norm{v-w}_\infty^{\gamma-k}}$$
is finite.
	%
%
Define the H\"older class with parameters $\gamma>0$ and $L>0$ on $T$  by
\begin{equation}\label{def:hoelder:class}
    \mc H_{T}(\gamma, L) = \big\{f\colon T \to \R \mid \norm{f}_{\mc H, \gamma;T} \leq L \big\}.
\end{equation}

{\color{black} For a function $f$ on $[0,1]^2$, $\norm{f}_{\mc H, \gamma;[0,1]^2}$ and $\mc H_{[0,1]^2}(\gamma, L)$ are defined analogously.} 
Note that a symmetric function $f$ on $[0,1]^2$, {\color{black} if restricted to $T$} can be contained in $\mc H_{T}(\gamma, L)$ for $\gamma >1$ even if $f$ is not partially differentiable on the diagonal of $[0,1]^2$ and hence not Hölder smooth of order greater than $1$ on $[0,1]^2$.  {\color{black} Indeed, while $f \in \mc H_{T}(\gamma, L)$ for $\gamma \leq 1$ implies $f \in \mc H_{[0,1]^2}(\gamma, 2\,L)$, for $\gamma > 1$ it only implies $f \in \mc H_{[0,1]^2}(1, 2\,L)$.}
For covariance functions of second order stochastic processes, roughly speaking  
{\color{black} existence of the partial derivatives up to order $2\,k$ of the covariance kernel of a centered Gaussian process} 
in a neighborhood of the diagonal of $[0,1]^2$ implies that the paths are
a.s.~differentiable in mean square of order $k$, which together with some small additional assumptions implies that the paths are a.s.~
$k$-times continuously differentiable 
\citep[Section 1.4]{azais2009level}. Thus, as stressed in \citet{mohammadi2024functional} processes with relatively rough sample paths such as the Brownian motion do not have covariance kernels which are smooth on $[0,1]^2$. However, as is the case for Brownian motion, these kernels can still be smooth on $T$, and our estimator will be able to make use of higher order smoothness of $\Gamma$ restricted to $T$.





\vspace{2mm}

For the process $Z$  we further assume that $\expec[Z(0)^4] < \infty$ and that the paths are Hölder continuous of some potentially low order: there exists $0 < \beta \leq 1$ and a random variable $M = M_Z>0$ with $\expec [M^4] < \infty $ such that 
\begin{equation}\label{eq:hoeldercontpathsZ}
	\big|Z( x)-Z( y)\big| \leq M \, | x- y|^\beta, \qquad x,  y \in [0,1] \quad \text{almost surely.}
\end{equation}

%
%
Given $C_Z>0$ and $0 < \beta_0 \leq 1$, we consider the class of processes 
\begin{align}
	\Zclass \defeq \mc P (\gamma;L,\beta_0, C_Z) & = \big\{ Z: [0,1] \to \R \ \text{centered random process} \mid \exists \ \beta \in [\beta_0,1] \text{ and } M\nonumber\\& \qquad  \text{ s.th. }
	 \expec [M^4] + \expec [Z( 0)^4]\leq C_Z \text{, \eqref{eq:hoeldercontpathsZ} holds} \label{eq:classprocesses} \text{ and  } \Gamma_{\mid T} \in \mc H_T(\gamma,L)\big\}. 
\end{align}	

\begin{example}[Gaussian processes]\label{ex:Gaussian}
If $Z$ has covariance function $\Gamma$ which satisfies $\Gamma_{\mid T} \in \mc H_T(\gamma,L)$, then  
{\color{black} 
\begin{align*}
\dx_Z(x,y) \defeq \expec\big[(Z(x) - Z(y))^2\big] & \leq \big|\Gamma(x,y) - \Gamma(x,x) \big| + \big|\Gamma(x,y) - \Gamma(y,y) \big|\\
& \leq 2\, L \, |x-y|^{\min(\gamma,1)}    
\end{align*}
by using the mean value theorem in case $\gamma > 1$. }
Hence Theorem 1 in \citet{azmoodeh2014necessary} implies that for a (centered) Gaussian process $Z$ with $\Gamma_{\mid T} \in \mc H_T(\gamma,L)$ the sample paths satisfy \eqref{eq:hoeldercontpathsZ} for each $\beta < \min(\gamma,1)/2$, and the fourth moment of the Hölder constant $M$ can be bounded in terms of $\beta, \gamma$ and $L$. Since the rate of convergence for estimating $\Gamma$ will not depend on $\beta_0$, our results therefore also apply to the class
\begin{equation}\label{eq:Gaussianclass}
 \mc P_{\mc G}(\gamma) \defeq \mc P_{\mathcal G} (\gamma;L) = \big\{ Z: [0,1] \to \R \ \text{centered Gaussian process} \mid \Gamma_{\mid T} \in \mc H_T(\gamma,L)\big\}.
\end{equation} 
\end{example}



\medskip


\section{Optimal rates of convergence for covariance kernel estimation and asymptotic normality}\label{sec:rate_cov_estimation}

\subsection{Upper bounds}

To derive the upper bounds consider the following assumptions on the weights of the linear estimator in \eqref{eqn:estimatorCovariance}, on the distribution of the errors and on the design. 

\begin{assumption}\label{ass:weights} 
	There is a $c>0$ and a $h_0>0$ such that for  sufficiently large $p$, for the weights $\wjk xyh$ the following holds for all $h \in (c/p_, h_0]$ for constants $\Cmax, \Clip>0$ which are independent of $n, p,h$ and $(x, y) \in T$.
	\begin{enumerate}[label=\normalfont{(W\arabic*)},leftmargin=9.9mm]
		\item The weights reproduce polynomials of a degree $\zeta \geq 0$, that is for $(x,y)  \in T$,
		\begin{align*} 
			\sum_{ j<k}^{ p}  \wjk xyh =1\,, \quad \sum_{j<k}^{ p}( x_{ j}- x)^{r_1}(x_k - y)^{r_2} \,  \wjk xyh = 0\,, 
		\end{align*} \label{ass:weights:polynom}
        for $r_1,r_2 \in \N_0$ s.t. {\color{black} $1 \leq $} $r_1 + r_2 \leq \zeta$.
		\item We have $\wjk xyh = 0$ if $\max(\abs{x_j-x}, \abs{x_k - y})> h$ with $(x,y) \in T$. \label{ass:weights:vanish}
		%
		%
		\item For the absolute values of the weights it holds $  \max_{j<k} \big| \wjk xyh\big|  \leq \Cmax (p\,h)^{-2},\,  x \leq y.$  \label{ass:weights:sup}
		\item For a Lipschitz constant $\Clip > 0$ it holds that
		\begin{align*}
			\absb{  \wjk xyh -  \wjk{x^\prime}{y^\prime}h} \leq \frac{\Clip}{( p\, h)^2} \bigg(\frac{\max(\abs{x-x^\prime}, \abs{x-y^\prime})}h \wedge 1\bigg), \ \;x\leq y,\, x^\prime\leq y^\prime \,.
		\end{align*} \label{ass:weights:lipschitz}
        %

	\end{enumerate}		
\end{assumption}

{\color{black} These are checked in Section \ref{sec:proof:lemma:locpol:weights} of the supplementary appendix for the restricted form of the local polynomial weights of Example \ref{ex:localpolynomial}  for designs with a continuous, non-zero design density, see Assumption \ref{ass:designdensity} in the supplement.} 

\begin{assumption}[Sub-Gaussian errors] \label{ass:distribution}
		The random variables $\{\epsilon_{i, j} \mid 1 \leq i \leq n,\,  1 \le  j \le  p\}$ are independent and independent of the processes $Z_1,\dotsc,Z_n$. Further we assume that the distribution of $\epsilon_{i, j}$ is sub-Gaussian, and setting $ \sigma_{ij}^2 \defeq \expec[\epsilon_{i, j}^2]$ we have that $\sigma^2 \defeq \sup_n \max_{ i,j} \sigma_{ij}^2 < \infty$ and that there exists {\color{black}$\kappa\geq 1$} such that {\color{black}$\kappa^2\sigma_{i, j}^2$} is an upper bound for the squared sub-Gaussian norm of $\epsilon_{i, j}$.
\end{assumption}

\begin{assumption}[Design Assumption] \label{ass:design:localization}
	There is a constant $\Ccard > 0$ such that for each $x \in T$ and $h>0$ we have that
	\begin{align*}
		\card \big\{ j \in \{1, \ldots, p \}\mid x_j \in [x-h, x+h]\big\} & \leq \Ccard \,p\,h\,.
	\end{align*}
\end{assumption}

{\color{black} Assumption \ref{ass:design:localization}, which is from \citet[Assumptions (LP), p. 37]{tsybakov2008introduction}, implies that an $h$-neighborhood of any $x \in [0,1]$ can only contain up to order $p\,h$ design points. It is satisfied for designs with a Lipschitz-continuous design density \citep[Lemma 7]{berger2023dense}.  When additionally imposing high-level assumptions on the weights in \eqref{eqn:estimatorCovariance} which we collect in Assumption \ref{ass:weights}, Assumption \ref{ass:design:localization} can be used to bound the variance of the estimator. The construction of appropriate weights, see Section \ref{sec:proof:lemma:locpol:weights} of the supplementary appendix, requires stronger design assumptions. There we use designs with a continuous, non-zero design density, see Assumption \ref{ass:designdensity} in the supplement, which also implies a lower bound on the number of design points in $h$-neighborhoods.  }    




\begin{theorem}\label{thm:rates_cov_estimation}
	Consider model \eqref{eq:model} under Assumptions \ref{ass:design:localization} and \ref{ass:distribution}. Suppose that for given $\gamma>0$ the weights in the linear estimator $\hat \Gamma_{n}(\cdot; h)$ for the covariance kernel $\Gamma$ in \eqref{eqn:estimatorCovariance}  satisfy Assumption \ref{ass:weights} with $\zeta = \floor \gamma$. 
 Then for $0 < \beta_0 \leq 1$ and $L, C_Z>0$ we have that 
 \begin{align*}
		\sup_{h \in (c/p, h_0]}\, \sup_{Z \in \mc P (\gamma;L,\beta_0, C_Z)} a_{n,p,h}^{-1} \expec\Big[\normb{\hat \Gamma_{n}(\cdot; h)  - \Gamma}_\infty\Big] = \mc O(1)\,,
	\end{align*}
	where
	\begin{align}\label{eq:upperboundwithbandwitdh}
		a_{n,p,h} & = \max \Big( h^\gamma, \Big( \frac{\log(h^{-1})}{n\,p\,h}\Big)^{1/2}, n^{-1/2}\Big)\,.
	\end{align}
	Hence by setting
	$	h^\star \sim \max \Big( c/p, \Big(\frac{\log(n\,p)}{n\,p}\Big)^{\frac1{2\gamma + 1}} \Big)\,$
	we obtain 
    {\color{black} 
	\begin{align}\label{eq:rateofconvcovkernel}
		\sup_{Z \in \mc P (\gamma;L,\beta_0, C_Z)} \expec \Big[\normb{\hat \Gamma_{n}(\cdot; h^\star) - \Gamma}_\infty\Big] & = \mc O \Big(\max \Big(p^{-\gamma},  \Big(\frac{\log(n\,p)}{n\,p}\Big)^{\frac \gamma{2\gamma + 1}},  n^{-1/2}\Big) \Big)\,.
	\end{align}	
    }
 Furthermore, in both upper bounds the class $\Zclass = \mc P (\gamma;L,\beta_0, C_Z)$ can be replaced by $\mc P_{\mc G}(\gamma) = \mc P_{\mathcal G} (\gamma;L)$ in \eqref{eq:Gaussianclass}.
\end{theorem}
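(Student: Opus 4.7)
The core strategy is a bias-variance decomposition of $\covest xyh - \Gamma(x,y)$ followed by separate, uniform-in-$(x,y)$ bounds on each piece. Since $\frac{1}{n-1}\sum_i (Y_{i,j}Y_{i,k} - \bar Y_{n,j}\bar Y_{n,k})$ is the unbiased sample covariance of $(Y_{\cdot,j},Y_{\cdot,k})$, it is invariant under translation of the data and hence depends on $Y_{i,j}$ only through $\tilde Y_{i,j} \defeq Z_i(x_j)+\e_{i,j}$. This has two consequences: the mean function $\mu$ drops out entirely, so no smoothness of $\mu$ is needed, and the centering term $\bar{\tilde Y}_{n,j}\bar{\tilde Y}_{n,k}$ is a product of two sample means of centered variables, contributing uniformly a term of order $1/n$ that can be absorbed into the $n^{-1/2}$ part of $a_{n,p,h}$. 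Defining the bias
$$B(x,y;h) \defeq \sum_{j<k}^{p} \wjk xyh \, \Gamma(x_j,x_k) - \Gamma(x,y),$$
we bound $\|B(\cdot;h)\|_\infty$ by Taylor-expanding $\Gamma$ around $(x,y)\in T$ up to order $k=\lfloor\gamma\rfloor$: (W1) kills all polynomial terms of total degree $\leq k$, the Hölder remainder is $\leq L\,\|(x_j-x,x_k-y)\|_\infty^\gamma$, and (W2) plus (W3) together with Assumption \ref{ass:design:localization} give $\sum_{j<k}|\wjk xyh|=O(1)$, yielding $\|B(\cdot;h)\|_\infty = O(h^\gamma)$ uniformly over $\Zclass$.

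For the stochastic part, expand $\tilde Y_{i,j}\tilde Y_{i,k}$ into $Z_i Z_i$, $Z_i\e_i$, $\e_i Z_i$ and $\e_i\e_i$ pieces, giving three centered stochastic terms $T_Z$, $T_{Z\e}$ and $T_{\e}$ plus the $O(1/n)$ centering correction above. For $T_\e$ and $T_{Z\e}$ a direct variance computation using (W3) and Assumption \ref{ass:design:localization} (which limits the number of non-zero weights to $O((ph)^2)$) yields pointwise variance of order $1/(nph)$; a Bernstein/Hanson–Wright type inequality for quadratic forms of sub-Gaussian variables (conditional on the $Z_i$ for $T_{Z\e}$) then gives sub-exponential tails, and an $\varepsilon$-net of $T$ of cardinality $O(h^{-2})$, glued together by the Lipschitz property (W4), produces the uniform bound $O\!\big(\sqrt{\log(h^{-1})/(nph)}\big)$.

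The main obstacle is to obtain the $n^{-1/2}$ bound for $T_Z$ \emph{without} an extraneous $\sqrt{\log n}$ factor, as this is what drives the improvement over the literature highlighted in the introduction. For this one exploits the pathwise Hölder continuity \eqref{eq:hoeldercontpathsZ} to write
$$\sum_{j<k}^p \wjk xyh \, Z_i(x_j)Z_i(x_k) = Z_i(x)Z_i(y) + R_i(x,y;h),$$
with $\|R_i\|_\infty \leq C\,M_i\,h^{\beta_0}$ and $\expec[M_i^4]\leq C_Z$. The dominant term $\frac{1}{n-1}\sum_i (Z_i(x)Z_i(y)-\Gamma(x,y))$ is an empirical-process average over the class $\{Z\otimes Z:(x,y)\in T\}$; since each $Z_i\otimes Z_i$ is itself Hölder of exponent $\beta_0$ with a fourth-moment envelope, a single maximal inequality applied to the \emph{realised} paths, rather than chaining over the design grid, yields expected sup-norm $O(n^{-1/2})$ with no logarithmic factor. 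The remainder contributes $O(h^{\beta_0}/\sqrt n)$, which is dominated by $n^{-1/2}$. Summing the three bounds gives \eqref{eq:upperboundwithbandwitdh}; choosing $h$ to balance $h^\gamma$ against $\sqrt{\log(h^{-1})/(nph)}$ subject to the constraint $h\geq c/p$ leads to $h^\star$ and the final rate \eqref{eq:rateofconvcovkernel}. The extension to $\mc P_{\mc G}(\gamma)$ is immediate from Example \ref{ex:Gaussian}, which embeds every Gaussian $Z$ with $\Gamma_{|T}\in\mc H_T(\gamma,L)$ into $\mc P(\gamma;L,\beta_0,C_Z)$ for suitable $\beta_0$ and $C_Z$.
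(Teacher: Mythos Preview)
Your overall architecture—error decomposition into bias, pure-error, mixed $Z\e$, pure-$Z$, and higher-order centering pieces; Taylor plus (W1)–(W2) for the bias; Hanson–Wright plus an $\varepsilon$-net for the $\e\e$-term; conditional sub-Gaussian concentration plus chaining for the $Z\e$-term; and an empirical-process maximal inequality on Hölder paths for the $ZZ$-term—is exactly the paper's approach. The main difference in execution, and a gap in your write-up, concerns the $T_Z$ term.

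You split $\sum_{j<k}\wjk xyh Z_i(x_j)Z_i(x_k)=Z_i(x)Z_i(y)+R_i(x,y;h)$ and handle the two pieces separately. The paper avoids this entirely: by \ref{ass:weights:sum} one has directly
\[
\sup_{(x,y)\in T}\bigl|T_Z(x,y)\bigr|\ \le\ \Csum\cdot\sup_{(s,t)\in T}\Bigl|\tfrac{1}{n}\textstyle\sum_{i}\bigl(Z_i(s)Z_i(t)-\Gamma(s,t)\bigr)\Bigr|,
\]
so a \emph{single} application of Pollard's maximal inequality to the Hölder process $(s,t)\mapsto Z_iZ_i-\Gamma$ gives $O(n^{-1/2})$. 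Your assertion that ``the remainder contributes $O(h^{\beta_0}/\sqrt n)$'' is not free: the envelope bound $\|R_i\|_\infty\le C\tilde M_i\,h^{\beta_0}$ yields only
$\expec\bigl[\sup_{(x,y)}\bigl|\tfrac{1}{n}\sum_i(R_i-\expec R_i)\bigr|\bigr]=O(h^{\beta_0})$,
and this is \emph{not} dominated by $a_{n,p,h}$ whenever $\gamma>\beta_0$ and $h$ is of constant order. To recover the $\sqrt n$-factor you would need a second maximal inequality on the $R_i$-process, at which point the split buys nothing over the paper's one-line reduction via \ref{ass:weights:sum}.

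A smaller point: lumping the centering as ``$O(1/n)$'' glosses over the four $i\neq l$ cross-terms (cf.\ the paper's display \eqref{eq:decomp:independendTerms}), each of which the paper bounds separately. In particular the $\tfrac{1}{n(n-1)}\sum_{i\neq l}\e_{i,j}\e_{l,k}$ piece needs its own Hanson–Wright argument and comes out as $O(\log(np)/(nph))$ in sup-norm, and the $i\neq l$ mixed terms need the same conditional Dudley treatment as the $i=l$ case. All are indeed negligible, but not quite for the reason you give.
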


\smallskip

The proof is given in Section \ref{sec:proofupperbound}. 

\begin{remark}[Comments on the assumptions and the rate of convergence]
The rate in \eqref{eq:rateofconvcovkernel} consists of a discretization bias $p^{-\gamma}$, {\color{black} which dominates the rate in the sparse regime for relatively small $p$;} the $1/\sqrt n$ rate arising from the contribution of the processes $Z_i$ {\color{black} dominating  in the dense regime with large $p$}, as well as the intermediate term involving the errors which is specific to the use of the supremum norm. {\color{black} To achieve the parametric $1/\sqrt n$-rate without additional logarithmic terms, in contrast to e.g.~\citet{li2010uniform, zhang2016sparse} we additionally require some Hölder smoothness for the sample paths in \eqref{eq:hoeldercontpathsZ}}. 
Overall \eqref{eq:rateofconvcovkernel} is analogous to the rate obtained for the mean function in one dimension $d=1$ in \citet{berger2023dense}. Somewhat surprisingly, the fact that $\hat \Gamma_{n}(x,y; h)$ is a bivariate function neither influences the rate arising from the discretization bias $p^{-\gamma}$ nor that from the observation errors, $(\log(n\,p)/(n\,p))^{ \gamma/(2\gamma + 1)}$, where a factor $2\gamma + 2$ would be expected in the denominator of the exponent. This seems to be an improvement of the rates of covariance kernel estimation obtained in \citet{li2010uniform, zhang2016sparse}, and is reminiscent of the one-dimensional rates obtained in \citet{cai2010nonparametric, hall2006properties} for the principal component functions. {\color{black} Let us mention that this improvement comes at the cost of assuming sub-Gaussian errors in Assumption \ref{ass:distribution}, which is more restrictive than the moment assumptions used e.g.~in \citet{li2010uniform, zhang2016sparse}.}
  Overall, the discussions regarding the regimes in the rate \eqref{eq:rateofconvcovkernel} as well as a choice of $h$ independently of the smoothness $\gamma$ from \citet[Remarks 4 and 5]{berger2023dense} apply to this setting as well. 
\end{remark}

\begin{remark}[Error decomposition and proof techniques]\label{rem:errordecomp}

 In Lemma \ref{lem:cov_error_decomp} in the appendix we derive an error decomposition of the form
  { \color{black}
	\begin{align}
		\covest xyh  -\Gamma(x,y)& = \sum_{j<k}^{p} \wjk xyh \big( \Gamma({x_j},{x_k}) - \Gamma(x,y) \big)  + \sum_{j<k}^{p} \wjk xyh \frac1n \sum_{i=1}^n \epsilon_{i,j}\epsilon_{i,k} \label{eq:decomp:sketch} \\
		 & + \sum_{j<k}^{p} \wjk xyh \frac1n \sum_{i=1}^n\big(Z_i({x_j})Z_i({x_k}) - \Gamma({x_j},{x_k})\big) \nonumber\\
         & \, + \sum_{j<k}^{p} \wjk xyh \frac1n\sum_{i=1}^n \big(Z_i({x_j})\epsilon_{i,k} + Z_i({x_k})\epsilon_{i,j}\big) + \text{ higher order terms} \nonumber \\
         &\defeql A_1 + A_2 + A_3  + A_4 + \text{higher order terms}.\nonumber
	\end{align} 
 }
\end{remark}

The term {\color{black}$A_1$} in \eqref{eq:decomp:sketch} is bounded by $h^{\gamma}$ by using the property \ref{ass:weights:polynom} of the weights and standard estimates.  The term {\color{black}$A_3$} in \eqref{eq:decomp:sketch} induces the $1/\sqrt n$ rate by using \\$\expec\big[\norm{n^{-1} \sum_{i= 1}^n Z_i(\cdot)Z_i(\cdot) - \Gamma}_\infty\big] = \mc O(n^{-1/2})$ as well as the boundedness of sums of absolute values of the weights. The term {\color{black}$A_2$} involving products of errors is analyzed using the Hanson-Wright inequality and can be bounded by $( \log(h^{-1})/(n\,(p\,h)^2))^{1/2}$, and since $p^{-1} \lesssim h$ is negligible. {\color{black} Intuitively, apart from the logarithmic factor this arises from weighted averaging of order $(p\, h)^2$ terms $n^{-1}\,\sum_{i=1}^n \epsilon_{i,j}\epsilon_{i,k}$,  which themselves are of order $1{\sqrt n}$ and approximately independent.} Finally, the intermediate term {\color{black}$A_4$ which captures the interplay between processes and errors gives the rate $( \log(h^{-1})/(n\,p\,h))^{1/2}$, resulting in the overall bound \eqref{eq:upperboundwithbandwitdh}.  Intuitively, averaging the $Z_i$ gives a $n^{-1/2}$, and the sum over the weights effecting the errors is an average of order $(p\,h)$ terms, resulting in $p\, h)^{-1/2}$. These effect the rate multiplicatively by independence of $Z_i$ and the errors $\epsilon_{i,k}$. } 
A detailed proof is contained in Section \ref{sec:proofs}.

\begin{remark}[Effect of mean function estimation]\label{rem:meanest}
    The mean function $\mu$ cancels out when forming the empirical covariance matrix in \eqref{eqn:estimatorCovariance}. Therefore, formally no assumptions are required on $\mu$, and we could of course include a $\sup_\mu$ in the statements of the upper bounds. This is in contrast to most approaches in the literature which rely on an initial estimate of the mean and forming residuals \citep{zhang2016sparse, li2010uniform, xiao2020asymptotic}. However, this feature seems to be particular to the synchronous design that we consider. If design points are asynchronous, it appears that also estimating the mean cannot be avoided. {\color{black} In Section \ref{sec:asyncdesign} and in Section \ref{sec:asynchronousdesign} of the supplementary appendix we give an extension of our method to this situation. 
    However, the precise effect of mean function estimation on covariance kernel estimation in a minimax sense, similar to nonparametric variance function estimation in nonparametric regression \citep{Wang}, has to the best of our knowledge not been investigated in the literature.  Also note that the observed sample paths $\mu + Z_i$ include the mean, and Hölder-continuity of these paths requires Hölder-continuity of the mean $\mu$.}    
\end{remark}

    

\subsection{Lower Bounds}

Now let us turn to corresponding lower bounds. Intuitively since covariance kernel estimation should be at least as hard as mean function estimation, and since the rate in \eqref{eq:rateofconvcovkernel} corresponds to the optimal rate for mean function estimation in one dimensions $d=1$ \citep{berger2023dense}, optimality of the rates is not surprising. However, the proofs in particular for the intermediate term in \eqref{eq:rateofconvcovkernel} are much more involved than for the mean function.


\begin{theorem}\label{theorem:optimality}
	{\color{black} Assume that in model \eqref{eq:model} the errors $\epsilon_{i,j}$ are i.i.d.~$\mathcal N(0, \sigma_0^2)$ - distributed, $\sigma_0^2 >0$.} Then setting
	$$a_{n,p} = \color{black} \max \Big(p^{-\gamma},  \Big(\frac{\log(n\,p)}{n\,p}\Big)^{\frac \gamma{2\gamma + 1}},  n^{-1/2}\Big)$$
	we have that
	\begin{align*}
		\liminf\limits_{n, p \to \infty} &\inf\limits_{\hat G_{n,p}} \, \sup_{Z \in \mc P_{\mathcal G}(\gamma; L)} a_{n,p}^{-1} \,\expec\, \Big[\normb{\hat G_{n,p} - \Gamma}_\infty\Big] >0\,,
	\end{align*}
	
	where the infimum is taken over all estimators {\color{black} $\hat G_{n,p}$ of $\Gamma$}. 
\end{theorem}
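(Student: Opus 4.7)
The plan is to prove the three terms $p^{-\gamma}$, $r_{n,p} \defeq (\log(np)/(np))^{\gamma/(2\gamma+1)}$, and $n^{-1/2}$ comprising $a_{n,p}$ as separate minimax lower bounds by hypothesis-testing reductions, and then take the maximum. All three constructions I propose live in the rank-one sub-family of $\mc P_{\mc G}(\gamma;L)$ consisting of Gaussian processes $Z_\phi = \xi\,\phi$ with $\xi \sim \mc N(0,1)$ and $\phi\colon[0,1]\to\R$ univariate H\"older smooth of order $\gamma$ with a suitable norm bound. The associated covariance kernel is the outer product $\Gamma_\phi = \phi\otimes\phi$, and the per-curve covariance matrix $\Sigma_\phi = u_\phi u_\phi^\top + \sigma_0^2 I_p$ with $u_\phi = (\phi(x_j))_{j=1}^p$ is a rank-one update of $\sigma_0^2 I_p$; this makes KL divergences between the candidate observation laws explicit through Sherman--Morrison and reduces the bivariate estimation problem for $\Gamma$ to a univariate one for $\phi$.

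\textbf{Bias term and parametric term.} For $p^{-\gamma}$ I take $\phi_0 \equiv 1$ and $\phi_1 = 1 + c\,p^{-\gamma}\,\psi(p(\,\cdot\,-x^\ast))$ with $\psi$ a smooth bump supported strictly between two adjacent design points; then $\phi_1(x_j) = \phi_0(x_j)$ for every $j$, so $\Sigma_0 = \Sigma_1$ and the two observation laws coincide exactly, yet $\|\Gamma_1 - \Gamma_0\|_\infty \asymp p^{-\gamma}$, and a two-point reduction gives the bound. For $n^{-1/2}$ I take $\phi_0 \equiv 1$ and $\phi_1 \equiv 1 + \rho/\sqrt n$; a Sherman--Morrison expansion of $\mathrm{KL}(N(0,\Sigma_0)\,\|\,N(0,\Sigma_1))$ shows that the leading $O(1/\sqrt n)$ terms cancel because the perturbation is aligned with the dominant eigenvector $\mathbf 1$ of $\Sigma_0$, leaving a per-sample KL of order $1/n$ and hence an $n$-sample KL bounded by a constant while $\|\Gamma_1 - \Gamma_0\|_\infty \asymp \rho/\sqrt n$; Le Cam's two-point method gives the conclusion.

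\textbf{Intermediate term, the main obstacle.} I set $h \asymp (\log(np)/(np))^{1/(2\gamma+1)}$, $A = h^\gamma$, and $M \asymp 1/h$, place smooth bumps $g_k(x) = A\,\psi((x-t_k)/h)$, $k=1,\dotsc,M$, on disjoint intervals around $t_k = (2k-1)h$, and let $\phi_k = 1 + g_k$. The kernels $\Gamma_k = \phi_k\otimes\phi_k$ then lie in $\mc H_T(\gamma,L)$ for suitable $L$, and evaluation at $(t_k,t_k)$ gives $\|\Gamma_k - \Gamma_\ell\|_\infty \geq 2A + O(A^2) \gtrsim A$ for $k\neq\ell$. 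The key KL estimate uses the exact rank-one formula
\[
\mathrm{KL}\!\big(N(0,\Sigma_k)\,\big\|\,N(0,\Sigma_\ell)\big) = \frac{\|u_k\|^4 - (u_k \cdot u_\ell)^2}{2\sigma_0^2\,(\sigma_0^2 + \|u_\ell\|^2)}
\]
together with $\|u_k\|^2 \asymp p \gg \sigma_0^2$ (valid in the regime where $r_{n,p}$ is the binding term, i.e.~$p$ not too small) and the disjoint-support identity $\|u_k - u_\ell\|^2 \asymp p\,h\,A^2$; writing $\|u_k\|^4 - (u_k\cdot u_\ell)^2 \approx \|u_k-u_\ell\|^2\,\|u_k\|^2 \asymp p^2 h A^2$ and dividing by the large denominator $\sigma_0^2(\sigma_0^2 + \|u_\ell\|^2) \asymp p\sigma_0^2$ gives a per-sample KL of order $p h A^2/\sigma_0^2$ and an $n$-sample KL of order $n p h A^2/\sigma_0^2$. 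Since $\log M \asymp \log(1/h) \asymp \log(np)$, Fano's inequality succeeds precisely when $n p h A^2 \lesssim \log(np)$, which at $A = h^\gamma$ is equivalent to $n p h^{2\gamma+1} \lesssim \log(np)$, i.e.~to our choice of $h$; the resulting separation is $\gtrsim A = h^\gamma \asymp r_{n,p}$.

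The technical crux is verifying that the rank-one KL indeed operates in its \emph{saturated}, linear-in-$\|u_k-u_\ell\|^2$ regime rather than the naive quadratic regime: the quadratic regime would produce the wrong $(np^2)$ rather than $(np)$ denominator and hence the weaker bivariate-type rate $(\log(np)/(np^2))^{\gamma/(2\gamma+1)}$. One must also control the subleading Sherman--Morrison corrections uniformly across all $\binom{M}{2}$ pairs, which accounts for the paper's remark that this argument is ``much more involved'' than for the mean function. Since the hypothesis families in all three steps lie inside $\mc P_{\mc G}(\gamma;L)$, each lower bound constrains the same infimum, and taking the maximum of the three yields $a_{n,p}$ up to constants, completing the proof.
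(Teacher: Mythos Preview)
Your plan is sound and the three sub-lower-bounds are established correctly; the constructions all sit in the rank-one Gaussian sub-family $\{Z_\phi=\xi\phi\}$ and the Sherman--Morrison algebra you outline does deliver the required Kullback--Leibler order $phA^2$ for the intermediate term, so that Fano with $M\asymp 1/h$ hypotheses yields the target rate $h^\gamma\asymp r_{n,p}$. Two small caveats: the displayed KL formula is exact only when $\|u_k\|_2=\|u_\ell\|_2$, which under Assumption~\ref{ass:designdensity} holds only approximately (the discrete bump norms fluctuate with the local design density $f$); taking the null $\phi_0\equiv 1$ as Fano reference and using the full formula gives $\KL(P_k\|P_0)=\tfrac{p}{p+1}\sum_jg_k(x_j)^2-\tfrac{1}{p+1}(\sum_jg_k(x_j))^2+o(phA^2)\le CphA^2$ directly and avoids the issue. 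For the $n^{-1/2}$ bound your two-point KL computation is fine but slightly more work than needed.

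The paper argues differently on the latter two points. For $n^{-1/2}$ it simply freezes the errors at zero and reduces to variance estimation $Z_i=\sigma W_i$ in an i.i.d.\ normal sample, then invokes monotonicity of the minimax risk under the less informative model with errors; this bypasses any matrix KL computation. For the intermediate term the paper does \emph{not} use Fano with KL at all: instead it applies the likelihood-ratio criterion of \citet[Theorem~2.4]{tsybakov2008introduction}, bounding the tail probability $\prob_l(\dx\prob_0/\dx\prob_l\ge\tau_{n,p})$ via the Hanson--Wright inequality applied to the quadratic form $\sum_iZ_i^\top A Z_i$ with $A=vv^\top-\Sigma_l^{1/2}\tfrac{\ind_{p\times p}}{p+1}\Sigma_l^{1/2}$; this requires explicit eigenvalue computations for the rank-two matrix $A$ (Lemmas in the supplement). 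Your Fano-plus-KL route is noticeably shorter and more transparent, and the authors' remark that ``the Kullback--Leibler divergence leads to a sub-optimal order'' appears to be overly cautious for this particular rank-one construction---the point being exactly what you flag as the crux: because the common component $\mathbf 1$ dominates, the denominator $\sigma_0^2(\sigma_0^2+\|u_\ell\|^2)\asymp p\sigma_0^2$ is large and the KL sits in its saturated linear-in-$\|u_k-u_\ell\|^2$ regime rather than the naive quadratic one. What the paper's approach buys is robustness: Theorem~2.4 works under the weaker tail condition \eqref{eq:condition_thm24tsybakov} and would still apply if, for some alternative construction, the KL happened to overshoot by a logarithmic factor.
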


The proof is provided in Section \ref{ssec:proof:optimality}. 



\subsection{Asymptotic normality}\label{sec:asympnorm}


To derive the asymptotic normality of the estimator \eqref{eqn:estimatorCovariance} we need the following smoothness assumption on the forth moment function. 
%

\begin{assumption}[Forth moment function]\label{ass:asymp_norm}
        The forth moment function 
    \begin{equation}\label{eq:forthmomentoperator}
        R(x,y,s,t) \defeq \expec[Z(x)Z(y)Z(s)Z(t)]-\Gamma(x, y)\Gamma(s, t), \quad x,y,s,t \in [0,1]\,,
    \end{equation}    
        of the process $Z$ is Hölder smooth of some positive order: $R \in \mc H_{[0,1]^4}(\zeta, \tilde L)$ for some $ 0<\zeta \leq 1$ and $\tilde L > 0$. 
%
\end{assumption}

\begin{theorem}\label{thm:asymp_norm}
    In model \eqref{eq:model} under Assumptions \ref{ass:distribution} and \ref{ass:asymp_norm}, consider the linear estimator in (\ref{eqn:estimatorCovariance}) with weights satisfying Assumption \ref{ass:weights} with $\zeta = \floor{\gamma}$. Further suppose that for some $\delta > 0$ we have that $p \gtrsim n^{1/(2\gamma)} \log(n)^{1+2\delta}$. Then for all sequences of smoothing parameters $h_n$ for which 
    \begin{align*}
       h_n \in H_n & \defeq \big[ \log(n)^{1+\delta}/p\,,\,n^{-1/(2\gamma)}\log(n)^{-\delta} \big], \qquad n \in \N,
    \end{align*}
    it holds that
    \begin{equation}
        \sqrt{n}\, \big( \,\hat \Gamma_{n}(\cdot; h_n) - \Gamma\,) \;\stackrel{D}\rightarrow \;\mc G(\,0, R), 
    \end{equation}
    where $\mc G$ is a real-valued Gaussian process on $[0,1]^2$ with covariance operator $R$ given in \eqref{eq:forthmomentoperator}.
\end{theorem}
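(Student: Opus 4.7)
The plan is to reduce the claim to a Banach-space CLT for an infeasible oracle estimator, then show that the gap between the oracle and $\covest xyh$ is uniformly of order $o_\mathbb{P}(n^{-1/2})$ over the bandwidth window $H_n$. Throughout, I use $V_i(x,y)\defeq Z_i(x)Z_i(y)-\Gamma(x,y)$ and the oracle $\tilde\Gamma_n(x,y)\defeq n^{-1}\sum_{i=1}^n Z_i(x)Z_i(y)$.

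\textbf{Step 1 (Reduction via the error decomposition).} Starting from Lemma \ref{lem:cov_error_decomp} (sketched in Remark \ref{rem:errordecomp}), I split
\begin{align*}
\sqrt n\,\bigl(\covest xyh-\Gamma(x,y)\bigr)
= \sqrt n\,\bigl(\tilde\Gamma_n(x,y)-\Gamma(x,y)\bigr) + \sqrt n\,S_n(x,y;h) + \sqrt n\,R_n(x,y;h),
\end{align*}
where $S_n(x,y;h)=\sum_{j<k}\wjk xyh\,\tfrac1n\sum_i V_i(x_j,x_k)-\tfrac1n\sum_i V_i(x,y)$ is the centered smoothing residual coming from the pure process–process contribution, and $R_n$ collects everything else: the deterministic bias $\sum_{j<k}\wjk xyh(\Gamma(x_j,x_k)-\Gamma(x,y))$, the $\epsilon\epsilon$ and $Z\epsilon$ cross terms, and the residual terms produced by subtracting $\bar Y_{n,j}\bar Y_{n,k}$.

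\textbf{Step 2 (Negligible terms $R_n$).} I argue $\sqrt n\,\normb{R_n(\cdot;h)}_\infty=o_\mathbb{P}(1)$, uniformly in $h\in H_n$, by strengthening the sup-norm estimates that drive the proof of Theorem \ref{thm:rates_cov_estimation}. The deterministic bias is controlled by \ref{ass:weights:polynom}–\ref{ass:weights:vanish} at order $h^\gamma$, which under $h\le n^{-1/(2\gamma)}\log(n)^{-\delta}$ is $o(n^{-1/2})$; together with $p^{-\gamma}=o(n^{-1/2})$ under $p\gtrsim n^{1/(2\gamma)}\log(n)^{1+2\delta}$ this handles discretization. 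The $Z\epsilon$ term is $\mathcal O_\mathbb{P}((\log(h^{-1})/(nph))^{1/2})$, which under $ph\gtrsim\log(n)^{1+\delta}$ becomes $\mathcal O_\mathbb{P}(n^{-1/2}\log(n)^{-\delta/2})=o_\mathbb{P}(n^{-1/2})$. The $\epsilon\epsilon$ term is $\mathcal O_\mathbb{P}((\log(h^{-1})/(n(ph)^2))^{1/2})$ by Hanson–Wright, which is even smaller. The $\bar Y\bar Y$ contributions factor into products of $\mathcal O_\mathbb{P}(n^{-1/2})$ averages and are thus $\mathcal O_\mathbb{P}(n^{-1})$.

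\textbf{Step 3 (Smoothing residual $S_n$).} Here Assumption \ref{ass:asymp_norm} enters. For fixed $(x,y)$, since $\wjk xyh$ is supported on $\{|x_j-x|,|x_k-y|\le h\}$ and $R$ is Hölder of order $\zeta$, a direct variance computation yields
\begin{align*}
\var\!\Bigl(\sum_{j<k}\wjk xyh\,V(x_j,x_k)-V(x,y)\Bigr)\le C\,h^{\zeta},
\end{align*}
so $\expec[S_n(x,y;h)^2]\le C h^{\zeta}/n$. Combined with the weight Lipschitz estimate \ref{ass:weights:lipschitz} and the Hölder continuity of paths (\ref{eq:hoeldercontpathsZ}, which yields a stochastic envelope with four finite moments), a standard chaining/maximal-inequality argument over the $2$-dimensional index set $T$ gives
\begin{align*}
\sqrt n\,\normb{S_n(\cdot;h)}_\infty=\mathcal O_\mathbb{P}\!\bigl(h^{\zeta/2}\sqrt{\log n}\bigr)=o_\mathbb{P}(1),
\end{align*}
uniformly for $h\in H_n$. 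The bandwidth range is tuned precisely so that both this residual and the terms of Step 2 vanish.

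\textbf{Step 4 (CLT for the oracle).} It remains to prove $\sqrt n\,(\tilde\Gamma_n-\Gamma)\stackrel{D}\to\mathcal G(0,R)$ in $C(T)$. Finite-dimensional convergence at any $(x_1,y_1),\dots,(x_q,y_q)$ is the classical multivariate CLT for the i.i.d.\ vectors $(V_i(x_\ell,y_\ell))_{\ell=1}^q$ with covariance matrix $(R(x_\ell,y_\ell,x_m,y_m))_{\ell,m}$. For asymptotic tightness, Hölder smoothness of $R$ of order $\zeta$ gives
\begin{align*}
\expec\bigl[(V_i(x_1,y_1)-V_i(x_2,y_2))^2\bigr]\le 2\tilde L\,\norm{(x_1,y_1)-(x_2,y_2)}_\infty^{\zeta},
\end{align*}
so the intrinsic pseudo-metric on $T$ satisfies $d_R\lesssim\norm{\cdot}^{\zeta/2}$. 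Because $T\subset[0,1]^2$ is totally bounded under this pseudo-metric with finite entropy integral, and because the summands $V_i$ have four finite moments by the assumption $\expec[Z(0)^4]+\expec[M^4]<\infty$, a standard empirical-process CLT (e.g.\ Jain–Marcus or van der Vaart–Wellner) yields tightness and hence the desired convergence to $\mathcal G(0,R)$.

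The main obstacle is Step 3: establishing the $o_\mathbb{P}(n^{-1/2})$ bound on $\normb{S_n}_\infty$ at the prescribed uniform-in-$h$ level. The pointwise variance bound is straightforward, but turning it into a sup-norm statement requires careful chaining that simultaneously exploits the weight-Lipschitz property \ref{ass:weights:lipschitz}, the Hölder smoothness of $R$, and the path-Hölder envelope, while avoiding spurious logarithmic factors at the boundaries of $H_n$.
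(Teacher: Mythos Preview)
Your approach is correct in spirit but takes a genuinely different route from the paper. The paper does \emph{not} pass through an oracle estimator; instead it applies Pollard's functional CLT \citep[Theorem~10.6]{pollard1990empirical} directly to the triangular array
\[
X_{n,i}(x,y)=\frac{1}{\sqrt n}\sum_{j<k}\wjk xyh\,\bigl(Z_i(x_j)Z_i(x_k)-\Gamma(x_j,x_k)\bigr),
\]
checking manageability (via path H\"older continuity and the weight properties \ref{ass:weights:lipschitz}, \ref{ass:weights:sum}), covariance convergence $\expec[S_n(x,y)S_n(s,t)]\to R(x,y,s,t)$ (via the H\"older smoothness of $R$ from Assumption~\ref{ass:asymp_norm}), and the envelope and semi-metric conditions. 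The remaining error terms (bias, $\epsilon\epsilon$, $Z\epsilon$, $\bar Y\bar Y$) are then $o_\prob(n^{-1/2})$ by the rates in Lemma~\ref{lem:rates:convergence} under $h\in H_n$, exactly as in your Step~2. So both proofs share Step~2, but the paper replaces your Steps~3 and~4 by a single FCLT for the smoothed array.

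Your decomposition is conceptually cleaner---the limit comes from the oracle and everything else is nuisance---but the price is Step~3, and the argument you sketch there is slightly muddled. The pointwise variance bound $\var(D_i(x,y))=O(h^\zeta)$ is fine, but ``standard chaining'' to $O_\prob(h^{\zeta/2}\sqrt{\log n})$ would require sub-Gaussian increments, which the $D_i$ do not have (only second moments are controlled). The fix you hint at---a pathwise envelope via \eqref{eq:hoeldercontpathsZ}---actually gives a different and more robust bound: since $|D_i(x,y)|\le \Csum\sup_{|x_j-x|,|x_k-y|\le h}|V_i(x_j,x_k)-V_i(x,y)|\le C(|Z_i(0)|+M_i)M_i\,h^\beta+Lh^{\min(\gamma,1)}$, manageability plus Pollard's maximal inequality yield $\sqrt n\,\|S_n\|_\infty=O_\prob(h^{\beta})=o_\prob(1)$, without invoking Assumption~\ref{ass:asymp_norm} at all in this step. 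With that correction your route goes through; the paper's approach simply sidesteps the issue by never separating the oracle from the smoothed array.
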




The proof is deferred to Section \ref{ssec:proof_asymp_norm} in the supplementary appendix. 

\begin{remark}
    The asymptotic covariance operator $R$ is as in the case with continuous and error-free observations, see e.g.~\citet[Theorem 2.1]{dette2020functional}.
\end{remark}





{\color{black}
\section{Extensions to derivative estimation and asynchronous designs}\label{sec:extensions}
\subsection{Derivative estimation}

Local polynomial estimators also yield estimates of derivatives. Derivative estimation of the mean function in model \eqref{eq:model} is discussed in \citet{berger2025smooth}.

Suppose that $\Gamma \in  \mc H_{T}(\gamma, L)$, $ \bs s=(s_1,s_2) \in \N_0^2$ with $\abs{\bs s} \leq \lfloor \gamma \rfloor $, and we aim to estimate the partial derivative $D^{\bs s}$ of order $\bs s$ of $\Gamma$ when restricted to the upper triangle $T$. 
To this end, linear estimators can be set up analogously to \eqref{eqn:estimatorCovariance} as
\begin{equation}
	\hat \Gamma_{n,p}^{(\bs s)}(x,y;h) \defeq \sum_{j<k}^p
	\wjks xyhs \, z_{j,k;n}, \quad (x,y) \in T\,,\label{eqn:estimatorCovariancederi}
\end{equation}
where the weights now reproduce the partial derivative of bivariate polynomials of order $\bs s$. A particular estimator arises from the restricted bivariate local polynomials in Example \ref{ex:localpolynomial} of order at least $\abs{\bs s}$, where the coordinate corresponding to ${\bs s}$ in $\hat \vartheta(x,y)$ is chosen and multiplied by $h^{- \abs{\bs s}}$ to yield the estimator. Precise assumptions and upper bounds for the rates of convergence are presented in the supplementary appendix, Section \ref{sec:derivativeestimation}, while proofs can be found in \citet{berger2025diss}.  
 
As applications of derivative estimation of $\Gamma$, if the process $Z$ has continuously differentiable sample paths, then under some additional regularity (e.g.~Gaussianity) the derivative process $Z^\prime$ has covariance kernel $\partial_1 \partial_2 \Gamma$. See also \citet{dai2018} for a discussion relating to the principle component basis functions of the  covariance kernel $\partial_1 \partial_2 \Gamma$ of $Z^\prime$. In particular, the variance function of $Z^\prime$ is given by $\var(Z^\prime(x)) = \partial_1 \partial_2 \Gamma(x,x)$, which occurs e.g.~in the construction of uniform confidence bands based on the Kac-Rice formula \citep{liebl2019fast}. 

Moreover, estimates of the partial derivatives of $\Gamma$ can be used to assess differentiability of the sample paths of $Z$ as follows. Suppose that $Z$ is a-priori only assumed to be Hölder-continuous as in \eqref{eq:hoeldercontpathsZ}, and that $\Gamma \in \mc H_{T}(\gamma, L)$ with $\gamma >1$. Then the partial derivatives $\partial_1 \Gamma$ and $\partial_2 \Gamma$ of $\Gamma$ restricted to $T$ can be estimated using \eqref{eqn:estimatorCovariancederi}. Now if $\Gamma$ is actually smooth also on the diagonal, so that $\Gamma \in \mc H_{[0,1]^2}(\gamma, L)$, then using the symmetry of $\Gamma $ it can be checked that $\partial_1 \Gamma(x,x) = \partial_2 \Gamma(x,x)$.  Therefore, comparing the restricted estimates $\hat \Gamma_{n,p}^{(1,0)}(x,x;h)$ and $\hat \Gamma_{n,p}^{(0,1)}(x,x;h)$ allows to check whether a covariance kernel $\Gamma \in \mc H_{T}(\gamma, L)$ is globally differentiable. But differentiability of the sample paths of $Z$ implies (under some regularity) global differentiability of the covariance kernel, so this also gives a method to assess the former property. For further discussion and an application see \citet[Sections 2.3, 3.2]{berger2025smooth}.

\subsection{Asynchronous designs}\label{sec:asyncdesign}
We briefly discuss how our estimation approach may be extended to an asynchronous design, in which the non-random design points 
$x_{i,1} < \ldots < x_{i,p}$ may differ for each row $i$. For notational simplicity we assume the same number $p$ in each row, but extensions to $p_i$ depending on $i$ would be possible.  
As an estimator for $\Gamma$ we consider an average of row-wise estimates centered at the estimated mean,
\begin{align}
    \hat \Gamma^{\mathrm{a}}_{n,p} (x,y;h) & = \frac{1}{n} \sum_{i = 1}^n \sum_{j <k}^p w_{i;j,k}(x,y;h) \hat z_{j,k;i}, \qquad \hat z_{j,k;i} = \big( Y_{i,j}- \hat \mu(x_{i,j})\big) \big( Y_{i,k}- \hat \mu(x_{i,k})\big)\,.\label{eq:covestasy}
\end{align}
Here $\hat \mu$ is some preliminary estimate of the mean, as e.g.~discussed in \citet[Section 7]{berger2023dense}, and the weights
$w_{i;j,k}(x,y;h) = w_{i;j,k}(x,y;x_{i,1} , \ldots , x_{i,p}, h)$ are assumed to satisfy the conditions of Assumption \ref{ass:weights} with constants that are uniform in $i$. These may be obtained by applying the restricted local polynomial approach of Example \ref{ex:localpolynomial} separately for each $i$, with $z_{j,k;n}$ replaced by $\hat z_{j,k;i}$.
In the supplementary appendix, Section \ref{sec:asynchronousdesign}, we analyze this estimator, and in particular show that if $\hat \mu$ is formed independently of the data $Y_{i,j}$ used in  \eqref{eq:covestasy} (say by sample splitting, which does not effect the rates but of course the constants), then we retain the upper bound $a_{n,p,h}$ in \eqref{eq:upperboundwithbandwitdh} for the sup-norm distance with an additional term $\|\hat \mu - \mu\|_\infty^2$. Thus the estimation effect of the mean only effects the upper bound quadratically, and therefore is negligible except in the relatively sparse case with small $p$ compared to $n$, and a mean function which is much rougher than the covariance kernel.

}
\section{Simulations and real-data illustration}\label{sec:simapp}

In this section we present simulation results for our methods and give a real-data application. First in Section \ref{sec:sims} we illustrate the finite sample effect of the choice of the bandwidth and propose and investigate a cross validation procedure to select a bandwidth. Further we simulate  the size in the sup-norm of the terms in the error decomposition in \eqref{eq:decomp:sketch} which the rate in Theorem \ref{thm:rates_cov_estimation} relies on. Finally we  compare the proposed estimator which uses only the empirical covariances above the diagonal to a more conventional bivariate local polynomial estimator which still leaves out the diagonal terms to reduce variability resulting from the errors but otherwise smooths over the diagonal.  
In Section \ref{sec:realdata} we give an illustration to  daily temperature curves in Nuremberg, in which we show how standard deviation and correlation functions resulting from our estimate of the covariance vary over the year. 
The \texttt{R}-code regarding the simulations and the real data examplae can be found in the Github repository \href{https://github.com/mbrgr/Optimal-Rates-Covariance-Kernel-Estimation-in-FDA.git}{\texttt{mbrgr/Optimal-Rates-Covariance-Estimation-in-FDA}}. The implementation of the calculation of the weights of the bivariate local polynomial estimator and the estimator itself can be found in the \texttt{biLocPol} package, which is also available on Github in the repository \href{https://github.com/mbrgr/biLocPol.git}{mbrgr/biLocPol}.

\subsection{Simulations}\label{sec:sims}

\begin{figure}[b!]
    \begin{subfigure}{0.45\linewidth}
        \includegraphics[width=\linewidth]{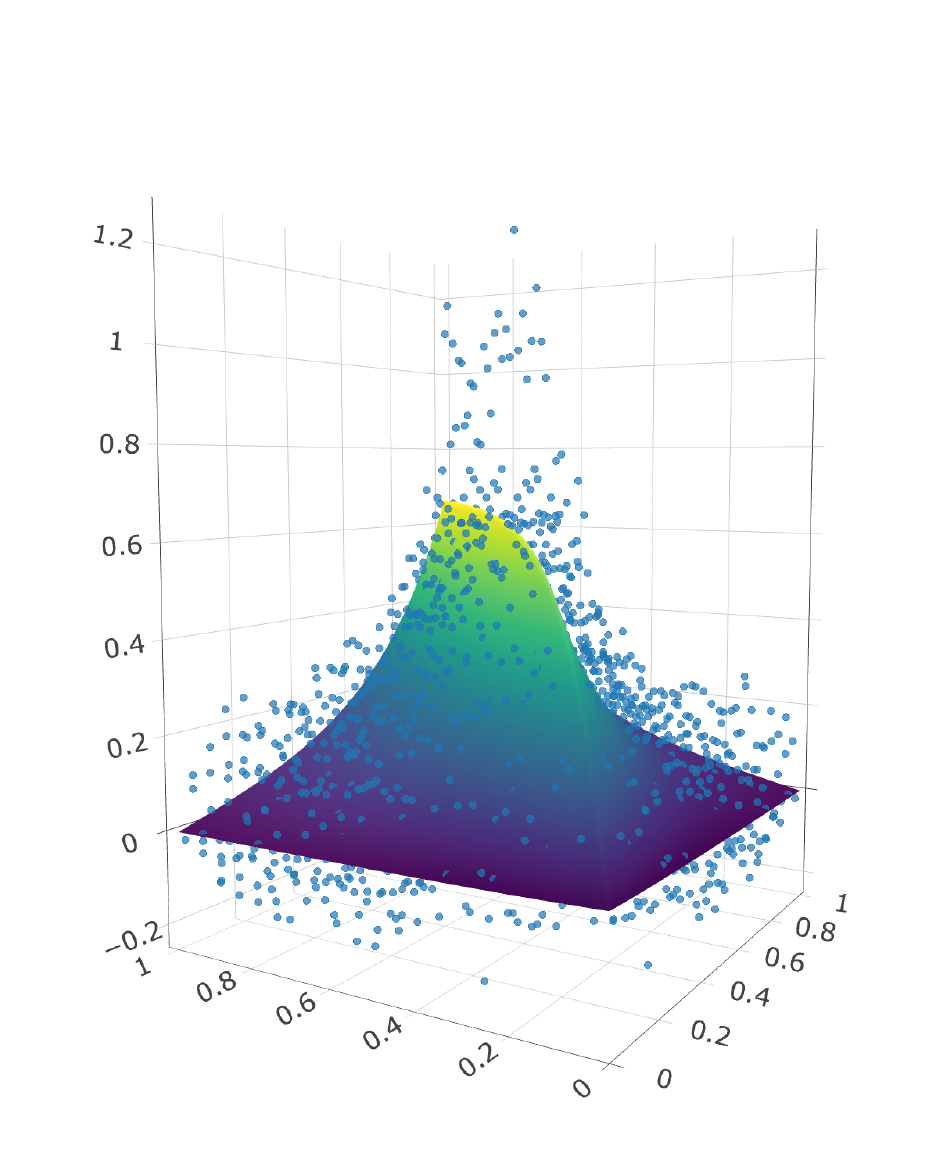}
        \caption{\small \href{https://mbrgr.github.io/from-dense-to-sparse-fda/figure31.html}{Front view.} }
		\label{fig:OU_observations_a}
    \end{subfigure}
\hfill
    \begin{subfigure}{0.45\linewidth}
        \includegraphics[width=\linewidth]{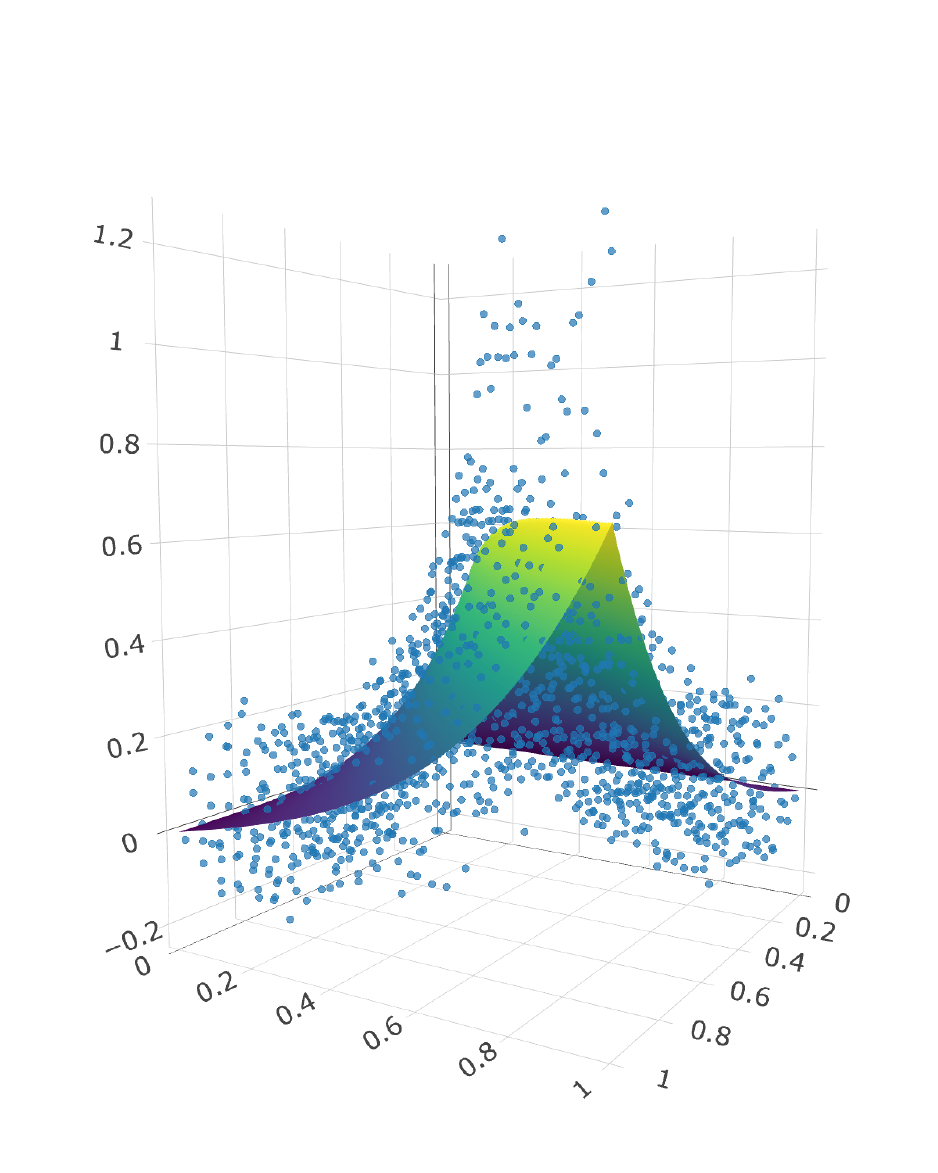}
		\label{fig:OU_observations_b}
        \caption{\small Back view.}
    \end{subfigure}%
    \caption{Covariance kernel $\Gamma_{\mathrm{OU}}$ in \eqref{eq:covkerOU} with parameters $\theta = 3$ and $\sigma = 2$. The scatter plot shows $(x_j,x_k,z_{j,k})$ with $p = 40$  and $n = 100$, and normally distributed errors with standard deviation $0.75$. The influence of the additional error variance is clearly visible on the diagonal of the covariance kernel.}
    \label{fig:OU_observations}
\end{figure}

For most of the simulations we consider the Ornstein-Uhlenbeck process
\begin{align*}
    Z_t & = \sigma \,\int_0^t \exp(- \theta\,(t - s)) \,\dx B_s\,
\end{align*}
with parameters $\theta = 3$ and $\sigma = 2$, and where $(B_s)_{s\geq 0}$ is a standard Brownian motion. It has covariance kernel given by 
\begin{align}\label{eq:covkerOU}
    \Gamma_{\mathrm{OU}}(s,t) & = \frac{\sigma^2}{2\,\theta}\,\big(\exp(-\theta\,\abs{t - s}) - \exp(-\theta\,(s + t))\big)\,,
\end{align}
which has a kink on the diagonal. The mean function $\mu$ in model \eqref{eq:model} is set to $0$ since it is ancillary in forming the estimator, and the errors $\epsilon_{i,j}$ are centered, normally distributed with standard deviation $\sigma_\epsilon = 0.75$. The grid points are equidistant at $x_j = (j-1/2)/p$, $j=1, \ldots, p$.





Figure \ref{fig:OU_observations} contains two plots of the covariance kernel in \eqref{eq:covkerOU} together with a scatter plot $(x_j,x_k,z_{j,k})$ of the empirical covariances  $z_{j,k} = (n-1)^{-1} \sum_{ i = 1}^n (Y_{i,j}Y_{i,k} - \bar Y_{j}\bar Y_k), 1 \leq j, k \leq p$ of the observations, for a particular sample with $p=40$ and $n=100$. One observes that the covariance kernel \eqref{eq:covkerOU} is not smooth on the diagonal, and that the empirical covariances deviate strongly from the underlying covariance kernel on the diagonal due to the additional variance from the observation errors $\epsilon_{i,j}$. For this sample, our variant of the local polynomial estimator \eqref{eqn:minimisationProblemLocPol}  of order $m=1$ (local linear) with bandwidth $h=0.3$ is displayed in Figure \ref{fig:plot_comp_OU_a} together with the true underlying covariance kernel.  For comparison in Figure \ref{fig:plot_comp_OU_b} we display the result for the estimator 
\begin{align}
    \hat \Gamma^{\neq}_n (x,y;h) \defeq \frac1{n-1}\sum_{i=1}^n \sum_{j\neq k}^p
    \wjk xyh \big(Y_{i,j} Y_{i,k} - \bar Y_{n, j} \bar Y_{n, k}\big)\,,\label{eqn:estimatorCovariance_full}
\end{align}
with local polynomial weights, which only excludes empirical variances but otherwise smooths over the diagonal. We again use $m=1$ (local linear) and $h=0.2$. This estimator displays a substantial bias due to the kink of the true covariance kernel along the diagonal. 



\begin{figure}[t!]
    \begin{subfigure}{0.49\linewidth}
        \includegraphics[width=\linewidth]{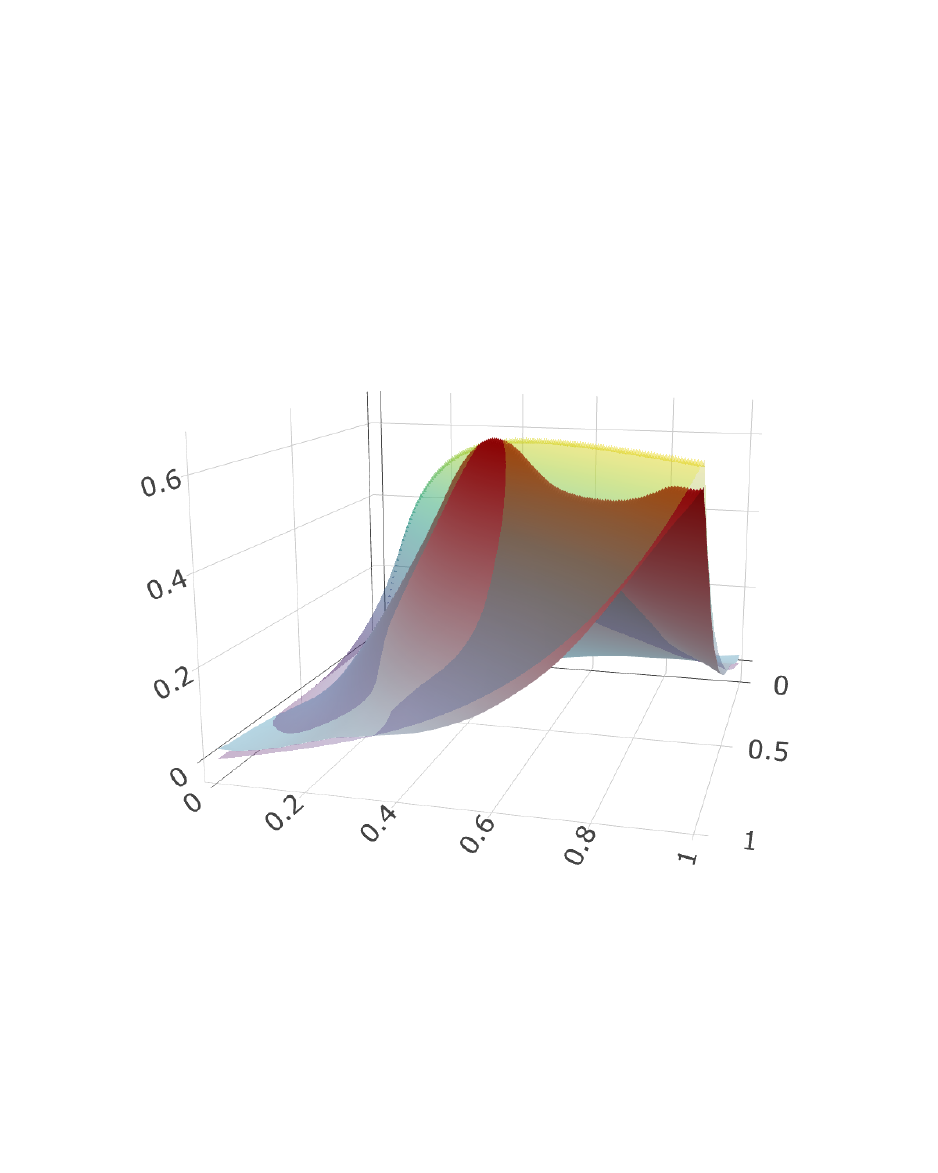}
        \caption{\small \href{https://mbrgr.github.io/from-dense-to-sparse-fda/figure32a.html}{Estimation} with $\hat \Gamma_{100, 40}(\cdot; 0.3)$.}
		\label{fig:plot_comp_OU_a}
    \end{subfigure}
\hfill
    \begin{subfigure}{0.49\linewidth}
        \includegraphics[width=\linewidth]{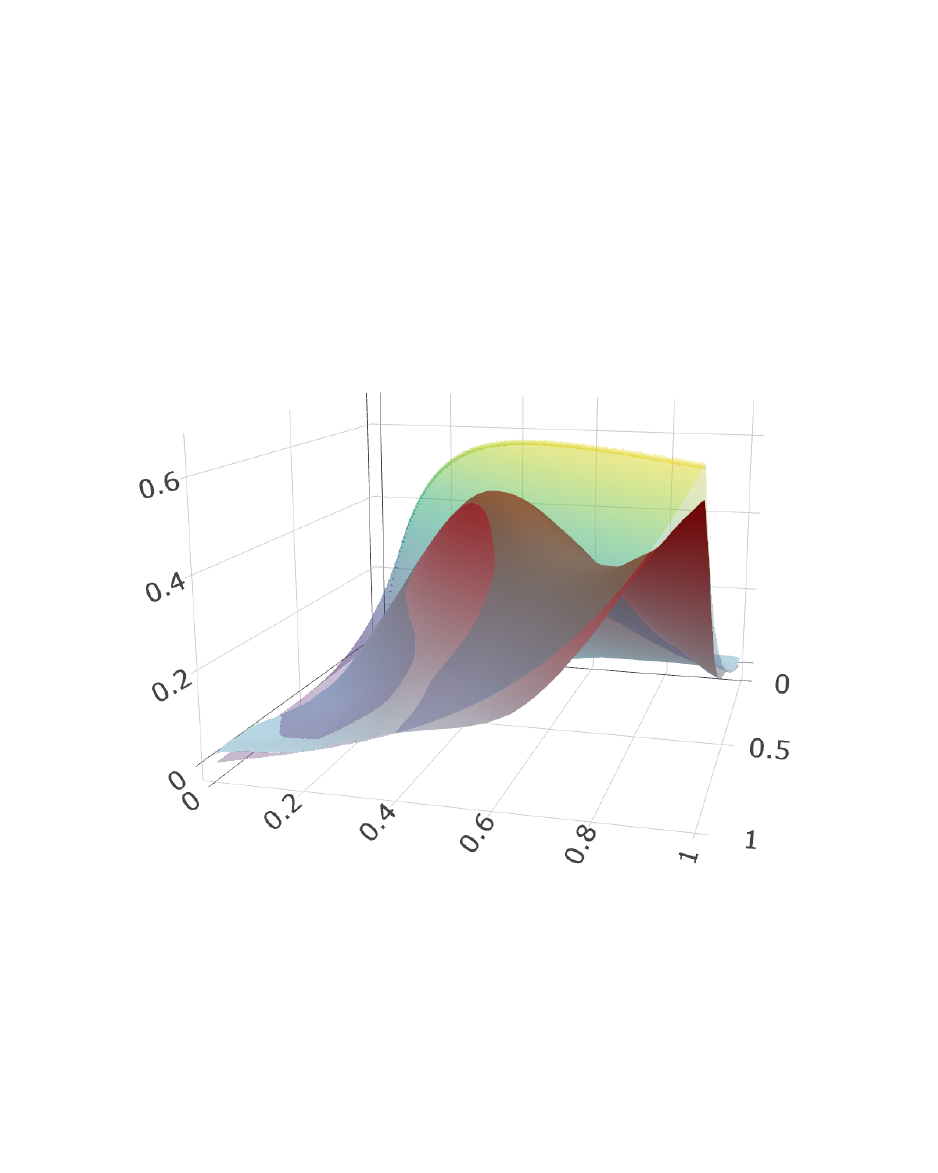}
        \caption{\small \href{https://mbrgr.github.io/from-dense-to-sparse-fda/figure32a.html}{Estimation} with $\hat \Gamma_{100, 40}^{\neq}(\cdot; 0.2)$.}
        \label{fig:plot_comp_OU_b}
    \end{subfigure}%
    \caption{\small Comparison of the estimators (blue to red plane) $\hat \Gamma_{100, 40}(\cdot; 0.3)$ and $\hat \Gamma_{100, 40}^{\neq}(\cdot; 0.2)$, both with local linear weights, of the covariance kernel $\Gamma_{\mathrm{OU}}$ (green) based on the observations of Figure \ref{fig:OU_observations}.}
    \label{fig:plot_comp}
\end{figure}

\subsubsection*{Bandwidth selection}

First we investigate the effect of bandwidth selection on the performance of our variant \eqref{eqn:minimisationProblemLocPol} of the local polynomial estimator, where we restrict ourselves to order $m=1$, for sample size $n=400$ and 
$p \in \{15, 25, 50, 75, 100\}$. For each value of $p$ we use $N = 1000$ repetitions and calculate the supremum norm error of $\hat \Gamma_{n,p}$ for $h$ varying over a grid of bandwidths up to $1$. The results are displayed as curves in $h$ in Figure \ref{fig:bw_comp_OU}. Similar results for $n=50, n = 100$ and $n=200$ are given in the supplementary material \cite[Figure 2]{berger2024covsupplementary}.  
One observes that smoothing is an important part of estimation but should not be overdone. In our setting, for $n=400$ the bandwidths that lead to the smallest overall error become slightly smaller with increasing $p$ and are always between $0.2$ and $0.4$. 

\begin{figure}[t!]
	\begin{minipage}{.48\linewidth}
		\centering
		\includegraphics[width=\linewidth]{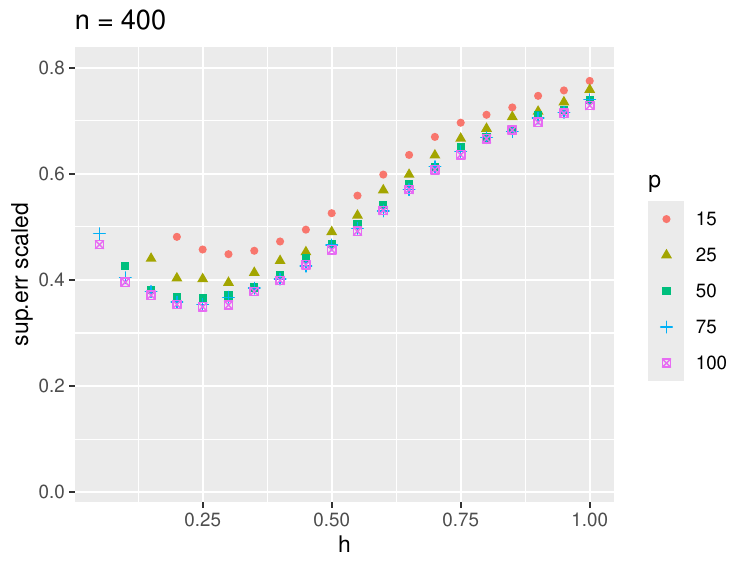}
		\caption{\small {\color{black} Scaled supremum norm error $\norm{\hat\Gamma_{n,p} - \Gamma_{\mathrm{OU}}}_\infty/\norm{\Gamma^{\mathrm{OU}}}_\infty$} of our estimator with $m=1$ for estimating the covariance kernel of the OU-process \eqref{eq:covkerOU} with different bandwidths $h$. }
		\label{fig:bw_comp_OU}
	\end{minipage}
\hspace{0.2cm}
	\begin{minipage}{.48\linewidth}
		\centering
		\includegraphics[width=\linewidth]{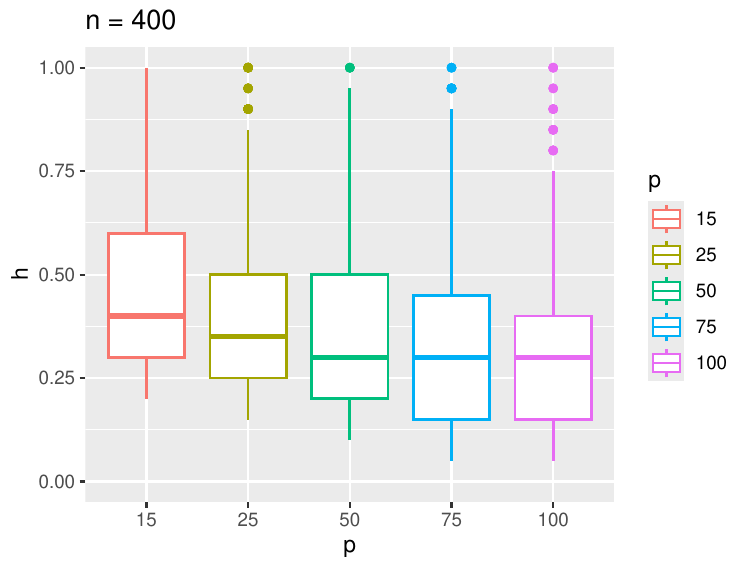}
		\caption{\small \color{black}Boxplots of the selected bandwidths with the five-fold cross validation procedure in the  simulation setup of Figure \ref{fig:bw_comp_OU}.\\\phantom{h}}
		\label{fig:5fold_boxplot}
	\end{minipage}
\end{figure}

Next we investigate a $K$-fold cross validation procedure for selecting $h$.  We proceed as follows: The $n$ observed curves are split randomly into $K$ groups of approximately the same size. One of the groups is used as test data from which  we calculate the empirical covariance matrix $(Z_{jk}^{\mathrm{test}, r})_{j,k = 1, \ldots, p}$. The empirical covariance matrix $(Z^{\mathrm{train}, -r}_{jk})_{j,k = 1,\ldots, p}, r = 1,\ldots, K,$ based on the remaining $K-1$ groups will be used as input  data to our estimator. The procedure requires a grid of bandwidths $h_l$, $l = 1, \ldots, m$ and for every bandwidth $h_l$ we evaluate the local polynomial estimator in \eqref{eqn:minimisationProblemLocPol} $K$-times with each group once as the test set. We take the mean of the $K$ sup norm errors for each bandwidth $h_l$,
\begin{align*}
    \mathrm{CV}(h_l) & = \frac1K \sum_{r = 1}^K \max_{ 1 \leq j < k \leq  p} \absb{\hat \Gamma_{n,p}(x_j, x_k; h_l, Z_{j,k}^{\mathrm{train}, -r}) - Z_{j,k}^{\mathrm{test}, r}}\,.
\end{align*}
Finally we choose the bandwidth with the minimal average sup-norm error, 
\begin{align*}
    h^{\mathrm{cv}} = \argmin\big\{\mathrm{CV} (h_l)\mid h_l, l = 1,\ldots,m\big\}\,.
\end{align*}
We repeat this procedure $N = 1000$ times for $n = 400$ and each $p \in \{15, 25, 50, 75, 100\}$. The results are displayed in Figure \ref{fig:5fold_boxplot}. 
Overall, when compared to the optimal bandwidths visible in Figure \ref{fig:bw_comp_OU}, the cross-validation procedure chooses reasonable but somewhat large bandwidths, which may reflect the additional variability from the estimate in the test data.  


\subsubsection*{Contribution of terms in the error decomposition}

Next we empirically examine the order that the various terms in the error decomposition \eqref{eq:decomp:sketch} have in the supremum norm. Again for each combination of $n \in \{100, 200, 400\}$ and $p \in \{15, 25 ,50,100\}$ we simulate $N = 1000$ repetitions, where we use optimal bandwidths $h_{n,p}$ for the overall supremum-norm error determined by a grid search. 
The results are displayed in Figure \ref{fig:error_decomposition}. Overall, the main contribution to the sup-norm error is from the term involving the processes $Z_i$, which of course decays in $n$ but is not sensitive to $p$. The other terms also decrease somewhat with increasing $p$. Note that individual errors do not add up to but exceed the overall error. {\color{black} This is plausible since they correspond to individual terms in an upper bound for the overall sup-norm error that arises from \eqref{eq:decomp:sketch} by applying the triangle inequality.} 


\begin{figure}
    \centering
    \includegraphics[width = \linewidth]{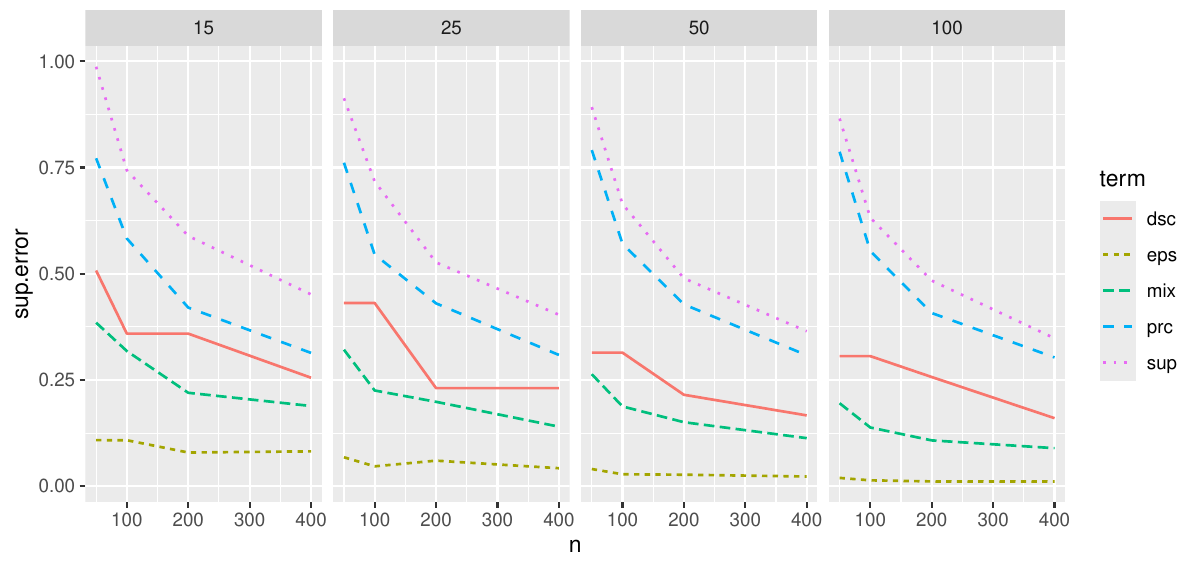}
    \caption{\small Error Decomposition of {\color{black}$\norm{\hat \Gamma_n - \Gamma^{\mathrm{OU}}}_\infty/\norm{\Gamma^{\mathrm{OU}}}_\infty$} where  
    \texttt{dsc} is \eqref{eq:decomp:bias}, \texttt{eps} is \eqref{eq:decomp:eps2}, \texttt{mix} is \eqref{eq:decomp:epsZ}, \texttt{prc} is \eqref{eq:decomp:stochasticError} and \texttt{sup} is the overall supremum error of the estimator. The terms in \eqref{eq:decomp:independendTerms} are not included.}   \label{fig:error_decomposition}  
\end{figure}


\subsubsection*{Comparison of $\hat \Gamma_n$ and   $\hat \Gamma^{\neq}_n$ in \eqref{eqn:estimatorCovariance_full}}

\begin{figure}[t!]
    \begin{subfigure}{0.45\linewidth}
        \includegraphics[width = \linewidth]{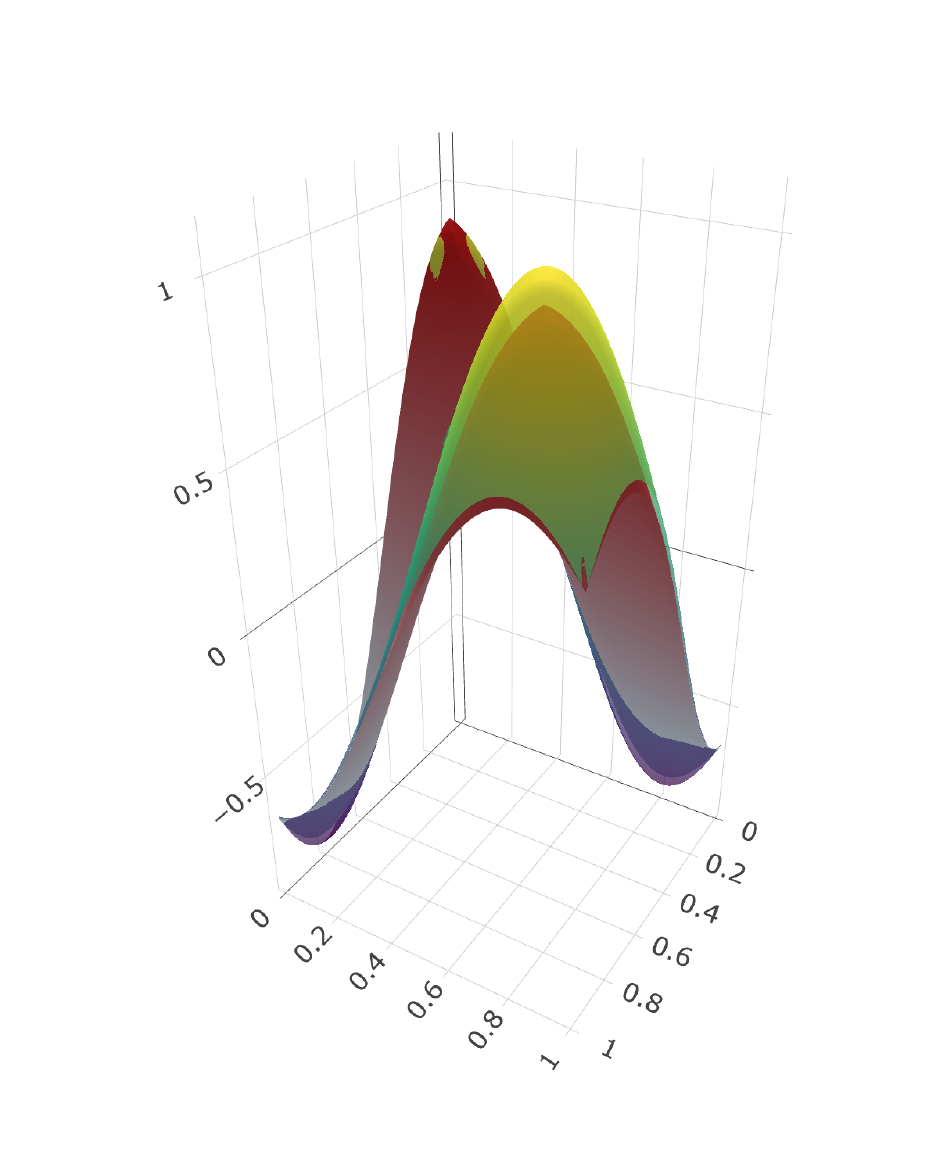}
		\caption{\small\href{https://mbrgr.github.io/from-dense-to-sparse-fda/figure37a.html}{Estimation} with $\hat \Gamma_{100, 40}(\cdot; 0.2)$.}
    \label{fig:plot_comp_2rv_a}
    \end{subfigure}
\hfill
    \begin{subfigure}{0.45\linewidth}
        \includegraphics[width = \linewidth]{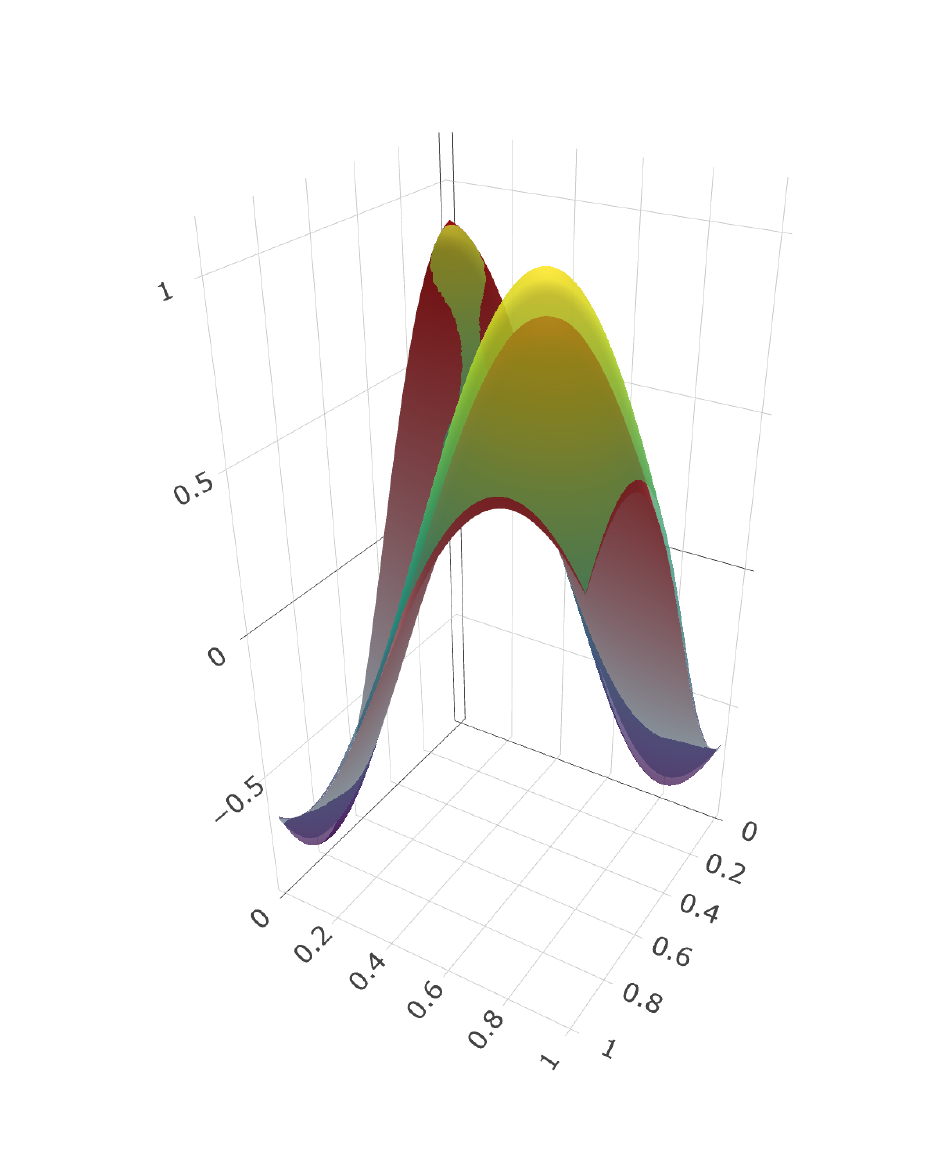}
		\caption{\small \href{https://mbrgr.github.io/from-dense-to-sparse-fda/figure37b.html}{Estimation} with $\hat \Gamma_{100,40}^{\not =}(\cdot; 0.2)$.}
    \label{fig:plot_comp_2rv_b}
    \end{subfigure}%
    \caption{\small {\color{black} Estimates (red) and the true covariance kernel $\tilde \Gamma$ (green / yellow).} For $\hat \Gamma_{100, 40}(\cdot; 0.2)$ a small artificial kink appears on the diagonal of the estimator. The overall estimation quality is about equal. }
    \label{fig:plot_comp_2rv}
\end{figure}

Finally let us compare our estimator $\hat \Gamma_n$ in more detail with the estimator $\hat \Gamma^{\neq}_n$ in \eqref{eqn:estimatorCovariance_full} which uses smoothing over the diagonal. 
%
In addition to the  Ornstein-Uhlenbeck process which has a kink on the diagonal we also consider simulations from the following process 
\begin{align*}
    \tilde Z(x) & \defeq \frac23 \,N_1 \,\sin(\pi\,x) + \sqrt 2\,\frac{2}{3}\,N_2\,\cos(5\,\pi\,x/5)\,,
\end{align*}
where $N_1$ and $N_2$ are independent standard normal distributed random variables. $\tilde Z$ has a smooth covariance kernel  given by
\begin{align*}
    \tilde \Gamma(x,y) = \frac49 \,\sin(\pi\,x) \,\sin(\pi\,y) + \frac89 \,\cos(4\,\pi\,x/5)\,\cos(4\,\pi\,y/5)\,.
\end{align*}



Figure \ref{fig:plot_comp_2rv} displays estimates of the kernel $\tilde \Gamma$ for a particular sample with $n=100$ and $p=40$ using $\hat \Gamma_{100, 40}(\cdot;0.2)$ (Figure \ref{fig:plot_comp_2rv_a})  as well as using $\hat \Gamma_{100, 40}^{\neq}(\cdot;0.2)$ (Figure \ref{fig:plot_comp_2rv_b}), both with local linear weights. While the overall quality of estimation appears be similar, $\hat \Gamma_{100, 40}(\cdot;0.2)$ has a slight artificial kink on the diagonal.


\begin{figure}[b!]
    \begin{subfigure}{0.49\linewidth}
        \includegraphics[width=\linewidth]{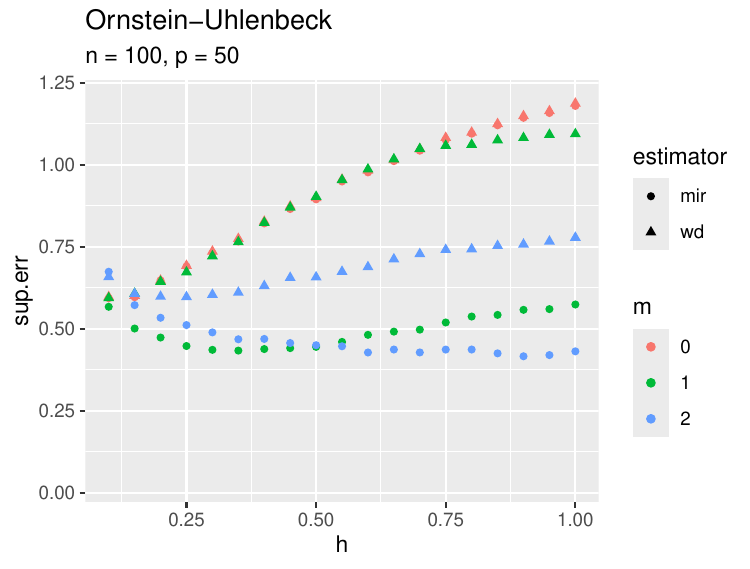}
		\caption{\small Estimation of $\Gamma_{\mathrm{OU}}$. }
    \label{fig:est_comp_2rv_a}
    \end{subfigure}
\hfill
    \begin{subfigure}{0.49\linewidth}
        \includegraphics[width=\linewidth]{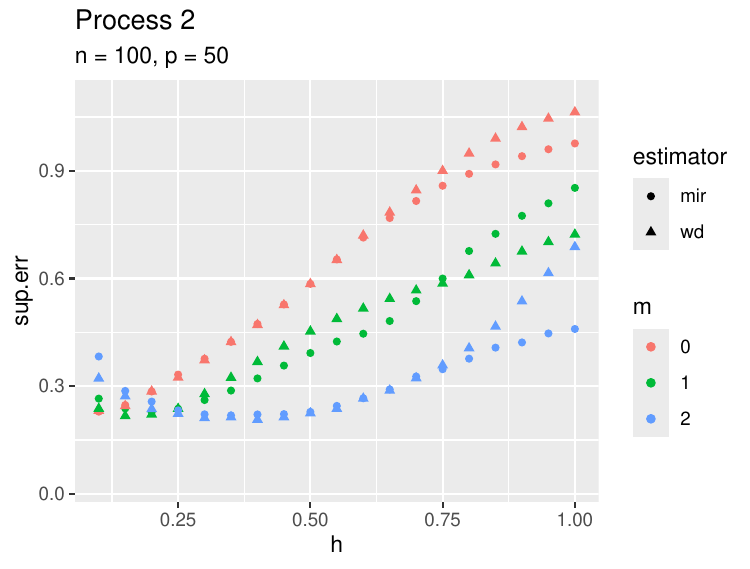}
		\caption{\small Estimation of $\tilde \Gamma$.}
    \label{fig:est_comp_2rv_b}
    \end{subfigure}%
    \caption{\small  Estimation of $\tilde \Gamma$ with $\hat \Gamma_n$ (`mir': mirrored) and with $\hat \Gamma^{\neq}_n$ (`wd': without diagonal)  for different bandwidths, and orders $m$ of the local polynomial weights.}
    \label{fig:est_comp}
\end{figure}

Finally we use $N=1000$ repeated simulations for $n = 100$ and $p = 50$ for estimating $\Gamma_{\mathrm{OU}}$ as well as $\tilde \Gamma$ with both $\hat \Gamma_{n,p}$ and $ \hat \Gamma_{n,p}^{\neq}$ for a grid of bandwidths and orders $m=0,1,2$ of the local polynomial weights. 
The results are display in Figure \ref{fig:est_comp_2rv_a} for estimating $\Gamma_{\mathrm{OU}}$ and in Figure \ref{fig:est_comp_2rv_b} for $\tilde \Gamma$. For the Ornstein-Uhlenbeck process $\Gamma_{\mathrm{OU}}$ the estimators perform similarly for $m= 0$ but there are major differences for the local linear ($m=1$) and local quadratic ($m=2$) case. In particular using the local linear estimator $\Gamma_{n}^{\neq,1}$ is way worse then $\hat \Gamma_n^{1}$. Note that in this case not much smoothing is necessary. In Figure \ref{fig:est_comp_2rv_b} the target kernel is smooth on the whole of $[0,1]^2$ and needs more smoothing to obtain good results. Here both estimators should have the same rates of convergence, which is confirmed in the numerical example. A further advantage of $\hat \Gamma_n$ in comparison to $\hat \Gamma_n^{\neq}$ is the lower computational cost, since only $p\,(p-1)/2$ observations (instead of $p\,(p-1)$) are needed.


\subsection{Weather data in Nuremberg}\label{sec:realdata}

\begin{figure}[b!]
    \begin{subfigure}{0.49\linewidth}
        \includegraphics[width=\linewidth]{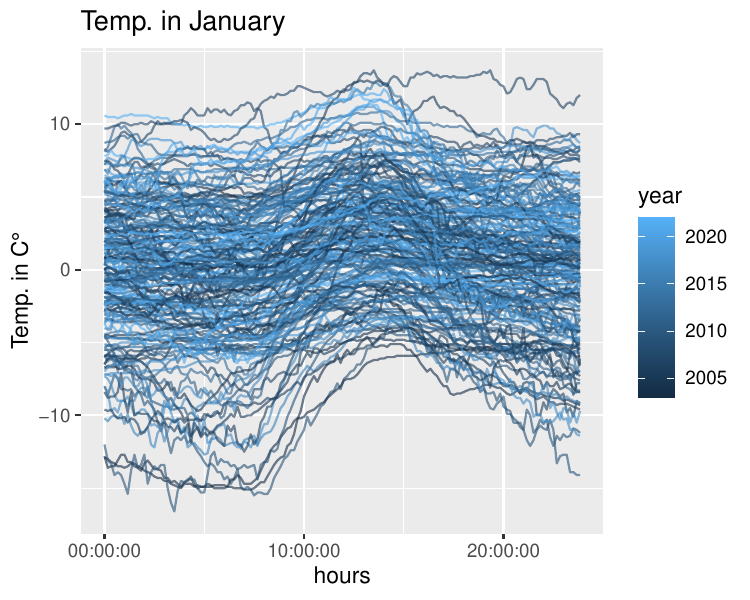}
        \caption{\small January.}
		\label{fig:weather_curves_january}
    \end{subfigure}
\hfill
    \begin{subfigure}{0.49\linewidth}
        \includegraphics[width=\linewidth]{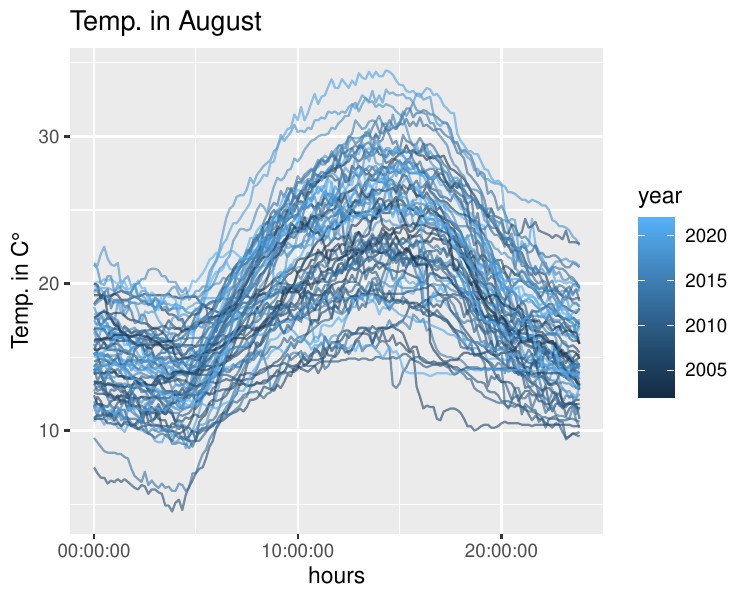}
        \caption{\small August.}
		\label{fig:weather_curves_august}
    \end{subfigure}%
    \caption{\small Temperature curves in January measured in $10$ minute intervals from $2002$ until $2022$. Note that the $y$-axis differ.}
    \label{fig:weather_curves}
\end{figure}

We consider daily temperature curves in each month from the years $2002$ up to $2022$. The data is obtained from the \textit{Deutscher Wetter Dienst (DWD)} at \href{https://opendata.dwd.de/climate_environment/CDC/observations_germany/climate/10_minutes/air_temperature/historical/}{[Link]}. This particular data set was already used in \cite{berger2023dense} to investigate the effect of sparse or dense design on the mean estimation. The observations on each day are taken on a ten minute grid. To reduce the day to day time-series dependency only every third or fourth day, the $1, 4, 8, 12, 15, 18, 22, 25$ and $29^\mathrm{th}$ of every month, was used. This results in around $n = 180$ observations for each month, except for February where $n = 165$. Figure \ref{fig:weather_curves} shows the daily weather curves for January and August. 

We use our estimator in \eqref{eqn:estimatorCovariance} for the covariance kernel and derive estimates {\color{black} $\hat \Gamma^{1/2}(x,x;h)$ of the standard deviation curves $x \mapsto \Gamma^{1/2}(x,x)$ as well as of the correlation surfaces $\Gamma(x,y)/\big(\Gamma(x,x)\, \Gamma(y,y) \big)^{1/2}$.} Results for the standard deviation curves with the bandwidths $144, 288, 720$ for the months January and August can be seen in Figure \ref{fig:std_deviation}. Since a day has $1440$ minutes the bandwidths $144, 288, 720$  are equivalent to $0.1, 0.2, 0.5$ on the unit interval.

\begin{figure}[t!]
    \centering
    \includegraphics[width = 0.75\linewidth]{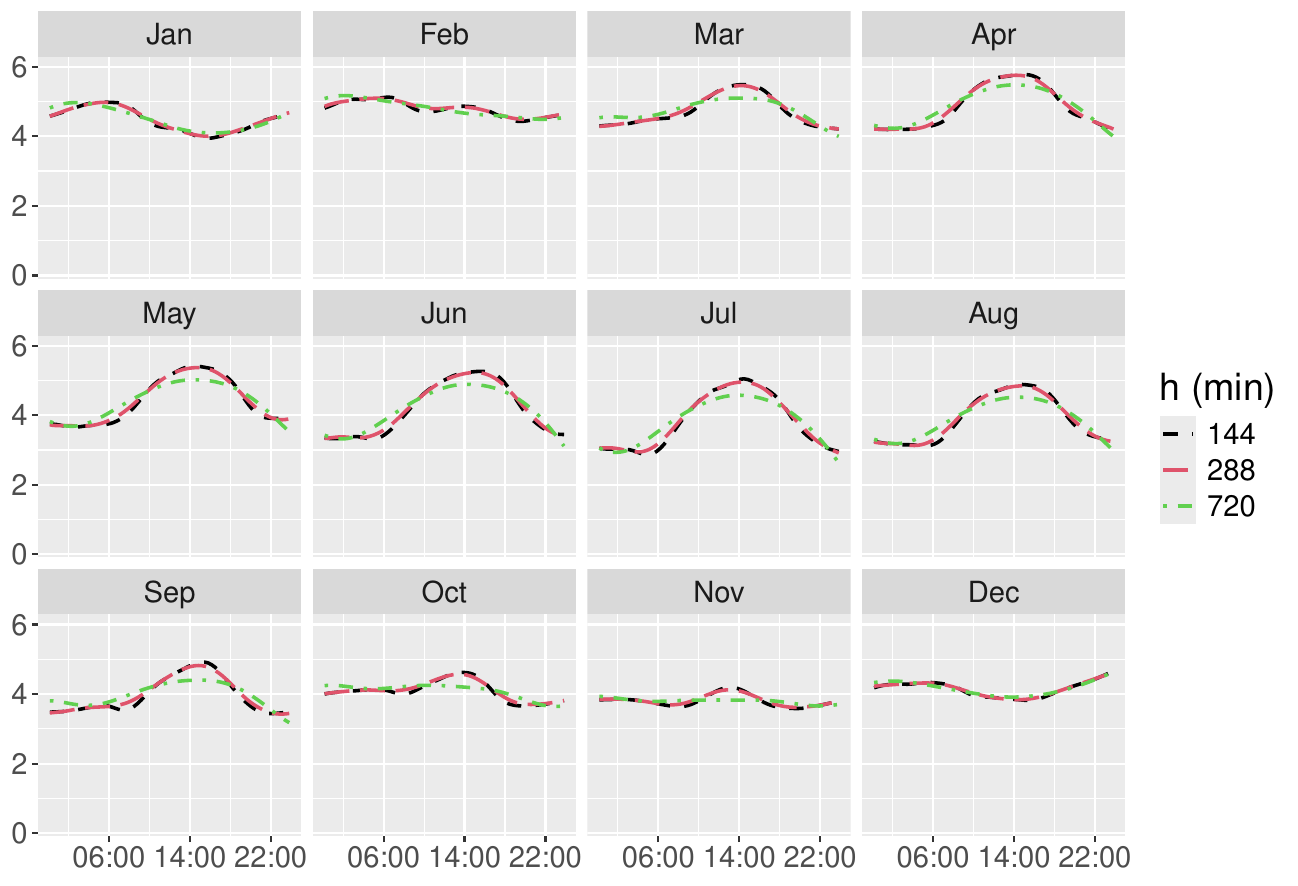}
    \caption{\small Estimation of the standard deviation of the temperatures with three different bandwidths. }
    \label{fig:std_deviation}
\end{figure}

\begin{figure}[b!]
    \begin{subfigure}{0.49\linewidth}
		\includegraphics[width=.8\linewidth]{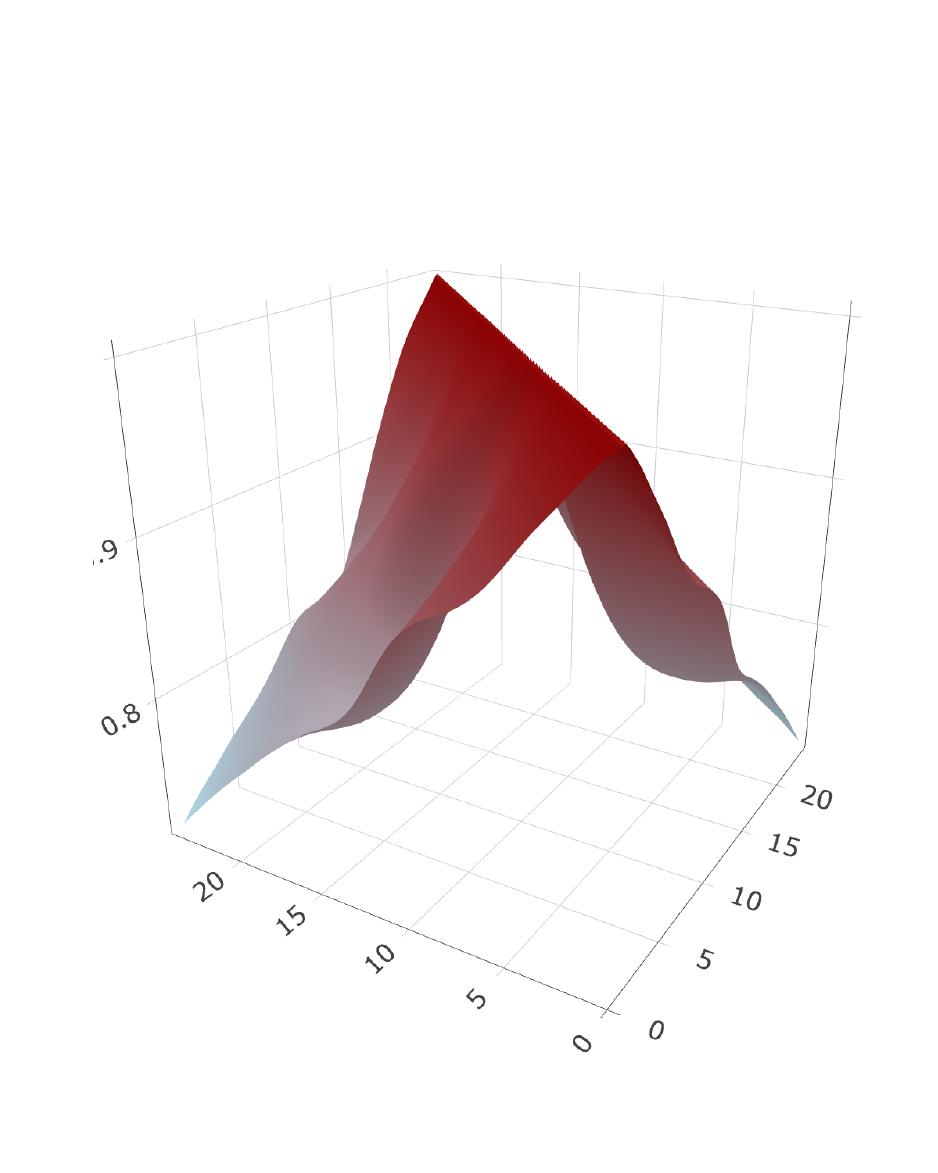}
		\caption{\small January}
		\label{fig:cor_january}
    \end{subfigure}
\hfill
    \begin{subfigure}{0.49\linewidth}
		\includegraphics[width=.8\linewidth]{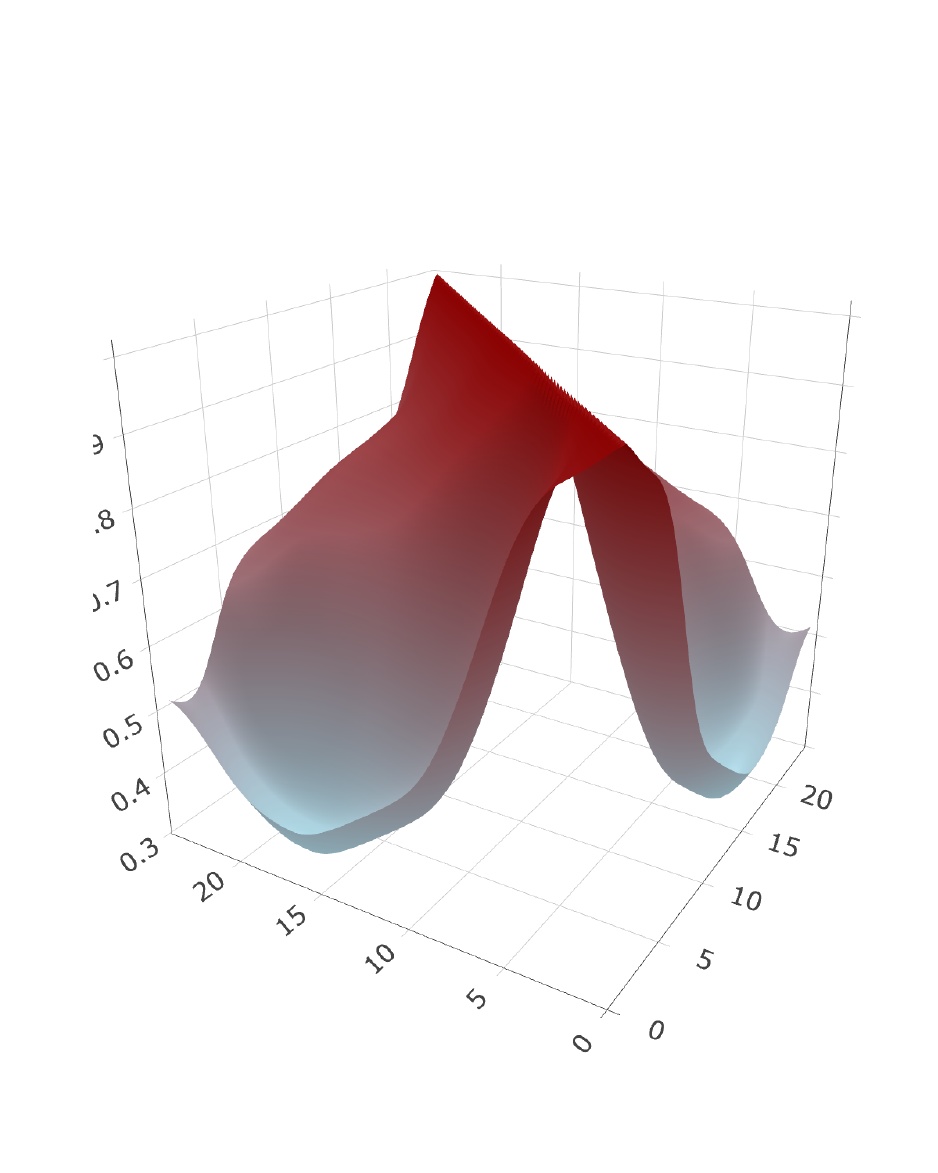}
		\caption{\small August}
		\label{fig:cor_august}
    \end{subfigure}%
    \caption{\small Estimation of the correlation of the temperature during the day. Note that the $z$-axis on the left ranges from $0.6$ to $1$ and on the right from $0.4$ to $1$.}
    \label{fig:cor}
\end{figure}

Next to the notable temperature difference the standard deviation structure seems to be different as well: The winter days have higher standard deviations at night, while the summer days vary more across day time. 
Although the smoothing with $h = 720$ seems a little much, the results are still reasonable. 
Overall the months from February to  April have the highest standard deviations. 
Warm days are the reason for the characteristic bump in the summer months, which are not visible in the cold months from December to February. In December and January the highest standard deviation is in the night due to extremely cold nights, which can be seen in Figure \ref{fig:weather_curves_january}.

In Figure \ref{fig:cor} we display estimates of the correlation function with bandwidth $h = 288$ for January and August. The winter days in January have a higher correlation of the temperature during the night and day compared to the summer month August. However in August the temperature in the morning correlates highly with the day time temperature. Again this is in line with Figure \ref{fig:weather_curves}. When making these estimations it is crucial to take care of the dependency structure of consecutive days, otherwise a high additional time-series correlation in the corners is captured in the estimate.



\section{Conclusions and discussion}\label{sec:conclude}

As the real-data example shows, extensions of our method to time series data would be of quite some interest. Parameter estimation of mean and covariance kernel and related inference methods under the supremum norm have been intensely investigated in recent years \citep{dette2020functional, dette2021functional, dette2022detecting}. Here however it is commonly assumed that full paths of the processes are observed without errors, and results for discrete observations covering the time series setting seem to be missing in the literature. {\color{black} A time-series setting also requires estimation of the cross-covariance kernels $\cov(Z_1(x), Z_{1+h}(y))$, $h \in \N$, and of the long-run covariance kernel. }

\smallskip
{\color{black} 
Estimates of the covariance kernel serve as the basis for estimating the principle component functions. Here most results are in $L_2$ \citep{cai2010nonparametric, hall2006properties, xiao2020asymptotic, zhou2022theory, belhakem2025minimax}. \citet{li2010uniform} and \citet{zhou2022theory} also have rates in the supremum norm, The expansions developed in \citet{hall2009theory} can be helpful for this problem. Rate optimality in $L_2$ has recently been discussed in \citet{belhakem2025minimax}. However, rates in the supremum norm and a CLT in the space of continuous functions for synchronous design still seem to be lacking, and would certainly be of interest when estimating the principle component functions.}

\section{Proofs} \label{sec:proofs}

\subsection{Proof of Theorem \ref{thm:rates_cov_estimation}}\label{sec:proofupperbound}


%
\allowdisplaybreaks

First let us make the error decomposition in Remark \ref{rem:errordecomp} precise. 

\begin{lemma}[Error decomposition]\label{lem:cov_error_decomp}
	For the estimator $\covest \cdot\cdot h$ in \eqref{eqn:estimatorCovariance}, 
	if the weights 
	 satisfy \ref{ass:weights:polynom} for some $\zeta > 0$ then we have the error decomposition 
	\begin{align}
		\covest xyh -\Gamma(x,y) & = \sum_{j<k}^{p} \wjk xyh \bigg[\frac1n \sum_{i=1}^n \epsilon_{i,j}\epsilon_{i,k}\label{eq:decomp:eps2}\\
		&\quad + \Gamma({x_j},{x_k}) - \Gamma(x,y) \label{eq:decomp:bias} \\
		&\quad  +  \frac1n \sum_{i=1}^n\big(Z_i({x_j})Z_i({x_k}) - \Gamma({x_j},{x_k})\big)\label{eq:decomp:stochasticError}\\
		& \quad + \frac1n\sum_{i=1}^n \big(Z_i({x_j})\epsilon_{i,k} + Z_i({x_k})\epsilon_{i,j}\big)\label{eq:decomp:epsZ}\\
		&\quad -  \frac1{n(n-1)}\sum_{i \neq l}^n \big(\epsilon_{i,j} \epsilon_{l,k} + Z_i({x_j})Z_{l}({x_k}) +Z_{i}({x_j})\epsilon_{l,k} + \epsilon_{i,j}Z_l({x_k}) \big)\bigg].\label{eq:decomp:independendTerms}
	\end{align} 
\end{lemma}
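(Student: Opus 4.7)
The plan is to perform a direct algebraic expansion. First I would establish the standard identity
\[
\frac{1}{n-1}\sum_{i=1}^n \big(Y_{i,j}Y_{i,k} - \bar Y_{n,j}\bar Y_{n,k}\big) = \frac{1}{n}\sum_{i=1}^n Y_{i,j}Y_{i,k} - \frac{1}{n(n-1)}\sum_{i\neq l} Y_{i,j}Y_{l,k},
\]
which follows from $n\bar Y_{n,j}\bar Y_{n,k} = \frac{1}{n}\sum_i Y_{i,j}Y_{i,k} + \frac{1}{n}\sum_{i\neq l}Y_{i,j}Y_{l,k}$ and a short manipulation. This separates the summand inside the weighted sum into an ``on-diagonal'' (in $i$) and an ``off-diagonal'' (in $i\neq l$) part.

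Next I would substitute the model $Y_{i,j} = \mu(x_j) + Z_i(x_j) + \e_{i,j}$ into both sums and expand the $3\times 3 = 9$ cross-terms for each. The constant term $\mu(x_j)\mu(x_k)$ appears with coefficient $1$ in both the on-diagonal and off-diagonal sums and therefore cancels. The linear cross-terms $\mu(x_j)\cdot\frac{1}{n}\sum_i Z_i(x_k)$, $\mu(x_j)\bar\e_{n,k}$, and their symmetric counterparts likewise cancel, because $\frac{1}{n(n-1)}\sum_{i\neq l}Z_i(x_j) = \bar Z_n(x_j)$ and similarly for $\e$. Hence $\mu$ drops out completely. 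What remains is exactly the collection of quadratic-in-$(Z,\e)$ terms: $\frac{1}{n}\sum_i \e_{i,j}\e_{i,k}$, $\frac{1}{n}\sum_i Z_i(x_j)Z_i(x_k)$, $\frac{1}{n}\sum_i\bigl(Z_i(x_j)\e_{i,k}+Z_i(x_k)\e_{i,j}\bigr)$ and the corresponding off-diagonal double sums carrying the minus sign.

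Finally I would center the $Z$-product term by writing
\[
\frac{1}{n}\sum_{i=1}^n Z_i(x_j)Z_i(x_k) = \Gamma(x_j,x_k) + \frac{1}{n}\sum_{i=1}^n\bigl(Z_i(x_j)Z_i(x_k)-\Gamma(x_j,x_k)\bigr),
\]
and use the normalization \ref{ass:weights:polynom}, namely $\sum_{j<k}\wjk xyh = 1$, to absorb $-\Gamma(x,y)$ into the weighted sum as $\sum_{j<k}\wjk xyh\bigl(\Gamma(x_j,x_k)-\Gamma(x,y)\bigr)$. Collecting the resulting six groups of terms yields the five bracketed lines \eqref{eq:decomp:eps2}--\eqref{eq:decomp:independendTerms} of the stated decomposition.

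There is no real obstacle; the lemma is purely algebraic and uses only the sum-to-one property of the weights. The only point requiring care is the bookkeeping of the $\mu$-cross-terms, where one must verify that the contribution from $\frac{1}{n}\sum_i$ agrees exactly with that from $\frac{1}{n(n-1)}\sum_{i\neq l}$ so that the mean function cancels; this is where the ``no mean estimation required'' property of the estimator is actually encoded.
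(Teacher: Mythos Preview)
Your proposal is correct and follows essentially the same approach as the paper: a direct algebraic expansion relying on the identity
\[
\frac{1}{n-1}\sum_{i=1}^n U_{i,j}U_{i,k} - \frac{n}{n-1}\bar U_{n,j}\bar U_{n,k}
= \frac{1}{n}\sum_{i=1}^n U_{i,j}U_{i,k} - \frac{1}{n(n-1)}\sum_{i\neq l}U_{i,j}U_{l,k},
\]
the cancellation of the $\mu$-terms, and the sum-to-one property \ref{ass:weights:polynom}. The only cosmetic difference is that the paper first substitutes $Y_{i,j}=\mu(x_j)+Z_i(x_j)+\e_{i,j}$ and then applies the identity to each product type separately, whereas you apply the identity once to $Y$ and then expand; the resulting computations are identical.
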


The proof of Lemma \ref{lem:cov_error_decomp} is provided in the supplementary appendix in Section \ref{app:technical}. 

\bigskip

Next observe that combining \ref{ass:weights:sup} and Assumption \ref{ass:design:localization} yields for $(x,y) \in  T$ that
\begin{enumerate}[label=\normalfont{(W5)}, leftmargin=9.9mm]
	\item For a constant $\Csum> 0$ the sum of the absolute values of the weights $ \displaystyle \sum_{j<k}^p \big| \wjk xyh \big| \leq \Csum$. \label{ass:weights:sum}
\end{enumerate}

\begin{lemma}[Upper Bounds for the rates of convergence]\label{lem:rates:convergence}
	%
	Given  $0 < \beta_0 \leq 1$ and $\gamma, L, C_Z>0$, for the estimator $\covest \cdot\cdot h$ in \eqref{eqn:estimatorCovariance} with $n$ and $p$ large enough the following rates of convergence hold, where we abbreviate $\Zclass = \mc P (\gamma;L,\beta_0, C_Z)$.
	\begin{enumerate}
		\item[i).] If the weights satisfy \ref{ass:weights:polynom} with $\zeta = \floor{\gamma}$, \ref{ass:weights:vanish} of Assumption \ref{ass:weights} and \ref{ass:weights:sum}, then
		\begin{align*}
		\sup_{h \in (\nicefrac cp, h_0]}\, \sup_{Z \in \Zclass }\,	\sup_{(x,y) \in T} \, h^{-\gamma}\, \absb{\sum_{j<k}^p \wjk xyh \big(\Gamma(x_j, x_k) - \Gamma(x,y)\big)} = \mc O\big( 1\big)\,,
		\end{align*}
	with constants $h_0, c >0 $ according to Assumption \ref{ass:weights}.
		\item[ii).] If the weights satisfy \ref{ass:weights:vanish}, \ref{ass:weights:sup} and \ref{ass:weights:lipschitz}, then under Assumption \ref{ass:design:localization} we have that
		\begin{align*}
			\expec\Big[ \sup_{(x,y) \in T}\absb{ \sum_{j<k}^p \wjk xyh \frac1n \sum_{i=1}^n \epsilon_{i,j} \epsilon_{i,k} } \Big] &= \mc O \bigg(  \sqrt{\frac{\log(n\,p)}{n\,(p\,h)^2}}\bigg)\,,\\
			\expec\Big[\sup_{(x,y) \in T}\absb{ \sum_{j<k}^p \wjk xyh \sum_{i, l=1}^n \frac{\epsilon_{i,j} \epsilon_{l,k}}{n(n-1)} } \Big] &= \mc O \bigg( \frac{ \log(n\,p)}{n\,p\,h}\bigg)\,,
		\end{align*}
        where the constants in the $\mc O$ terms can chosen uniformly for $h \in (c/p, h_0]$.
		\item[iii).] If the weights satisfy \ref{ass:weights:sum}, then
		\begin{align*}
			\sup_{h \in (\nicefrac cp, h_0]} \sup_{Z \in \Zclass}  & \expec\Big[ \sup_{(x,y)\in T} \absb{ \frac1{n} \sum_{j<k}^p \wjk xyh \sum_{i=1}^n \big(Z_i(x_j)Z_i(x_k) - \Gamma(x_j, x_k)\big) } \Big] =\mc O\big( n^{-\frac12}\big),\\
			\sup_{h \in (\nicefrac cp, h_0]}\sup_{Z \in \Zclass}  & \expec\Big[ \sup_{(x,y) \in T} \absb{\frac1n\sum_{j<k}^p \wjk xyh \sum_{i,l=1}^n  \frac{Z_i(x_j)Z_l(x_k)}{n-1}} \Big] =\mc O\big( n^{-1}\big)\,.
		\end{align*}
		\item[iv).] If the weights satisfy \ref{ass:weights:vanish}, \ref{ass:weights:sup} and \ref{ass:weights:lipschitz} and Assumption \ref{ass:design:localization}, then
		\begin{align*}
			 \sup_{Z \in \Zclass}\expec\Big[ \sup_{(x,y) \in T}\absb{ \sum_{j<k}^p \wjk xyh \frac1n \sum_{i=1}^n Z_i(x_j) \epsilon_{i,k} } \Big] &= \mc O \bigg(\sqrt{\frac{\log(h^{-1})}{n\,p\,h}}\bigg)\,,\\
			 \sup_{Z \in \Zclass}\expec\Big[\sup_{(x,y) \in T}\absb{ \sum_{j<k}^p \wjk xyh \sum_{i \neq l}^n \frac{Z_i(x_j) \epsilon_{l,k}}{n(n-1)} } \Big] &= \mc O \bigg(\sqrt{\frac{\log(h^{-1})}{n\,p\,h}}\bigg)\,,
		\end{align*}
	where the constants in the $\mc O$ terms can chosen uniformly for $h \in (c/p, h_0]$.
	\end{enumerate}
\end{lemma}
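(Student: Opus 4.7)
The plan is to prove the four assertions in order, each using different ingredients: claim (1) is a deterministic bias bound, claim (3) is handled by reducing to a known sup-norm empirical-process rate, and claims (2) and (4) require concentration inequalities combined with a discretization argument on $T$ exploiting the Lipschitz property \ref{ass:weights:lipschitz}.

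For claim (1), I would apply Taylor's formula to $\Gamma_{\mid T}$ around $(x,y)$ up to total order $\floor{\gamma}$; since $\Gamma_{\mid T} \in \mc H_T(\gamma,L)$, the remainder is $O(\|(x_j-x, x_k-y)\|_\infty^\gamma)$. Property \ref{ass:weights:polynom} with $\zeta = \floor\gamma$ annihilates the polynomial part, and \ref{ass:weights:vanish} restricts the non-zero weights to $\max(|x_j-x|,|x_k-y|)\leq h$, so the remainder is uniformly bounded by $L\,h^\gamma$. Multiplying by $\sum_{j<k}|\wjk xyh|\leq \Csum$ (property \ref{ass:weights:sum}) gives the claim uniformly in $(x,y)\in T$, $h$, and $Z\in\Zclass$.

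For claim (3), observe that boundedness of $\sum_{j<k}|\wjk xyh|\leq\Csum$ allows pulling the sup inside:
\begin{align*}
\sup_{(x,y)\in T}\Abs{\sum_{j<k}\wjk xyh\,\frac{1}{n}\sum_i\big(Z_i(x_j)Z_i(x_k)-\Gamma(x_j,x_k)\big)} \leq \Csum\,\sup_{(s,t)\in[0,1]^2}\Abs{\frac{1}{n}\sum_i\big(Z_i(s)Z_i(t)-\Gamma(s,t)\big)}.
\end{align*}
The outer sup is $\cO(n^{-1/2})$ uniformly over $\Zclass$ by standard empirical-process arguments: the class $\{(s,t)\mapsto Z(s)Z(t)\}$ is Hölder continuous with Hölder constant controlled via \eqref{eq:hoeldercontpathsZ} and having an $L^4$-bound by the definition of $\Zclass$, so the bracketing entropy is polynomial and the $\sqrt n$-rate follows. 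For the off-diagonal-in-$i$ term, expand $\sum_{i\neq l}Z_i(x_j)Z_l(x_k)=(\sum_i Z_i(x_j))(\sum_l Z_l(x_k))-\sum_i Z_i(x_j)Z_i(x_k)$, and apply $\expec[\sup_x|n^{-1}\sum_i Z_i(x)|^2]=\cO(n^{-1})$ (by the same Hölder-class argument together with $\expec[Z(0)^2]<\infty$) plus the already-proven $n^{-1/2}$ bound, giving an overall rate of $n^{-1}$.

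For claim (2), the diagonal-in-$i$ sum is, for each fixed $(x,y)$, a quadratic form $\e^\top A(x,y)\e$ in the sub-Gaussian vector $\e=(\e_{i,j})$, where $A(x,y)$ has entries proportional to $\wjk xyh/n$. By \ref{ass:weights:sup} and Assumption \ref{ass:design:localization}, $\|A(x,y)\|_F^2\lesssim 1/(n(ph)^2)$ (only $\cO((ph)^2)$ non-zero weights each of size $\cO((ph)^{-2})$) and $\|A(x,y)\|_{\mathrm{op}}\lesssim 1/(n(ph)^2)$, so Hanson-Wright yields a sub-exponential tail. To upgrade to a uniform bound, discretize $T$ on a grid of $\cO((1/h)^2)$ points, apply a union bound (contributing $\log(np)$ via $\log(h^{-1})\lesssim\log(np)$ since $h\geq c/p$), and extend continuously using \ref{ass:weights:lipschitz}, whose Lipschitz constant of order $1/(h(ph)^2)$ allows the grid spacing $h/(np)$. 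The off-diagonal-in-$i$ sum factors as $(\sum_i\e_{i,j}/\sqrt n)(\sum_l\e_{l,k}/\sqrt n)/(n-1)$ (minus the diagonal), and maximal-inequality bounds $\max_j|\sum_i\e_{i,j}|=\Op{\sqrt{n\log p}}$ yield the $\log(np)/(nph)$ rate after combining with $\sum|w|\lesssim 1$.

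Claim (4) is the main obstacle. Conditionally on $(Z_i)$, the term $\sum_{j<k}\wjk xyh\,n^{-1}\sum_i Z_i(x_j)\e_{i,k}$ is centred sub-Gaussian with conditional variance proxy
\begin{align*}
V_n(x,y)\lesssim \frac{1}{n^2}\sum_{i,k}\sigma^2\Big(\sum_{j:j<k}\wjk xyh\,Z_i(x_j)\Big)^2.
\end{align*}
I would bound $V_n(x,y)$ uniformly in $(x,y)$ and in $Z\in\Zclass$ in expectation by exploiting \ref{ass:weights:sup} together with Assumption \ref{ass:design:localization} to bound the number of non-zero weights per row, and using $\expec[Z_i(x_j)^2]\leq\expec[Z(0)^2]+L\lesssim 1$; this gives $\expec[V_n(x,y)]\lesssim 1/(nph)$. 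A sub-Gaussian tail conditional on $Z$ plus Fubini then gives pointwise control of order $\sqrt{\log(h^{-1})/(nph)}$; the uniform statement follows by discretizing $T$ on a grid of mesh $\asymp h/(np)$ (so of cardinality polynomial in $h^{-1}$, giving the $\log(h^{-1})$ factor) and extending via the Lipschitz property \ref{ass:weights:lipschitz}, the negligible increments being handled by the rough $L^\infty$-bound on $Z_i$ coming from \eqref{eq:hoeldercontpathsZ}. The off-diagonal $\sum_{i\neq l}Z_i(x_j)\e_{l,k}/(n(n-1))$ is decoupled into a product of independent averages, each controllable by the same sub-Gaussian/empirical-process tools, giving the identical rate.
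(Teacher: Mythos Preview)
Your outline for (i), (iii), and the first display of (ii) matches the paper: Taylor expansion plus \ref{ass:weights:polynom} for the bias; \ref{ass:weights:sum} to reduce (iii) to $\expec\big[\sup_{s,t}\big|n^{-1}\sum_i(Z_i(s)Z_i(t)-\Gamma(s,t))\big|\big]=\mc O(n^{-1/2})$, which the paper obtains via Pollard's manageability framework rather than bracketing, but the idea is the same; and Hanson--Wright plus a $\delta$-net plus \ref{ass:weights:lipschitz} for the quadratic form in (ii). For the second display of (ii) the paper also factors into $\tilde X_{j,n}\tilde X_{k,n}/(n-1)$ with $\tilde X_{j,n}=n^{-1/2}\sum_i\e_{i,j}$, but then applies Hanson--Wright once more to the resulting quadratic form in the sub-Gaussian vector $(\tilde X_{j,n})_j$; your maximal-inequality route $\big|\sum_{j<k}w_{j,k}\tilde X_j\tilde X_k\big|\leq\Csum\max_j|\tilde X_j|^2=\mc O_{\prob}(\log p)$ yields $\mc O(\log(p)/n)$, which does not imply the stated $\mc O(\log(np)/(nph))$ uniformly over $h\in(c/p,h_0]$ (take $h=h_0$ and $p\to\infty$).

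The main gap is in (iv). First, the step ``$\expec[V_n(x,y)]\lesssim 1/(nph)$ plus a sub-Gaussian tail plus Fubini gives pointwise control'' does not work as written: the conditional variance proxy $V_n(x,y)$ is random in $Z$, and you cannot replace it by its expectation inside the sub-Gaussian exponent. What is needed is an almost-sure uniform bound $\sup_{(x,y)}V_n(x,y)\leq\Delta$ with $\Delta\asymp(nph)^{-1}\cdot n^{-1}\sum_i(|Z_i(0)|+M_i)^2$ random; one then bounds the conditional supremum in terms of $\Delta$ and only afterwards applies Jensen to get $\expec[\sqrt\Delta]\lesssim(nph)^{-1/2}$. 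Second, your grid argument is internally inconsistent (mesh $h/(np)$ gives $(np/h)^2$ points, not ``cardinality polynomial in $h^{-1}$''), and any single-scale union bound over a grid fine enough for the increment control via \ref{ass:weights:lipschitz} produces a $\log(np)$ factor rather than the sharper $\log(h^{-1})$ claimed in the lemma. The paper obtains $\log(h^{-1})$ by applying Dudley's entropy integral to the conditional sub-Gaussian process with intrinsic semi-metric $d_{\mid\bs z}\big((x,y),(x',y')\big)\lesssim\sqrt\Delta\,\big(\norm{(x,y)-(x',y')}_\infty/h\wedge 1\big)$: the packing number in this metric is $\lesssim\Delta/(\delta h)^2$, and $\int_0^{\sqrt\Delta}\sqrt{\log\big(\Delta/(\delta h)^2\big)}\,\dx\delta\asymp\sqrt{\Delta\log(h^{-1})}$. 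This chaining step, rather than a one-scale discretization, is the missing ingredient.
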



\begin{proof}[Proof of Lemma \ref{lem:rates:convergence}]
i). The rate of bias term \eqref{eq:decomp:bias} is obtained by standard arguments. For convenience these are detailed in the supplementary appendix, Section \ref{app:technical}.\\

ii). 
We show the bound for the first term in ii), the second is dealt with in the supplementary appendix, Section \ref{app:technical}.
To deal with the quadratic form 
\begin{equation}
		E_{n,p,h}(v) = \sum_{j<k}^{p} w_{j,k}(v;h) \frac1n\sum_{i=1}^n
  \epsilon_{i,j}\epsilon_{i,k}\,, \quad v\in T\,, \label{eqn:def:Enph}
\end{equation}
the main ingredient is the higher dimensional Hanson-Wright inequality \cite[p. 142]{vershynin2018high} which is stated in Lemma \ref{lem:hanson:wright} at the end of this section. This is combined with a discretisation technique 
and the Lipschitz-continuity of the weights, property \ref{ass:weights:lipschitz}.   

Given $\delta>0$ choose a $\delta$-cover $I_\delta = \{\tau_1,\ldots, \tau_{\floor{\sqrt{C_I}/\delta}}\}^2$ of $T$ of cardinality $\nicefrac{C_T}{\delta^2}$ for an appropriate constant $C_T>0$. We estimate
	\begin{align}
		\norm{E_{n,p,h}}_\infty & = \sup_{v^\prime \in I_\delta} \sup_{\substack{v \in T,\\\norm{v-v^\prime}_\infty \leq \delta}} \big|E_{n,p,h}(v)\big| \leq \sup_{v^\prime \in I_\delta} \sup_{\substack{v \in T,\\\norm{v-v^\prime}_\infty \leq \delta}} \Big(\big|E_{n,p,h}(v) - E_{n,p,h}(v^\prime)\big| + \big|E_{n,p,h}(v^\prime)\big|\Big)\nonumber\\
		& \leq  \sup_{\substack{v, v^\prime \in T,\\\norm{v-v^\prime}_\infty \leq \delta}} \big|E_{n,p,h}(v) - E_{n,p,h}(v^\prime)\big| + \sup_{v^\prime \in I_\delta}\big|E_{n,p,h}(v^\prime)\big|.\label{eq:errordiscrete}
 	\end{align}
For the first term, setting $\bs \epsilon_j \defeq (\epsilon_{1,j}, \ldots, \epsilon_{n,j})^\top$, by \ref{ass:weights:lipschitz} and Assumption \ref{ass:design:localization} we have
	\begin{align}
		 \expec\Bigg[\sup_{\substack{v, v^\prime \in T,\\\norm{v-v^\prime}_\infty \leq \delta}}\big|E_{n,p,h}(v) &- E_{n,p,h}(v^\prime)\big|\Bigg]   = \frac1n\,\expec\Bigg[\sup_{\substack{v, v^\prime \in T,\\\norm{v-v^\prime}_\infty \leq \delta}}\bigg|\sum_{j<k}^{p} \big(w_{j,k}(v;h) - w_{j,k}(v^\prime;h)\big) \skpb{\bs \epsilon_{j}}{\bs \epsilon_{k}}\bigg|\Bigg]\nonumber \\
		& \leq 2\,\Clip\,\Ccard\,\delta\,h^{-1}\, \expec\big[\big|n^{-1}\,\max_{j < k }\skpb{\bs \epsilon_{j}}{\bs \epsilon_{k}}\big|\big]\phantom{\frac12}\tag{by \ref{ass:weights:lipschitz} and Assumption \ref{ass:design:localization}}\\
        & = \mc O \bigg( \sqrt{\frac{\log(p)}{n\,(p\,h)^2}}\bigg)\,, \label{eq:rate_sup_over_delta}
	\end{align}
{\color{black} 
where for the last inequality we take $\delta \defeq 1/(np)$ if $ n \gtrsim \log(p)$ and $\delta = 1/(\sqrt{ \log(p) n}\,p)$ if $\log(p) \gtrsim n$ and used the result from Lemma \ref{lem:rate_skp_subExp} in the supplementary Appendix which bounds the expected value.} 

As for the second term in \eqref{eq:errordiscrete}, in order to apply the Hanson-Wright inequality, Lemma \ref{lem:hanson:wright}, we need to find upper bounds for the Frobenius and operator norm of the matrix $A =  (\wjk xyh\, \ind_{j<k})_{j,k = 1,\ldots, p}$ $ \in \R^{p\times p}.$
%
%
%
	 From the properties \ref{ass:weights:sup} and \ref{ass:weights:sum} of the weights the Frobenius-norm satisfies
	\begin{align}
		\norm{A}^2_F & = \sum_{j < k}^{p}\wjk xyh^2 \leq \frac{\Cmax\,\Csum}{({p\,h})^2}\,, \label{eqn:frobenius:A}
	\end{align}
	For the operator-norm $\norm{A}_{\mathrm{op}} = \max_{j=1,\ldots,p} s_j$, where $s_j, j = 1, \ldots,p,$ are the singular values of $A$, 
 we have that
	\begin{align}
		\norm{A}_{\mathrm{op}} & \leq\big(\norm{A}_1\,\norm{A}_\infty\big)^{\nicefrac12} \leq \frac{\Cmax\,\Ccard}{{p\,h}} \label{eqn:operator:A}
	\end{align}
	by the bounds
 \begin{align*}
 \norm{A}_1 & = \max_{k} \sum_{j} \abs{w_{j,k}(v;h)\, \ind_{j<k}} \leq \frac{\Cmax\,\Ccard}{p\,h}, \quad
 \norm{A}_\infty  = \max_{j} \sum_{k} \abs{w_{j,k}(v;h)\, \ind_{j<k}} \leq \frac{\Cmax\,\Ccard}{p\,h}     
 \end{align*}
 by Assumption \ref{ass:design:localization} and  \ref{ass:weights:sum}. See \citep{turkmen2007some} for a proof of the first inequality in \eqref{eqn:operator:A}.  
	Then the statement of the Hanson-Wright inequality (Lemma \ref{lem:hanson:wright}) yields for every $(x,y)\in T$
	\begin{align*}
		\prob\Big(\big|E_{n,p,h}(x,y)\big| \geq t\Big) 
        &\leq 2\, \exp\Bigg(-\tilde c\,\min\bigg(\frac{t^2\,n\,({p\,h})^2}{K^4\,\Cmax\,\Csum},\; \frac{t\,n\,{p\,h}}{K^2\,\Cmax\,\Ccard}\bigg)\Bigg),
	\end{align*}
	and therefore
    \begin{align}
		\prob\bigg(\sqrt{\frac{n\,({p\,h})^2}{\log(n\,p)}}\big|E_{n,p,h}(x,y)\big| \geq t\bigg) 
		& \leq 2\,\exp\bigg( - \frac tC\, \min\Big(\log(n\,p), \sqrt{n\,\log(n\,p)}\,\Big)\bigg) \label{eqn:HW:application}
	\end{align}
	with $C \defeq K^4\,\Cmax\max(\Csum, \Ccard)/\tilde c$ and $t\geq 1$. First we consider the case $\log(n\,p) \lesssim n$. Taking $\eta \geq 1$ yields
	\begin{align}
		& \expec\Bigg[\sup_{(x,y) \in I_\delta} \bigg|\sqrt{\frac{n\,({p\,h})^2}{\log(n\,p)}}E_{n,p,h}(x,y)\bigg|\Bigg] 
		 =  \int_0^\infty \prob\bigg(\sup_{(x,y) \in I_\delta}\sqrt{\frac{n\,({p\,h})^2}{\log(n\,p)}}\big|E_{n,p,h}(x,y)\big| \geq t\bigg) \dx t\nonumber\\
    %
    %
		& \leq \eta + \sum_{(x,y) \in I_\delta}\int_\eta^\infty \prob\bigg(\sqrt{\frac{n\,({p\,h})^2}{\log(n\,p)}}\big|E_{n,p,h}(x,y)\big| \geq t\bigg) \dx t
		\leq \eta + 2\,C_I\,(np)^2\int_{\eta}^\infty\exp\bigg(- \frac{t\, \log (n\,p)}{C}\bigg) \dx t \nonumber\\
    %
    %
        & = \eta + 2\, C_I\, C\,\frac{(np)^{2-\nicefrac \eta C}}{\log(n\,p)}= \mc O\bigg(1+\frac1{\log\big(n\,p\big)}\bigg)\,, \label{eqn:disc:HW:estimate}
	\end{align}
	by choosing $\eta = 2\,C$. 
    Together with \eqref{eq:rate_sup_over_delta} we conclude that
	\begin{equation} \label{eqn:final_rate_error_term}
		\expec\Big[\normb{E_{n,p,h}}_\infty\Big] = \mathcal{O}\bigg(\sqrt{\frac{\log(n\,p)}{n\,({p\,h})^2}}\bigg)\,.
	\end{equation}
 In the case where $p$ grows exponentially in $n$, meaning $\log(n\,p) \simeq n$, the same calculations with $\delta = 1/(\sqrt{\log(p)\,n}\,p)$ lead to
 \begin{align*}
     \expec\Bigg[\sup_{(x,y) \in I_\delta} \bigg|\sqrt{\frac{n\,({p\,h})^2}{\log(n\,p)}}E_{n,p,h}(x,y)\bigg|\Bigg] &  = \mc O\bigg( 1 + \frac1n\bigg)
 \end{align*}
By choosing $\eta$ large enough such that $p^2 \lesssim \exp( n\,\eta/c)$ we get \eqref{eqn:final_rate_error_term}. If $p$ is of even higher order limiting the amount of observation points concludes the proof of the first statement of \textit{ii)}.\\

iii). Again we focus on the first bound, the second can be obtained similarly, details are provided in Section \ref{app:technical} in the supplementary appendix.
%
	First note that by \ref{ass:weights:sum}, we may bound 
	\begin{align}
	 \expec\bigg[\sup_{(x,y) \in T}\Big|\frac1{\sqrt n} \sum_{j<k}^{p} \wjk xyh &\sum_{i=1}^n \big(Z_i({x_j})Z_i(x_k) - \Gamma(x_j, x_k)\big)\Big|\bigg]\notag \\
     \leq  & \Csum \, \expec\bigg[\sup_{(x,y) \in T}\Big|\frac1{\sqrt n} \sum_{i = 1}^n (Z_i(x)Z_i(y) - \Gamma(x,y))\Big|\bigg].\label{eq:uppunifpollardfirst}
\end{align}
 To bound \eqref{eq:uppunifpollardfirst} we shall apply the maximal inequality \citet[Section 7, display (7.10)]{pollard1990empirical}, see also \citet[Lemma 4]{berger2023dense}.
    %
%
By definition of the class of processes $Z_i \in \mc P (\gamma;L,\beta_0, C_Z)$, we have $\Gamma \in \mc H_{T}(\gamma, L)$ so that $\Gamma$ is uniformly upper bounded by $L$, and $|Z_i(x)| \leq |Z_i(0)| + M_i$. Therefore
a square-intergable envelope $\Phi_{n,i}$ of $(Z_i(x)Z_i(y) - \Gamma(x,y))/\sqrt n$ is given by
	\begin{align*}
		\frac1{\sqrt n}\abs{Z_i(x)Z_i(y) - \Gamma(x,y)} &\leq \frac1{\sqrt n}\big(2\,Z_i^2(0) + 2\, M_i^2+ L  \big)\defeql \Phi_{n,i}.
	\end{align*}

 To check the assumption of manageability of $(Z_i(x)Z_i(y) - \Gamma(x,y))/\sqrt n$ in the sense of \citep[see Definition 7.9, p. 38]{pollard1990empirical}, see also \citet[Section 8.3]{berger2023dense}, we note that  from the Hölder continuity of $\Gamma$ as well as of the paths of $Z_i$ in \eqref{eq:hoeldercontpathsZ} the triangle inequality gives
	\begin{align*}
		\frac1{\sqrt n}\Abs{Z_i(x)Z_i(y) - \Gamma(x,y) - \big(Z_i(x^\prime)Z_i(y^\prime) - \Gamma(x^\prime, y^\prime)\big)} & \leq \, 2 \,\Phi_{n,i}\, \max\big(\abs{x-x^\prime},\abs{y - y^\prime}\big)^{\min(\beta, \gamma)}.
	\end{align*}
Then \citet[Lemma 3]{berger2023dense} implies manageability. 

Now \citet[Section 7, display (7.10)]{pollard1990empirical}, see also \citet[Lemma 4]{berger2023dense}, implies that \eqref{eq:uppunifpollardfirst} is upper bounded by 
$\frac1n \sum_{i=1}^n \expec[\Phi_{i,n}] \leq \text{const.}$
   uniformly over the function class $\mc P (\gamma;L,\beta_0, C_Z)$. 
	

\smallskip

iv). Again we focus on the first bound, for the second details are provided in Section \ref{app:technical} in the appendix.
Given $\bs z = (z_{i,j})$, $i=1, \ldots, n$, $j=1, \ldots, p$ let
%
%
$$S_{\mid \bs z}(x,y) \defeq \sum_{j<k}^p \wjk xyh \frac1n \sum_{i=1}^nz_{i,j}\epsilon_{i,k}.$$
By conditioning on $\bs Z = (Z_i(x_j))$ we obtain 
$$
\expec\big[\expec[\sup\abs{S_{\mid \bs Z}(x,y)}|\bs Z]\big] = g(\bs Z),\quad g(\bs z) = \expec \big[  \sup_{(x,y)\in T}\abs{S_{\mid \bs z}(x,y)} \big].$$
%

To apply Dudley's entropy bound \cite[p.~100]{van1996weak}  we show that for given $\bs z$ the process $S_{\mid \bs z}(x,y) $ is sub-Gaussian with respect to the semi norm 
$$d_{\mid \bs z}\big((x,y), (x^\prime, y^\prime)\big) = \zeta\, \expec[(S_{\mid \bs z}(x,y) - S_{\mid \bs z}(x^\prime, y^\prime))^2]^{1/2},\quad (x,y),\, (x^\prime, y^\prime) \in T,$$
where $\zeta>0$ is as in Assumption \ref{ass:distribution}. From  Assumption \ref{ass:distribution}, for $\lambda \in \R$
\begin{align*}
    \expec\Big[ \exp\big(\lambda\, \big(S_{\mid \bs z}(x,y) &- S_{\mid \bs z}(x^\prime, y^\prime)\big)\Big]\\ & = \prod_{i= 1}^n \prod_{k= 2}^p\expec \Bigg[  \exp \bigg(\lambda\, \Big( \sum_{j= 1}^{k-1} \big( \wjk xyh - \wjk{x^\prime}{y^\prime}{h}\big) \,\frac{z_{i,j}}n \Big) \epsilon_{i,k}\bigg)\Bigg] \\
     &\leq  \exp\bigg( \frac{\lambda^2}{2} \sigma^2 \zeta^2 \sum_{i=1}^n \sum_{k=2}^{p} \Big( \sum_{j = 1}^{k-1} \big( \wjk xyh - \wjk {x^\prime}{y^\prime}h\big) \frac{z_{i,j}}n \Big)^2 \bigg)\\
    &=  \exp\bigg( \frac{\lambda^2}{2}\, \, d_{\mid \bs z}^2\big((x,y), (x^\prime, y^\prime)\big) \bigg)\,.
\end{align*}
To upper bound $d_{\mid \bs z}^2 \big((x,y), (x^\prime, y^\prime)\big)$, note that by Assumption \ref{ass:design:localization} and \ref{ass:weights:vanish} for at most $2\Ccard ph$ indices $j$ and $k$ respectively the increment of the weights can be non-zero. 
Using the property \ref{ass:weights:lipschitz} for these differences we obtain the bound
%
\begin{align}
    d_{\mid \bs z}^2 \big((x,y), (x^\prime, y^\prime)\big) & \leq \zeta^2\,\sigma^2\,  \sum_{i = 1}^n  \,2\Ccard ph  \Big(2\Ccard ph \,\frac{1}{(p\,h)^2} \Clip\, \Big(\frac{\max(\abs{x-x^\prime}, \abs{y-y^\prime})}h\wedge 1\Big)\, \frac{m_i}{n}\, \Big)^2\nonumber\\
    & \leq 8\, \Clip^2\,\Ccard^3 \,\frac{\zeta^2\,\sigma^2}{n^2\,p\,h}\,  \sum_{i = 1}^n m_i^2 \,  \bigg(\frac{\max(\abs{x-x^\prime}, \abs{y-y^\prime})}h\wedge 1\bigg)^2.\label{eq:boundsquareddist}
\end{align}
The diameter of $[0,1]^2$ under $d_{\mid \bs z}$ is then upper bounded by
\begin{align*}
    \diam_{d_{\mid \bs z}}([0,1]^2)^2 = \sup_{x,x^\prime, y, y^\prime \in [0,1]} d_{\mid \bs z}^2\big((x,y), (x^\prime, y^\prime)\big) \leq 8\, \Clip^2\,\Ccard^3 \,\frac{\zeta^2\,\sigma^2}{n^2\,p\,h}\,  \sum_{i = 1}^n m_i^2 \, \defeq \Delta_{\bs m}\,, 
\end{align*}
where $\bs m = (m_1, \ldots, m_n)^\top$, and the packing number is upper bounded by
 	$D\big( [0,1]^2,\delta;d_{\mid \bs z} \big)  
    \leq  \Delta_{\bs m} (\delta\, h)^{-2}.$

Dudley's entropy bound for sub-Gaussian processes \citep[p.~100]{van1996weak} yields for $(x_0, y_0) \in T$ that
 	\begin{align} \label{eq:dudley:bound}
 		g(\bs z ) = \expec\Big[\sup_{(x,y) \in T}\absb{S_{\mid \bs z}(x,y)}\Big] \leq \expec\Big[ \big| S_{\mid \bs z}(x_0, y_0) \big| \Big]+ K\, \int_0^{\diam_{d_{\mid \bs z}}([0,1]^2)} \sqrt{\log \Big( D\big( [0,1]^2,\delta \big) \Big)} \,\dx \delta\,.
 	\end{align}
Using $\int_0^a \sqrt{\log (x^{-1})}\dx x = a\sqrt{-\log (a)} + a/(2\sqrt{-\log (a)})$
 for $0<a<1$ and the above bound on the packing number we get that
\begin{align}
    \int_0^{\diam_{d_{\mid \bs z}}([0,1]^2)} \sqrt{\log \Big( D\big( [0,1]^2,\delta\big) \Big)} \,\dx \delta\, &  \leq \int_0^{\Delta_{\bs m}^{1/2}} \sqrt{\log\big(\Delta_{\bs m}\cdot{(\delta {h})^{-2}} \big)}\dx \delta \nonumber\\
    %
    &= \sqrt{2\,\Delta_{\bs m}}\bigg(\sqrt{- \log(h)} + \frac 1{2\sqrt{-\log(h)}}\bigg). \label{eq:dudley:second}
\end{align}
A computation to that leading to \eqref{eq:boundsquareddist} gives the bound
\begin{align}
	\expec\big[ \abs{S_{\mid \bs z}(x_0, y_0)}\big]  & \leq  
	\big(\expec\big[ \abs{S_{\mid \bs z}(x_0, y_0)} ^2\big]\big)^{1/2} 
 \leq \Delta_{\bs m}^{1/2}\,, \label{eq:dudley:first}
\end{align}
Inserting this bound and \eqref{eq:dudley:second} into \eqref{eq:dudley:bound} yields
\begin{align*}
    g(\bs z) & \leq \sqrt{\Delta_{\bs m}} +  K\cdot \sqrt{2\,\Delta_{\bs m}}  \cdot\bigg( \sqrt{-\log(h)} - \frac1{2\sqrt{\log(h)}} \bigg)
\end{align*}
Now from \eqref{eq:hoeldercontpathsZ} it follows that $Z_i(x_j) \leq \abs{Z_i(0)} + M_i$ a.s.. Replacing the deterministic $m_i$ by $\abs{Z_i(0)} + M_i$, using  
%
%
%
 $\expec[(\abs{Z_i(0)} + M_i)^2] \leq 2\expec[Z_i(0)^2 + M_i^2] \leq 2\,C_Z <\infty$ and  Jensen's inequality gives 
\begin{align*}
	\expec\big[ \sqrt{\Delta_{\abs{\bs Z(0)} + \bs M}}\,\big] & \leq 
    \bigg(\Clip^2\,\Ccard^3 \sum_{i=1}^n \frac{\expec[(\abs{Z_i(0)} + M_i)^2]}{n^2}\,\frac{\sigma^2}{p\,h}\bigg)^{1/2}\\
    &\leq \bigg(\frac{2\,C_Z\,\Clip^2\,\Ccard^3\,\sigma^2}{n\,p\,h}\bigg)^{1/2} = \mc O\big((n\,p\,h)^{-1/2}\big)\,,
\end{align*}
so that overall
\begin{align*}
    \expec\Big[\expec\big[\sup_{x,y} \absb{S(x,y)}\mid \bs Z\big]\Big] & \leq \expec\Big[ \sqrt{\Delta_{\abs{\bs Z(0)} + \bs M}}\,\Big] + \expec\Big[ \sqrt{2\Delta_{\abs{\bs Z(0)} + \bs M}}\,\Big]\bigg( \sqrt{-\log(h)} - \frac1{2\sqrt{\log(h)}} \bigg)  \\
    &= \mc O\bigg( \frac1{\sqrt{n\,p\,h}} +  \sqrt {\frac{\log(h^{-1})}{n\,p\,h}} \bigg).
\end{align*}
\end{proof}

\begin{proof}[Proof of Theorem \ref{thm:rates_cov_estimation}]
	Follows by the upper bounds for the rates of convergence from Lemma \ref{lem:rates:convergence} with
%
	$h= \mc O(p^{-1})$ for any feasible sequence $h$.
\end{proof}

To conclude this section we state the Hanson-Wright-Inequality used in the above proof, and give an upper bound for the maximal singular value of a matrix $A \in \R^{p \times p}$ in terms of matrix norms.   

\begin{lemma}[Higher dimensional Hanson-Wright inequality]\label{lem:hanson:wright}
	Let $X_1,\ldots, X_p$ be independent, mean-zero, sub-Gaussian random vectors in $\R^n$. Further let $A = (a_{j,k})_{j,k} \in \R^{p\times p}$ be a matrix. For any $t \geq 0 $ we have
	\begin{align}
		\prob\bigg(\Big|\sum_{j,k = 1}^p {a_{j,k}} \skpb{X_j}{X_k} &- \sum_{j = 1}^p a_{j,j} \expec\skpb{X_j}{X_j}\Big| \geq t\bigg) \nonumber  \\ & \leq 2\, \exp\Bigg(-\tilde c\,\min\bigg(\frac{t^2}{n\,K^4\, \norm{A}_F^2}, \frac{t}{K^2\,\norm{A}_{\mathrm{op}}}\bigg)\Bigg),
	\end{align}
	where the Orlicz-norm $\max_i \norm{X_i}_{\phi_2} = K < \infty$, $\norm{A}_F^2 = \sum_{j,k}^p a_{j,k = 1}^2$ and $\norm{A}_{\mathrm{op}} = \max_i s_i$, where $s_i, i = 1\ldots,p,$ are the singular values of $A$ and $\tilde c > 0 $ is a constant.
\end{lemma}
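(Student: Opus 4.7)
The plan is to reduce the bound to the classical scalar Hanson--Wright inequality by stacking the blocks into a single random vector, and then to handle the possible lack of independence between coordinates inside a block by a decoupling step.

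First, set $X = (X_1^\top, \ldots, X_p^\top)^\top \in \R^{np}$ and observe that
\begin{equation*}
    \sum_{j,k=1}^p a_{j,k}\,\skp{X_j}{X_k} = X^\top B X, \qquad B \defeq A \otimes I_n \in \R^{np \times np},
\end{equation*}
with $\expec[X^\top B X] = \sum_j a_{j,j}\,\expec\skp{X_j}{X_j}$ since the blocks are independent and mean zero. Standard Kronecker identities give $\norm{B}_F^2 = n\,\norm{A}_F^2$ and $\norm{B}_{\mathrm{op}} = \norm{A}_{\mathrm{op}}$, and the factor $n$ in the Frobenius scaling is exactly what produces the $n$ in the Gaussian branch of the stated tail. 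I would then decompose $Q \defeq X^\top B X - \expec[X^\top B X] = D + R$ into a diagonal part $D = \sum_j a_{j,j}(\norm{X_j}^2 - \expec\norm{X_j}^2)$ and an off-diagonal part $R = \sum_{j\neq k} a_{j,k}\,\skp{X_j}{X_k}$. Since $D$ is a sum of \emph{independent} centered sub-exponentials of sub-exponential norm $\lesssim K^2$, classical Bernstein immediately delivers a two-regime tail for $D$ with parameters $\sum_j a_{j,j}^2 \leq \norm{A}_F^2$ and $\max_j |a_{j,j}| \leq \norm{A}_{\mathrm{op}}$.

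The off-diagonal part $R$ is the substantive step: coordinates inside a single block $X_j$ are not assumed independent, so the scalar Hanson--Wright does not apply to $X^\top B X$ directly. My plan is to invoke the de la Pe\~na--Gin\'e decoupling inequality to pass, in any convex moment sense, to the decoupled bilinear form $R' = \sum_{j,k} a_{j,k}\,\skp{X_j}{X'_k}$ with $X'$ an independent copy of $X$. Conditionally on $X'$ one has $R' = \sum_j \skp{X_j}{v_j}$ with $v_j = \sum_k a_{j,k} X'_k$, a sum of independent mean-zero sub-Gaussians of total variance proxy $\lesssim K^2 \sum_j \norm{v_j}^2 = K^2\,\norm{B X'}^2$. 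A Chernoff estimate conditional on $X'$ then reduces the tail of $R$ to a concentration bound for the quadratic form $\norm{B X'}^2 = X'^\top(A^\top A \otimes I_n) X'$, which is of the same type as the original problem and can be controlled either by the same argument one level deeper or directly by an isotropic sub-Gaussian estimate for $\norm{B X'}$. A union bound over $D$ and $R$ combined with an optimization of the Chernoff parameter then yields the claimed Bernstein tail.

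The main obstacle will be executing the decoupling cleanly for arbitrary sub-Gaussian random vectors and tracking universal constants through the two-regime interpolation, so that the final bound appears in precisely the stated form with the Frobenius and operator norms of the original $p\times p$ matrix $A$ rather than those of the enlarged matrix $B$. In the paper's own use of the lemma inside Lemma~\ref{lem:rates:convergence} the blocks $X_j = \bs\epsilon_j$ actually have independent coordinates by Assumption~\ref{ass:distribution}, so the decoupling layer can be skipped entirely and the scalar Hanson--Wright of Vershynin (2018) applied directly to the stacked quadratic form $X^\top B X$.
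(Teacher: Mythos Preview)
Your reduction to $X^\top B X$ with $B = A \otimes I_n$ and the identification $\norm{B}_F^2 = n\,\norm{A}_F^2$, $\norm{B}_{\mathrm{op}} = \norm{A}_{\mathrm{op}}$ is correct and is the right first move. Your closing remark is also on target: in the paper's only use of the lemma the blocks $\bs\epsilon_j$ have independent coordinates, so the stacked vector has $np$ independent sub-Gaussian entries and the scalar Hanson--Wright applies to $X^\top B X$ directly. The paper's own proof is a one-line reference to Vershynin's argument: one writes $Q = \sum_{i=1}^n (X^{(i)})^\top A\, X^{(i)}$ with $X^{(i)} = (X_{1,i},\ldots,X_{p,i})^\top$, applies the scalar MGF bound to each summand (the $p$ entries of $X^{(i)}$ are independent since the blocks are), and then multiplies the $n$ MGF bounds using independence \emph{across} $i$. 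That last step again requires independent coordinates within each block, so the paper's sketch and your ``easy case'' coincide.

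Where your programme breaks down is the attempt to prove the lemma in the stated generality of arbitrary sub-Gaussian vectors. Your diagonal estimate ``$\norm{X_j}^2 - \expec\norm{X_j}^2$ has sub-exponential norm $\lesssim K^2$'' is false without further structure, and in fact the lemma itself fails at that level of generality. Take $p=1$, $A=(1)$, and let $X_1 \in \R^n$ equal $0$ with probability $1/2$ and be uniform on the sphere of radius $\sqrt{2n}$ with probability $1/2$. Then $X_1$ is mean-zero and sub-Gaussian with $K$ bounded uniformly in $n$ (each marginal $\skp{X_1}{v}$ is, on the nonzero event, $\sqrt{2n}$ times a single coordinate of a uniform point on $S^{n-1}$, whose $\psi_2$-norm is $O(n^{-1/2})$). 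Yet $\norm{X_1}^2 - \expec\norm{X_1}^2 = \pm n$ with probability $1/2$ each, so at $t=n$ the claimed bound would read $1 \leq 2\exp(-\tilde c\, n)$. The off-diagonal decoupling cannot help here since with $p=1$ there is no off-diagonal part. So your plan for the general case would fail precisely at the diagonal step; the honest statement needs independent coordinates within blocks, as both the paper's proof sketch and its application implicitly assume, and under that hypothesis your stacked-vector argument already finishes the job in one line.
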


\begin{proof}
	Follow the proof of \citep[Theorem 6.2.1, p. 139ff]{vershynin2018high} and replace $X^\top A X$ and $\lambda$ by $X_k^\top AX_k$ and $\lambda/d$ respectively. Make use of independence then.
\end{proof}



\subsection{Proof of Theorem \ref{theorem:optimality}}\label{ssec:proof:optimality}

\begin{proof}[Proof of Theorem \ref{theorem:optimality}.]
	
	In the proof we rely on the reduction to hypothesis testing as presented e.g.~in \citet[section 2]{tsybakov2008introduction}.
	In all hypothesis models we set $\mu=0$. 
	
	For the lower bound $  p^{-\gamma}$, using the method of two sequences of hypotheses functions we set 
	$Z_{i;0} = 0$ and 
	construct $Z_{i;1,   p}$ such that its covariance kernel, $\Gamma_{1,   p}$, satisfies $\norm{\Gamma_{1,   p}}_\infty \geq c\,   p^{-\gamma}$ for some constant $c>0$ and that $Z_{i;1,   p}(x_j)=0$ at all design points $  x_{  j}$, so that the distribution of the observations for $Z_{i;0}$ and $Z_{i;1,p}$ coincide. {\color{black} Set $x_0 = 0, x_{p+1}=1$.  Then there is an $l \in \{0,1,\ldots, p\}$ for which $x_{l+1}- x_{l}\geq (p+1)^{-1}$, choose such an $l$. 
   For a constant $\tilde L>0$ to be specified we then set
	$$ f(x) = \tilde L\, p^{-\gamma/2}\, \tilde f\big( 2\, (p+1)\, (x -(x_{l}+ x_{l+1})\,/\,2)\big),\quad \text{where}\quad \tilde f(  x)=  
	\exp\big(-\frac1{1-x^2}\big)\one_{\{|x| < 1\} }.$$
    Note that $f$ has support in $[x_l, x_{l+1}]$ and vanishes at $x_l$ and $x_{l+1}$. 
	Then let
	$ Z_{i;1,p}(  x)= W_i\, f(x), $
	where $W_i \sim \mathcal{N}(0,1)$, which are taken independent over $i$, and note that $Z_{i;1,   p}(  x_{  j})=0$ at all design points. 
	Furthermore, the covariance kernel of $Z_{i;1,p}$ is 
	$$ \Gamma_{1,p}(x,y) = f(x)\, f(y) = \tilde L^2\, p^{- \gamma} 
	\tilde f\big( 2\, (p+1)\, (x -(x_{l}+ x_{l+1})\,/\,2)\big)\, \tilde f\big( 2\, (p+1)\, (y -(x_{l}+ x_{l+1})\,/\,2)\big).$$
	At $x = y = (x_{l}+ x_{l+1})\,/\,2$ this results in a value of 
	$\Gamma_{1,p}\big((x_{l}+ x_{l+1})\,/\,2,(x_{l}+ x_{l+1})\,/\,2\big) = \tilde L^2\, p^{-\gamma}\, e^{-2}, $
	so that $\norm{\Gamma_0 - \Gamma_{1,   p}}_\infty \geq c\, p^{-\gamma}$ holds true. Finally, using the chain rule and the fact that all derivatives of the bump function in the definition of $\tilde f$ are uniformly bounded, $\Gamma_{1,p}$ is $\gamma$-Hölder smooth with constant proportional to $\tilde L^2$, which can be adjusted to yield the Hölder norm $L$. This concludes the proof for the lower bound $  p^{-\gamma}$.  
	}
	
	\vspace{0.3cm}

    {\color{black} For the remaining two settings we take $\sigma_0^2 =1$. 
    
	For the lower bound of order $n^{-1/2}$, consider  $Z_{i,0;n}(x) = W_i$ with $W_i \sim \mathcal N(0,1)$, $i=1, \ldots, n$, and $Z_{i,1;n}(x) = \sigma_{1;n}\, W_i$, $\sigma_{1;n} = 1+ n^{-1/2}$. These have constant covariance kernels $=1$ and $=\sigma_{1;n}^2$ with distance  $n^{-1/2} \, (2+ n^{-1/2})$ of order $n^{-1/2}$.
    Now consider the $p$-dimensional normal distributions $\mathcal N_p(0, \Sigma_{j})$ of  $(Z_{1,j;n}(x_1) + \epsilon_{1,1}, \ldots, Z_{1,j;n}(x_p)+ \epsilon_{1,p})$, $j=0,1$, with $\Sigma_{0} = \ind_p \ind_p^\top + \mathrm{I}_p$ and $\Sigma_{1} =   \sigma_{1;n}^2\, \ind_p \ind_p^\top + \mathrm{I}_p$, where $\ind_p =(1, \ldots, 1)^\top$. Then $\Sigma_1^{-1} = \mathrm{I}_p - \frac{\sigma_{1;n}^2}{1+p\, \sigma_{1;n}^2}\, \ind_p\, \ind_p^\top$, and for the Kullback-Leiber divergence we have that
    \begin{align}
    \KL\big(\mathcal N_p(0, \Sigma_{0})||\mathcal N_p(0, \Sigma_{1})\big) & = \frac12\, \Big(\tr\big(\Sigma_1^{-1}\, \Sigma_0 \big) - p - \log\Big(\frac{\det(\Sigma_0)}{\det(\Sigma_1)} \Big) \Big)\label{eq:multinormkl}\\
       & = \frac12\, \Big(\tr\Big(\mathrm{I}_p + \big(1- \sigma_{1;n}^2\, \frac{1+p}{1 + \sigma_{1;n}^2\, p}\big)\, \ind_p \ind_p^\top \Big) - p - \log\Big(\frac{1+p}{1 + \sigma_{1;n}^2\, p} \Big) \Big)\nonumber\\
       & = \frac12\, \big(\theta_n - \log(1+\theta_n) \tag{$ \theta_n = \frac{p}{1 + \sigma_{1;n}^2\, p}\, (1 - \sigma_{1;n}^2)$}\\ 
       & \leq c\, \theta_n^2 \lesssim n^{-1}, \tag{$n \geq 2$, for some $c>0$}
    \end{align}
as required. 
	\vspace{0.3cm}

	Finally, let us turn to the lower bound of order $(\log(n\, {p})/(n{  p}))^{\gamma/(2\gamma+1)}$. 	Since we have already shown a lower bound of order $  p^{-\gamma}$ we may assume that 
	\begin{equation}\label{eq:regimelowerbound}
		p^{- \gamma} \lesssim (\log(n\, {p})/(n{  p }))^{\gamma/(2\gamma+1)}. 
	\end{equation}

 We will use \cite[Theorem 2.7]{tsybakov2008introduction}. To specify the hypotheses functions, 
	%
 %
	for sufficiently small $c_i$, $i=0,1$ we take $N_{n,  p}= \ceil{c_0\,(np/(\log(n {p})))^{1/(2\gamma+1)}},\, h_{n,  p} = N_{n,  p}^{-1}$ and $s_{n,p}\defeq  c_1\, h_{n,  p}^{\gamma}$ (the desired lower bound for the rate) and for $\tilde L$ to be specified we let
	$$\tilde g(  x) = \tilde L (h_{n,  p}\,/\,2)^{\gamma} \exp\Big(-\big(1-x^2\big)^{-1}\Big)\one_{\{|x|< 1\} }$$
	%

    
	Then for $  l  \in \{1, \ldots, N_{n,  p} \}$ we set $Z_{i;l}(  x) = W_i\,\big( 1+ \tilde g(2\, (  x -  z_l)/h_{n,  p})\big)$ and $z_l = (l-1/2)/N_{n,  p},$ where $W_i$ are independent, standard normally distributed random variables, and $Z_{i;0}(  x) = W_i$. By setting 
    \begin{equation}\label{eq:lowerboudntechhyp}
     g_l(x)\defeq \tilde g(2\, (  x -  z_l)/h_{n,  p})\,,
  \end{equation}
	the covariance kernel of $Z_{i;l}$ can be written as $\Gamma_l(x,y) = \big(1+ g_l(x)\big)\big( 1 + g_l(y)\big)\,.$
	%
 %
	Using the chain rule one checks $\gamma$-Hölder smoothness of each $\Gamma_{l}(x,y)$ for suitable choice of $\tilde L$ (depending on $L$ and $c_0$). Further, by construction it holds that $\supp(\Gamma_l) = [l/N_{n,p}, (l+1)/N_{n,p}]^2$ with $\Gamma_l$ vanishing at the boundary points of its support. Therefore $\norm{\Gamma_{  l} - \Gamma_{   r}}_\infty\geq 2\,s_{n,  p}$ for all $  l \neq   r$, and $c_1$ sufficiently small (depending on $c_0$). 
	
	To show that $s_{n,  p}$ is a lower bound for the rate of convergence, let $\mathcal N_p(0, \Sigma_l)$ denote the joint normal distribution of the $(Z_{1;l}(x_1) + \epsilon_{1,1}, \ldots, Z_{1,l}(x_p) + \epsilon_{1,p})$, so  $\Sigma_0 = \ind_{p \times p} + \mathrm{I}_p$, and $\Sigma_l = v_l\,v_l^\top  + \mathrm{I}_p$, $  l  \in \{1, \ldots, N_{n,  p} \}$, with $ (v_l)_j  = 1 + g_l(x_j)$, $j = 1,\ldots, p$, where $g_l$ is defined in \eqref{eq:lowerboudntechhyp}. Then, similar to \eqref{eq:multinormkl}, 
    \begin{align*}
      &  \KL\big(\mathcal N_p(0, \Sigma_{l})||\mathcal N_p(0, \Sigma_{0})\big) \\
      &  = \frac12\, \Big(\tr\Big(\mathrm{I}_p - (1+p)^{-1}\, \ind_p \ind_p^\top +  v_l\,v_l^\top - (1+p)^{-1}\, (\ind_p^\top \, v_l)\,\ind_p v_l^\top\Big) - p - \log\Big(\frac{1+\norm{v}_2^2}{1 + p} \Big) \Big)\nonumber\\
     & = \frac12\, \Big(\norm {v_l}_2^2 - \frac p{p+1} - \frac{ (\ind_p^\top \, v_l)^2}{p+1} - \log\Big(\frac{1+\norm{v_l}_2^2}{1 + p} \Big) \Big)\\
     & =  \frac12\, \Big(\frac{p\norm {v_l}_2^2 - (\ind_p^\top \, v_l)^2}{p+1} + \frac{\norm {v_l}_2^2 - p}{p+1} - \log\Big(1 + \frac{\norm {v_l}_2^2 - p}{p+1} \Big) \Big).
    \end{align*}
    Now 
    	\begin{align}
		\frac{p\norm {v_l}_2^2 - (\ind_p^\top \, v_l)^2}{p+1} & = \frac p{p+1} \sum_{k = 1}^p g_l^2(x_k) - \frac1{p+1}\bigg( \sum_{k = 1}^p g_l(x_k) \bigg)^2 \leq \sum_{k = 1}^p g_l^2(x_k).\label{eq:rate1}
	\end{align}
    Further, 
    \begin{align}
		\frac{\norm v_2^2 - p}{p+1} = \frac1{p+1} \sum_{k = 1}^p g_l^2(x_k) + \frac 2{p+1} \sum_{k = 1}^p g_l(x_k), \label{eq:rate2}
	\end{align}
    which is small for $n,p$ large enough, so that
    	\begin{align}
		\frac{\norm {v_l}_2^2 - p}{p+1} - \log\Big(1 + \frac{\norm {v_l}_2^2 - p}{p+1} \Big) \Big)& \leq c\, \Big( \frac{\norm {v_l}_2^2 - p}{p+1}\Big)^2 \leq \tilde c\, p^{-1}\,\sum_{k = 1}^p g_l^2(x_k) \label{eq:secondkl}
                    %
	\end{align}
    by bounding the square of the first term on the right of \eqref{eq:rate2} by $\big(\sum_{k = 1}^p g_l^2(x_k)\big)^2 \leq 2\, p\, \sum_{k = 1}^p g_l^2(x_k)$, and by using the Schwartz-inquality on the square of the second term. 

    Let $\prob_l^{\otimes n}$ be the joint normal distribution of the observations $(Z_{1;l}(x_1) + \epsilon_{1,1}, \ldots, Z_{1,l}(x_p) + \epsilon_{1,p}), \ldots, $  $(Z_{n;l}(x_1) + \epsilon_{n,1}, \ldots, Z_{n;l}(x_p) + \epsilon_{n,p})$. We obtain
    	\begin{align*}
		\frac{1}{N_{n, p}} \sum_{ l=1}^{N_{n, p}}  \KL(\prob_l^{\otimes n} || \prob_0^{\otimes n}) 
		& = \frac{1}{N_{n, p}} \sum_{ l=1}^{N_{n, p}} n\,\KL\big(\mathcal N_p(0, \Sigma_{l})||\mathcal N_p(0, \Sigma_{0})\big) \\
		& \leq \frac{n}{N_{n, p}} \, c_1\, \sum_{k = 1}^p \, \sum_{ l=1}^{N_{n, p}} \, g_l^2(x_k) \tag{\eqref{eq:rate1} and \eqref{eq:secondkl}}\\
		&\leq \, n\, c_2\, h_{n, p}^{2\gamma+1}\sum_{k = 1}^p \, \sum_{ l=1}^{N_{n, p}} \,\,\one_{\{2\abs{{x_k}-z_l} < h_{n, p}\}} \tag{constr.~of $g_l$}\\
		&\leq \, n\, c_2\, h_{n, p}^{2\gamma+1} \, p \leq c_2 \log (np) \lesssim  \log(N_{n, p}) \\
	\end{align*}
     as required in \citet[Theorem 2.7]{tsybakov2008introduction}.
}

\end{proof} 

\appendix

\section{Proofs for Lemma \ref{lem:cov_error_decomp} and \ref{lem:rates:convergence}.}\label{app:technical}

\begin{proof}[Proof of Lemma \ref{lem:cov_error_decomp}]
Plugging the representation of the observations $Y_{i,j}$ and $\bar Y_{n,j}$ into the estimator $\covest{x}{y}{h}$ in \eqref{eqn:estimatorCovariance} we get
	\begin{align}
		\covest xyh & = \frac1{n-1}\bigg[\sum_{i=1}^n \sum_{j<k}^{p} \wjk xyh \big(Y_{i,j}Y_{i,k} - \bar Y_{n,j}\bar Y_{n,k}\big)\bigg].\nonumber\\
		& = \frac1{n-1}\sum_{i = 1}^n \sum_{j<k}^{p}\wjk xyh\bigg[ \Big(Z_{i,j}Z_{i,k} + \epsilon_{i,j}\epsilon_{i,k} + Z_{i,j}\epsilon_{i,k} + \epsilon_{i,j}Z_{i,k}\nonumber\\
		&\QQuad \Quad+ \mu(x_j)\big(Z_{i,k} + \epsilon_{i,k}\big) + \big(Z_{i,j} + \epsilon_{i,j}\big)\mu(x_k)\Big)\nonumber\\
		&\quad - \Big( \bar Z_{n,j}\bar Z_{n,k} + \bar \epsilon_{n,j}\bar \epsilon_{n,k} + \bar Z_{n,j}\bar \epsilon_{n,k} + \bar \epsilon_{n,j}\bar Z_{n,k}\nonumber\\
		&\QQuad \Quad+ \mu(x_j)\big(\bar Z_{n,k} + \bar \epsilon_{n,k}\big) + \big(\bar Z_{n,j} + \bar \epsilon_{n,j}\big)\mu(x_k)\Big)\bigg]\nonumber\\
		& = \frac1{n-1}\sum_{i = 1}^n \sum_{j<k}^{p} \wjk xyh\Big[\big(\epsilon_{i,j}\epsilon_{i,k} - \bar \epsilon_{n,j}\bar \epsilon_{n,k} \big)\nonumber\\
		& \qquad + \big(Z_{i,j}Z_{i,k} - \bar Z_{n,j}\bar Z_{n,k} \big)\nonumber\\
		& \qquad + \big(Z_{i,j}\epsilon_{i,k} -\bar Z_{n,j}\bar \epsilon_{n,k}\big) + \big(\epsilon_{i,j}Z_{i,k} - \bar \epsilon_{n,j}\bar Z_{n,k}\big)\nonumber\\
		& \qquad + \mu(x_j)\big(Z_{i,k} - \bar Z_{n,k} + \epsilon_{i,k} - \bar \epsilon_{n,k} \big) + \big(Z_{i,j} - \bar Z_{n,j} + \epsilon_{i,j} - \bar\epsilon_{n,j}\big)\mu(x_k)\Big].\nonumber
	\end{align}
	Let $U_{i,j}$ be a placeholder for $\epsilon_{i,j}$ or $Z_{i,j}$. In order to summarize the last display we use
	\[ \frac1{n-1}\sum_{i=1}^nU_{i,k}- \frac n{n-1}\bar U_{n,k} = 0 \]
	and therefore the last row vanishes. For the first three rows we use
	\[ \frac1{n-1} \sum_{i=1}^nU_{i,j}U_{i,k} - \frac1{n(n-1)} \sum_{l,r=1}^nU_{l,j}U_{r,k} = \frac1n\sum_{i=1}^n U_{i,j}U_{i,k} - \frac1{n(n-1)} \sum_{l\neq r}^n U_{l,j} U_{r,k}, \]
	for $Z_{i,j}Z_{i,k}, \epsilon_{i,j}\epsilon_{i,k}$ and $Z_{i,j}\epsilon_{i,k} $ respectively. By adding $\pm \Gamma(x_j,x_k)$ this yields the decomposition
	\begin{align*}
		\covest{x}{y}{h}  & = \sum_{j<k}^{p} \wjk xyh\Bigg[\bigg(\frac1n\sum_{i=1}^n \epsilon_{i,j}\epsilon_{i,k} - \frac1{n(n-1)} \sum_{l\neq r}^n \epsilon_{l,j} \epsilon_{r,k}\bigg)\\
		& + \frac1n\sum_{i=1}^n\big(Z_{i,j}Z_{i,k} - \Gamma({x_j},x_k) \big) - \frac1{n(n-1)}\sum_{i \neq l}^n Z_{i,j}Z_{l,k}+ \Gamma(x_j,x_k)\\
		& + \bigg(\frac1n\sum_{i=1}^n (Z_{i,j}\epsilon_{i,k} + Z_{i,k}\epsilon_{i,j})  - \frac1{n(n-1)} \sum_{l\neq r}^n (Z_{l,j}\epsilon_{r,k} + Z_{l,k}\epsilon_{r,j} )\bigg)\Bigg].
	\end{align*}
	Subtracting $\Gamma(x,y)$ yields the claim together with \ref{ass:weights:polynom}. 
\end{proof}

\begin{proof}[Proof of Lemma \ref{lem:rates:convergence}, i).]
For the bias we  use a Taylor expansion and the fact that the weights of the estimator reproduce polynomials of the certain degree $\zeta = \left\lfloor \gamma \right \rfloor$. We get for certain $\theta_{1,k}, \theta_{2,k} \in [0,1]$ s.t. $\tau_j^{(1)} \defeq x + \theta_{1,k}(x_j - x) \in [0,1]$ and $\tau_k^{(2)} \defeq y + \theta_{2,k}(x_k - y) \in [0,1]$ 
\begin{align*}\allowdisplaybreaks
	\sum_{j<k}^{ p} &\wjk xyh \Gamma(x_j, x_k) - \Gamma(x,y)	 = \sum_{j<k}^{ p} \wjk xyh\big( \Gamma(x_j, x_k) - \Gamma(x,y)\big) \tag{by \ref{ass:weights:polynom}}\\
	&=\sum_{j<k}^{ p} \wjk xyh \bigg(\sum_{\substack{| r|= 1,\\ r =(r_1,r_2) }}^{\zeta-1} \frac{\partial^{\abs{r}} \Gamma(x,y) }{(\partial x)^{r_1} (\partial y)^{r_2} }\frac{( x_{ j}- x)^{ r_1} (x_k - y)^{r_2}}{ r_1 ! r_2!}\\
	& \qquad + \sum_{\substack{| r|= \zeta\\ r =(r_1,r_2) }}\frac{\partial^{\abs{r}} \Gamma(\tau_j^{(1)}, \tau_k^{(2)})}{(\partial x)^{r_1} (\partial y)^{r_2} }  \frac{( x_{ j}- x)^{ r_1} (x_k - y)^{r_2}}{ r_1 ! r_2!} \bigg)\\
	&=\sum_{j<k}^{ p} \wjk xyh \Bigg( \sum_{|r|= \zeta} \bigg(\frac{\partial^{\abs{r}} \Gamma(x,y)}{(\partial x)^{r_1} (\partial y)^{r_2}} - \frac{\partial^{\abs{r}} \Gamma(\tau_j^{(1)}, \tau_k^{(2)})}{(\partial x)^{r_1} (\partial y)^{r_2} } \bigg) 
	\frac{( x_{ j}- x)^{ r_1} (x_k - y)^{r_2}}{ r_1 ! r_2!}\Bigg) \tag{by \ref{ass:weights:polynom}}\\
	& \leq \sum_{j<k}^{ p} \wjk xyh  \sum_{|r|= \zeta}\max(x_j-x, x_k-y)^{\gamma - \zeta}\frac{( x_{ j}- x)^{ r_1} (x_k - y)^{r_2}}{ r_1 ! r_2!}\\
	&\leq  \sum_{j<k}^{ p} \wjk xyh \max(x_j-x, x_k-y)^{\gamma} \sum_{| r|= \zeta}\frac{1}{ r !}\\
	&\leq h^\gamma \sum_{| r|= \zeta}\frac{C_1}{ r !} = \mc O\big(h^\gamma)\,. \tag{by \ref{ass:weights:sum}}
\end{align*}
\end{proof}

\begin{proof}[Proof of Lemma \ref{lem:rates:convergence} ii), second bound.]
    We rewrite the term 
\begin{align*}
	\sum_{j<k}^{p} \wjk xyh\frac1{n(n-1)}\sum_{l\neq r}^n \epsilon_{l,j} \epsilon_{r,k}
	& = \frac1{n(n-1)} \sum_{j<k}^{p}\wjk xyh \bigg[\sum_{i, l = 1}^n \epsilon_{i,j} \epsilon_{l,k}-  \sum_{i=1}^n\epsilon_{i,j}\epsilon_{i,k}\bigg]\\
	& = \frac1{n-1}\sum_{j<k}^{p}\wjk xyh \tilde X_{j,n} \tilde X_{k,n} - \frac1{n-1}E_{n,p,h}(x,y),
\end{align*}
where $E_{n,p,h}(x,y)$ is the process defined in \eqref{eqn:def:Enph} and $\tilde X_{j,n} = \sqrt n \bar X_{j,n} = n^{-1/2} \sum_{i=1}^n \epsilon_{i,j}$. By the first statement of ii) we obtain
\begin{equation}
	\expec\bigg[\sup_{(x,y)^\top \in D} \frac1{n-1} \, \absb{E_{n,p,h}(x,y)}\bigg] = \cO \Bigg(\frac{\sqrt{\log(n\,p)}}{n^{3/2}\,{p\,h}}\Bigg).
\end{equation}
Simple calculations show that $\tilde X_{j,n}\sim \subG{(\sigma^2)}$ and its Orlicz-Norm is given by $\norm{\tilde X_{j,n}}_{\psi_2} = K$, where $K>0$ is constant not depending on $n$. Defining $A\defeq \wjk xyh \ind_{j<k}$ as before, using the estimates \eqref{eqn:frobenius:A} and \eqref{eqn:operator:A}  and the Hanson-Wright inequality (Lemma \ref{lem:hanson:wright}) we obtain, analogously to \eqref{eqn:HW:application},
\begin{align*}
	\prob\bigg(\frac{(n-1)\,p\,h}{\log(n\,p) }\Big|\sum_{j<k}^{p}\wjk xyh \frac{\tilde X_{j,n}\tilde X_{k,n}}{n-1} \Big| \geq t\bigg)	& \leq 2 \exp\bigg(- \tilde c \min\Big(\frac{t^2 \log^2(n\,p)}{K^4\,\Cmax\,\Csum},\; \frac{t \log(n\,p)}{K^2 \Cmax\,\Ccard}\Big)\bigg)\\
    & \leq 2\,\exp\big( -t\log(n\,p) /C \big)\, 
\end{align*}
for $t\geq 1$ and $C \defeq K^4\Cmax\max(\Ccard,\Csum)$. The same calculations as in \eqref{eqn:disc:HW:estimate} then lead to 
\begin{equation}
	\expec\bigg[\sup_{x,y\in[0,1]}\frac1{n(n-1)} \sum_{j<k}^{p}\wjk xyh \sum_{i, l = 1}^n \epsilon_{i,j} \epsilon_{l,k} \bigg] = \cO\bigg(\frac{\log(n\,p)}{n\,{p\,h}}\bigg).
\end{equation}
\end{proof}

\begin{proof}[Proof of Lemma \ref{lem:rates:convergence} iii), second bound.]
Set
\begin{equation*}
	R_{n,p,h}(x,y) \defeq \frac1{n(n-1)} \sum_{j<k}^{p} \wjk xyh \sum_{i\neq l}^n Z_i(x_j)Z_l(x_k)
\end{equation*}
and consider the envelope $ \bs \Phi_{n} \in \R^{n(n-1)}$ with entries $\phi_{n,(i,l)}$, $i,l = 1, \ldots, n,$ with $i \neq l$  given by 
\begin{equation*}
	\frac1{\sqrt {n(n-1)}}\absA{Z_i(x)Z_j(y)} \leq \frac1{\sqrt {n(n-1)}}\big( M_i + \abs{Z_i(0)}\big)  \big( M_l + \abs{Z_l(0)}\big) . 
\end{equation*}
This leads to
\begin{align*}
	\expec\big[\norm{ \sqrt{n(n-1)}\, R_{n,p,h}}_\infty\big]  & \leq \Csum \, \expec\bigg[\sup_{x,y \in[0,1]}\Big| \sum_{i \neq l}^n \frac{Z_i(x)Z_l(y)}{\sqrt{n(n-1)}}\Big|\bigg] \\
	& \leq \Csum \, \expec\bigg[\sup_{x,y \in[0,1]}\Big| \sum_{i \neq l}^n \frac{Z_i(x)Z_l(y)}{\sqrt{n(n-1)}}\Big|^2\bigg]^{1/2} \\
	& \leq 2\,\Csum \, K_2 \, \Lambda(1)^2 \expec\big[\big( M_1 + \abs{Z_1(0)}\big)  \big( M_2 + \abs{Z_2(0)}\big) ^2\big]^{1/2} \\
	& \leq 4\,\Csum \, K_2 \, \Lambda(1)^2 \Big(2\expec\big[M_1^2\big] + 2\expec\big[Z_1(0)^2\big] \Big) < \infty.
\end{align*}
\end{proof}

\begin{proof}[Proof of Lemma \ref{lem:rates:convergence} iv), second bound.] For $ \tilde S(x,y) \defeq \sum_{j<k}^p \wjk xyh \frac1n \sum_{i\neq l}^n \frac{Z_i(x_j) \epsilon_{l,k}}{n-1} $ we shall proceed analogously as for the first term. Let 
$$\tilde S_{\mid \bs z}(x,y) \defeq \sum_{j<k}^p \wjk xyh \frac1{n(n-1)} \sum_{i\neq l}^n z_{i,j}\epsilon_{l,k} .$$ 
Again we can show that $\tilde S_{\mid \bs z}$ is a sub-Gaussian process with respect to the semi-norm $\expec[(\tilde S_{\mid \bs z}(x,y) - \tilde S_{\mid \bs z}(x^\prime, y^\prime))^2]^{1/2}$. This semi-norm is given by
\begin{align*}
	d_{\tilde S}\big((x,y)^\top, (x^\prime, y^\prime)^\top\big)^2 & = \expec[(\tilde S_{\mid \bs z}(x,y) - \tilde S_{\mid \bs z}(x^\prime, y^\prime))^2]\\
	& = \expec \bigg[ \Big( \sum_{j < k}^p \big( \wjk xyh - \wjk{x^\prime}{y^\prime}h\big) \sum_{i < l}^n \frac{z_{i,j}\,\epsilon_{l,k} }{n(n-1)} \Big)^2\bigg] \\
	& = \frac{\sigma^2}{n-1} \sum_{k=2}^{p} \bigg( \sum_{j = 1}^{k-1} \big(\wjk xyh - \wjk{x^\prime}{y^\prime}h\big) \sum_{i = 1}^n \frac{z_{i,j}}n\bigg)^2 \,,
\end{align*}
and further we have that
\begin{align*}
    \expec\big[\exp(\tilde S_{\mid \bs z}(x,y)  &- \tilde S_{\mid\bs z}(x^\prime, y^\prime))\big] \\ & = \prod_{k = 2}^p \expec \bigg[ \exp\Big( \sum_{j = 1}^{k-1} \big( \wjk xyh - \wjk{x^\prime}{y^\prime}h\big) \sum_{i = 1}^n\frac{z_{i,j}}{n}\sum_{l = 1, l \neq i}^n \frac{\epsilon_{l,k}}{n-1}\Big)\bigg] \\
    & \leq \exp\bigg( \frac{\sigma^2}{2(n-1)}\sum_{k = 2}^p\Big(\sum_{j = 1}^{k-1} \big(\wjk xyh - \wjk{x^\prime}{y^\prime}h\big) \sum_{i = 1}^n \frac{z_{i,j}}{n}\Big)^2 \bigg) \,.
\end{align*}
Therefore $\tilde S_{\mid \bs z}$ is a sub-Gaussian process with respect to the semi-norm $d_{\tilde S}$. With the same arguments as for the first part we can bound the diameter of the semi-norm by
\begin{align*}
	\diam_{\tilde S}([0,1]^2) & \leq \frac{\Clip^2\,\Ccard^3}{(n-1)\,p\,h} \bigg( \sum_{i=1}^n \frac{m_i}n\bigg)^2 \sigma^2 \defeql \tilde \Delta = \mc O \bigg( \frac{1}{n\,p\,h}\bigg)\,.
\end{align*}
From here the result follows analogously to \eqref{eq:dudley:bound}, \eqref{eq:dudley:second} and \eqref{eq:dudley:first}.
\end{proof}

\begin{lemma}\label{lem:rate_skp_subExp}
    For the independent, mean zero, sub-Gaussian errors $\epsilon_{i,j}, i = 1, \ldots, n,\, j = 1, \ldots, p$ it holds 
    \begin{align}
        \expec\big[\max_{1 \leq j < k \leq p} \frac1n\sum_{i = 1}^n \abs{\epsilon_{i,j} \epsilon_{i,k}} \big] & = \mc O \bigg( \sqrt{\frac{\log(p)}{n}} + \frac{\log(p)}{n}\bigg)\,.
    \end{align}
\end{lemma}

\begin{proof}
    By the independence we have $\epsilon_{i,j}\epsilon_{i,k} \sim \mathrm{subExp}(\sigma^4, \sigma^2)$. Together with the Jensen inequality we get
    \begin{align*}
        \expec\bigg[ \max_{j<k} \frac1n\sum_{i = 1}^n \abs{\epsilon_{i,j}\epsilon_{i,k}} \bigg] & \leq \frac1\lambda \expec \bigg[ \log\bigg( \sum_{j<k} \exp\Big(\frac\lambda n\, \sum_{i = 1}^n \abs{\epsilon_{i,j}\epsilon_{i,k}}\Big)\bigg)\bigg]\\
        & \leq \frac1\lambda \log\bigg( \expec\bigg[ \sum_{j<k}\prod_{i = 1}^n \exp\big(\frac\lambda n \abs{\epsilon_{i,j}\epsilon_{i,k}}\big)\bigg]\bigg) \\
        & = \frac1\lambda \log\bigg( \sum_{j<k}\prod_{i = 1}^n \expec\big[ \exp\big(\frac\lambda n \abs{\epsilon_{i,j}\epsilon_{i,k}}\big)\big]\bigg) \\
        & \leq \frac1\lambda \log\bigg( \sum_{j<k}\exp\Big(\frac{\sigma^4\,\lambda^2}{2\,n}\Big)\bigg) \\ 
        & = \frac1\lambda \,\log\bigg(\frac{p\,(p-1)}{2}\bigg) + \frac{\sigma^4\,\lambda}{2\,n}\,,
    \end{align*}
    for all $\abs{\lambda}/n \leq \sigma^{-2}$. If $\log(p)\lesssim n$ choosing $\lambda \simeq \sqrt{n\,\log(p)}$ gives an upper bound of order $\sqrt{\log(p)/n}$ and if $\log(p) \gtrsim$ choosing $\lambda \simeq n$ gives the bound $\log(p)/n$.
\end{proof}

\section{Proof for the asymptotic normality.}\label{ssec:proof_asymp_norm}

\begin{proof}[Proof of Theorem \ref{thm:asymp_norm}]
We make use of the functional central limit theorem \citet[Theorem 10.6]{pollard1990empirical}.
%
 For the proof let $$X_{n,i}(x,y) \defeq \frac1{\sqrt n} \sum_{j<k}^p \wjk xyh \Zi jk,\qquad \Zi jk \defeq Z_i(x_j)Z_i(x_k) - \Gamma(x_j,x_k),$$
  and 
 \[ S_n(x,y) \defeq \sum_{i=1}^n X_{n,i}(x,y), \Quad \rho_n(x,y,s,t) \defeq \bigg( \sum_{i=1}^n \expec\Big[ \abs {X_{n,i}(x,y) - X_{n,i}(s,t)}^2 \Big]\bigg)^{1/2}. \]
To apply \citet[Theorem 10.6]{pollard1990empirical} we need to check the following. 

 i). $X_{n,i}$ is manageable \citep[Definition 7.9]{pollard1990empirical} with respect to the envelope $\Phi_{n}\defeq (\phi_{n,1},\ldots, \phi_{n,n})$ with 
 \begin{align}
   \phi_{n,i} \defeq \frac{\Csum}{\sqrt n} \big( 2\,Z_i^2(0) + 2\,  M_i^2 + L\big).  \label{eq:envelope}
 \end{align}
 ii). $R(x,y,s,t) \defeq \displaystyle\lim_{n \to \infty} \expec\big[S_n(x,y)S_n(s,t)\big]$, $x,y,s,t \in [0,1]$.\\
iii). $\displaystyle \limsup_{n \to \infty} \sum_{i=1}^n \expec[\phi_{n,i}^2] < \infty.$\\
iv). $\displaystyle \lim_{n \to \infty} \sum_{i=1}^n \expec\big[ \phi_{n,i}^2 \,\ind_{\phi_{n,i} > \epsilon}\big] = 0$,  $\epsilon > 0$.\\
v). The limit 
$$\rho(x,y,s,t) \defeq \lim_{n \to \infty} \rho_n(x,y,s,t), \qquad x,y,s,t \in [0,1]\,,$$
is well-defined and for all deterministic sequences $(x_n, y_n)_{n \in \N}, (s_n, t_n)_{n\in \N} \in [0,1]^2 $ with\\ $\rho(x_n, y_n,s_n, t_n) \to 0$ it also holds $\rho_n(x_n, y_n, s_n, t_n) \to 0$.\\

\smallskip

Ad i): As in the proof of Lemma \ref{lem:rates:convergence}, iii), the random vector $ \Phi_{n}$ is an envelope of $X_n \defeq (X_{n,1}, \ldots, X_{n,n})$ since
\begin{align*}
    \absb{X_{n,i}(x,y)} & \leq \frac1{\sqrt n} \sum_{j<k}^p \abs{ \wjk xyh }\big( 2\,Z_i^2(0) + 2\,  M_i^2 + L\big) \leq \frac{\Csum}{\sqrt{n}} \big( 2\,Z_i^2(0) + 2\,  M_i^2 + L\big)\,,
\end{align*}
by using \ref{ass:weights:sum}, where we may assume $\Csum \geq 1$. We make use of \citet[Lemma 3]{berger2023dense}. We need to show that there exist constants $K_1, K_2$ and $b \in \R^+$ such that for all $(x,y), (x^\prime, y^\prime) \in [0,1]^2$ it holds
\begin{align}\label{eq:condmanagable2}
    \normm{ \vecTwo xy - \vecTwo{x^\prime}{y^\prime}}_\infty  \leq K_1 \epsilon^b & \Rightarrow \absb{X_{n,i}(x,y) - X_{n,i}(x^\prime, y^\prime)} \leq K_2\,\epsilon\, \phi_{n,i}, \quad \forall \; i = 1,\ldots, n\,.
\end{align}

Since $X_{n,i}(x,y) = X_{n,i}(y,x)$, we can assume that $(x,y), (x^\prime, y^\prime) \in T$. 


We distinguish the cases $\epsilon \geq h^\gamma$ and $\epsilon < h^\gamma$. For the first case $\epsilon \geq h^\gamma$ consider
\begin{align*}
    \frac1{\sqrt n}\abss{ \sum_{j<k}^p \big( \wjk xyh &- \wjk{x^\prime}{y^\prime}h \big) \Zi jk }  \leq \frac1{\sqrt n}\abss{ \sum_{j<k}^p \wjk xyh \Zi jk - Z_i^{\otimes2}(x,y)} \\
    & + \frac1{\sqrt n}\absb{Z_i^{\otimes2}(x,y) - Z_i^{\otimes2}(x^\prime,y^\prime)} + \frac1{\sqrt n}\abss{Z_i^{\otimes2}(x^\prime, y^\prime) - \sum_{j<k}^p \wjk{x^\prime}{y^\prime}h \Zi jk},
\end{align*}
we shall check \eqref{eq:condmanagable2} for $K_1 = 1, K_2 = 3$ and $ b =1/\gamma$. Take $(x,y)^\top, (x^\prime, y^\prime)^\top$ such that $\max\{\abs{x-x^\prime}, \abs{y - y^\prime}\} \leq \epsilon ^{1/\gamma}$. By Assumption \ref{ass:distribution} the second term can be estimated by
\begin{align*}
    \frac1{\sqrt n}\absb{Z_i^{\otimes2}(x,y) - Z_i^{\otimes2}(x^\prime,y^\prime)} & \leq \frac{\big( 2\,Z_i^2(0) + 2\,  M_i^2 + L\big)}{\sqrt n}\normm{\vecTwo xy - \vecTwo{x^\prime}{y^\prime} }_\infty^\gamma \leq \phi_{n,i}\epsilon. 
\end{align*}
For the first and similarly the third term one gets that  
\begin{align*}
    \frac1{\sqrt n}\abss{ \sum_{j<k}^p \wjk xyh \Zi jk - Z_i^{\otimes2}(x,y)} & \leq \frac1{\sqrt n}\sum_{j<k}^p \absb{ \wjk xyh } \absb{ \Zi jk - Z_i^{\otimes2}(x,y)}\\
    & \leq \frac{\Csum\, \big(\big( 2\,Z_i^2(0) + 2\,  M_i^2 + L\big)}{\sqrt n}\, h^\gamma  \leq \phi_{n,i}\, \epsilon,
\end{align*}
since the weights vanish for $\max\{\abs{x_j -x}, \abs{x_k -y}\} \geq h$. Adding the three terms concludes \eqref{eq:condmanagable2} for the first case $\epsilon \geq h^\gamma$ . 

For the second case $\epsilon < h^\gamma$,  again take $\max\{\abs{x-x^\prime}, \abs{y - y^\prime}\} \leq \epsilon ^{1/\gamma}$. By \ref{ass:weights:lipschitz} we get
\begin{align*}
    \frac1{\sqrt n}\abss{ \sum_{j<k}^p \big( \wjk xyh - \wjk{x^\prime}{y^\prime}h \big) \Zi jk }  & \leq \sum_{j<k}^p \absb{\wjk xyh - \wjk{x^\prime}{y^\prime}h}\,\phi_{n,i}  \\
    & \leq \frac{2\Clip\, \Ccard}{h}\normm{\vecTwo xy -\vecTwo{x^\prime}{y^\prime} }_\infty\,\phi_{n,i}  \\
    & \leq \frac{2\Clip\,\Ccard}h\, \epsilon^{1-1/\gamma} \,\phi_{n,i}\,,
\end{align*}
which yields \eqref{eq:condmanagable2} in the second case. \citet[Lemma 3]{berger2023dense} then yields manageability. 
%

\smallskip

Ad ii). 
\begin{align*}
    \expec[ S_n(x,y) S_n(x^\prime, y^\prime)] & = \frac1n \sum_{i,l = 1}^n\sum_{j<k}^p \sum_{r<s}^p \wjk xyh w_{r,s}(x^\prime, y^\prime;h) \expec\big[\Zi jk Z_{l;r,s}^{\otimes 2}\big]\\
    & = \sum_{j<k}^p \sum_{r<s}^p \wjk xyh w_{r,s}(x^\prime, y^\prime;h) R(x_j,x_k, x_r, x_s).
\end{align*}

By Assumption \ref{ass:asymp_norm}, 
$R\in \mc H_{D^2}(\zeta, \tilde L)$ and therefore we get by the same calculations as for the bias in the supplementary appendix \ref{app:technical} 
\begin{align*}
    \sup_{x,y,x^\prime,y^\prime}\abss{\sum_{j<k}^p \sum_{r<s}^p \wjk xyh w_{r,s}(x^\prime, y^\prime;h) R(x_j,x_k, x_r, x_s) - R(x,y,x^\prime, y^\prime)} = \mc O(h^{\zeta})\,.
\end{align*}


\smallskip

Ad iii). Follows since $Z$ and $M$ have finite fourth moments. 

\smallskip

Ad iv). Follows by by the finite fourth moments of $Z$ and $M$ and the dominated convergence theorem. 

\smallskip

Ad v). 
 \begin{align*}
     \rho_n^2(x,y,x^\prime, y^\prime) & = \sum_{j<k}^p \sum_{r<s}^p \big(\wjk xyh - \wjk{x^\prime}{y^\prime}h\big)\big(w_{r,s}(x,y,h)-w_{r,s}(x^\prime, y^\prime;h)\big)\expec \Big[ Z_{1;j,k}^{\otimes 2}Z_{1;r,s}^{\otimes2} \Big]\\
     & =  \sum_{j<k}^p \sum_{r<s}^p \Big(  \wjk xyh w_{r,s}(x,y;h)  - \wjk xyh w_{r,s}(x^\prime,y^\prime;h) \\
     & \QQuad - \wjk{x^\prime}{y^\prime}h w_{r,s}(x,y;h) + \wjk{x^\prime}{y^\prime}h w_{r,s}(x^\prime, y^\prime;h)\Big) R(x_j,x_k,x_r,x_s) \Big)\\
     & \stackrel{n \to \infty}{\longrightarrow} R(x,y,x,y) - 2R(x,y,x^\prime, y^\prime) + R(x^\prime, y^\prime, x^\prime, y^\prime), 
 \end{align*}
 by the same argument as for ii). Therefore the limit is well defined in $\R$ for all $x,y,x^\prime, y^\prime \in [0,1]$. Given the deterministic sequences $(x_n,y_n)$ and $(x^\prime_n, y^\prime_n)$ such that $\rho(x_n,y_nx^\prime_n,y^\prime_n) \to 0, n \to \infty,$ we get
 \begin{align*}
     0 \leq \rho_n(x_n,y_n,x_n^\prime,y_n^\prime) & \leq \absb{\rho_n(x_n,y_n,x_n^\prime,y_n^\prime) - \rho(x_n,y_n,x_n^\prime,y_n^\prime)} + \absb{ \rho(x_n,y_n,x_n^\prime,y_n^\prime)} \\
     &\leq \sup_{x,y,x^\prime,y^\prime} \absb{\rho_n(x,y,x^\prime,y^\prime) - \rho(x,y,x^\prime,y^\prime)} + \absb{ \rho(x_n,y_n,x_n^\prime,y_n^\prime)} \stackrel{n \to \infty}{\longrightarrow} 0,
 \end{align*}
 since the convergence in ii) holds uniformly and hence similarly also the convergence in the first calculation.

\end{proof}

\section{Proofs related to restricted local polynomial estimation.}
\label{sec:proof:lemma:locpol:weights}

%
From Example \ref{ex:localpolynomial} recall the notation
\begin{align*}
	U_m(u_1,u_2)\defeq \big(1, P_{1}(u_1, u_2), \ldots, P_{m}(u_1,u_2)\big)^\top,\quad u_1,u_2\in[0,1],
\end{align*}
where for $l=1,\ldots, m$ we set 
\begin{equation*}
    P_l(u_1, u_2)\defeq\bigg(\frac{u_1^l}{l!}, \frac{u_1^{l-1}u_2}{(l-1)!}, \frac{u_1^{l-2}u_2^2}{(l-2)!2!},\ldots, \frac{u_2^l}{l!}\bigg), \quad u_1,u_2 \in [0,1].
\end{equation*}
Moreover for a non-negative kernel function $K \colon \R^2 \to [0, \infty)$ and a bandwidth $h >0$ we set $K_{h}(x_1,x_2) \defeq K(x_1/h, x_2/h)$ and $U_{m,h}(u_1, u_2) \defeq U_m(u_1/h, u_2/h)$. 

Given an order $m \in \N$ and observations $((x_j, x_k), z_{j,k}), 1 \leq j<k \leq p$, we consider 
\begin{align} \label{eq:locpol}
	\widehat{\mathrm{LP}}(x, y) \defeq \argmin_{\bs \vartheta \in \R^{N_m}} \sum\limits_{j<k}^{p} \bigg(z_{j,k} - \bs \vartheta^\top\, \Uh{x_j-x}{x_k-y} \bigg)^2 \Kh{x_j - x}{x_k - y} \,, \quad 0 \leq x \leq y \leq 1\,,
\end{align}
and define the local polynomial estimator of the target function with order $m$ at $(x,y)$ as
\begin{align}\label{eq:locpolest}
	\widehat{\mathrm{LP}}_1(x,y) \defeq \left \{\begin{array}{cl} \big(\widehat{\mathrm{LP}}( x,y )\big)^\top U_m(0,0), & x \leq y,\\
    \big(\widehat{\mathrm{LP}}(y,x)\big)^\top U_m(0,0), & \text{otherwise},
    \end{array}\right. \, \quad \text{for}\;(x,y)^\top \in [0,1]^2 \, ,
\end{align}
since $U_m(0,0)$ is the first unit vector in $N_m$ dimensions. 

We require the following more restrictive design assumption. 
\begin{assumption}[Design]\label{ass:designdensity}
	Let the points $x_{1},\ldots,x_{p}$ be given by 
	the equations
	\begin{equation*}
		\int_0^{x_{j}}f(t) \, \dx t=\frac{j-0.5}{p} \,, \qquad j=1,\ldots,p \,,	
	\end{equation*}
	where $f\colon[0,1]\to \R$ is a Lipschitz continuous density that is bounded by $0<f_{\min} \leq f(t) \leq f_{\max} < \infty$ for all $t \in [0,1]$. 
\end{assumption}

In \citet{berger2023dense} it is shown that Assumption \ref{ass:designdensity} implies Assumption \ref{ass:design:localization}. {\color{black}  Note that the assumption only asserts the existence and not the knowledge of such a function $f$. For the equidistant design with $x_j = (j - 0.5)/p, j = 1, \ldots, p,$ choose $f \equiv 1$.}

\begin{lemma}\label{lemma:locpol:weights}
	Let the kernel function $K\colon \R^2 \to [0, \infty)$ have compact support in $[-1,1]^2$ 
	 and satisfy 
	\begin{align}
		K_{\min}\one_{\{- \Delta \leq u_1,u_2 \leq  \Delta \}} \leq K(u_1, u_2) \leq K_{\max} \qquad \text{for all } u_1,u_2 \in \R\,,	\label{eqn:kernel}
	\end{align}
	for a constants $\Delta\in \R_+$, $ K_{\min}, K_{\max} > 0$. Then under Assumption \ref{ass:designdensity} there exist a sufficiently large $p_0 \in \N$ and a sufficiently small  $h_0 >0$ such that for all $p\geq  p_0$ and $h\in(c/ p,h_0]$, where $c >0$ is a sufficiently large constant, the local polynomial estimator in \eqref{eq:locpolest} with any order $m \in \N$ is unique. Moreover, the corresponding weights in \eqref{eq:weights:locpol} satisfy Assumption \ref{ass:weights} with $\gamma = m$.
\end{lemma}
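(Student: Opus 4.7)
My plan is to write the weights $\wjk xyh$ explicitly as the first row of the normal-equations solution for \eqref{eq:locpol}, and then to reduce all four properties (W1)--(W4) to a single analytic fact: uniform invertibility of an appropriately normalised design matrix. Concretely, I would introduce
\begin{align*}
 B(x,y;h) \defeq (p\,h)^{-2}\sum_{j<k}^p \Kh{x_j-x}{x_k-y}\,\Uh{x_j-x}{x_k-y}\,\Uh{x_j-x}{x_k-y}^\top,
\end{align*}
so that, as soon as $B$ is invertible, the weights read
\begin{align*}
 \wjk xyh = (p\,h)^{-2}\, e_1^\top B(x,y;h)^{-1}\,\Uh{x_j-x}{x_k-y}\,\Kh{x_j-x}{x_k-y},
\end{align*}
which in particular gives uniqueness of the local polynomial fit.

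With this representation in hand, (W2) is immediate from $\supp K \subseteq [-1,1]^2$. For (W1), I would invoke the usual reproducing property: plugging $z_{j,k} = Q(x_j,x_k)$ for any polynomial $Q$ of total degree at most $m$ into \eqref{eq:locpol} returns $Q$ exactly, because such a $Q$ lies in the linear span of the entries of $U_m((\cdot-x)/h,(\cdot-y)/h)$, i.e.~in the model space. Specialising to $Q\equiv 1$ and to $Q(u,v) = (u-x)^{r_1}(v-y)^{r_2}$ with $r_1+r_2\leq m$ then produces the two displayed identities of (W1) with $\zeta = m$.

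The core analytic step is to prove the uniform lower bound
\begin{align*}
 \inf_{(x,y)\in T}\,\inf_{h\in(c/p,h_0]}\,\lambda_{\min}\bigl(B(x,y;h)\bigr)\ \geq\ \lambda_0\ >\ 0,\qquad p\geq p_0.
\end{align*}
Under Assumption \ref{ass:designdensity}, $B(x,y;h)$ is a bivariate Riemann sum with grid spacing of order $1/p$ for the population matrix
\begin{align*}
 B_0(x,y;h) \defeq \iint_{\mathcal A(x,y;h)} K(u,v)\,U_m(u,v)U_m(u,v)^\top\, f(x+hu)\,f(y+hv)\,\dx u\,\dx v,
\end{align*}
where $\mathcal A(x,y;h) = \{(u,v)\in[-1,1]^2 \mid x+hu < y+hv\}$ encodes the restriction $j<k$. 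Lipschitz continuity of $f$, boundedness of $K$ and smoothness of $U_m$ on $\supp K$ would give a Riemann-sum error of order $1/(p\,h) = \oop 1$ once $c$ is chosen large enough. Positive definiteness of $B_0$ then follows from $f\geq f_{\min}$, the kernel lower bound \eqref{eqn:kernel}, and the fact that $\mathcal A(x,y;h)\cap[-\Delta,\Delta]^2$ has two-dimensional Lebesgue measure bounded below uniformly in $(x,y)\in T$; the worst case is the diagonal $x=y$, where this set degenerates to the triangle $\{u<v\}\cap[-\Delta,\Delta]^2$, which still has positive area. Since the entries of $U_m$ span $\R^{N_m}$ on any open set of positive measure, these ingredients combine to give the eigenvalue bound.

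I expect the uniformity near the diagonal of $T$ to be the main obstacle: there the effective integration region degenerates to a triangle, and one must verify that the residual from the $j<k$ restriction along grid cells crossing $\{x=y\}$ is negligible. I would handle this by showing that the number of such boundary index pairs is $\mc O(p\,h)$, so their contribution to $B(x,y;h) - B_0(x,y;h)$ is $\mc O(1/(p\,h))$ after the $(p\,h)^{-2}$ normalisation, and then absorbing it by choosing $c$ large enough. Once the eigenvalue bound is established, (W3) is immediate from the displayed formula, since $\norm{\Uh{u}{v}}$ and $\Kh{u}{v}$ are uniformly bounded on $\supp K_h$, giving $\abs{\wjk xyh} \leq \Cmax\,(p\,h)^{-2}$. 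For (W4) I would use the resolvent identity $B(x,y)^{-1} - B(x',y')^{-1} = B(x,y)^{-1}\bigl[B(x',y') - B(x,y)\bigr]B(x',y')^{-1}$ together with the $\mc O(1/h)$-Lipschitz continuity of $K_h$ and $U_{m,h}$ in $(x,y)$ and the cardinality bound of Assumption \ref{ass:design:localization} (which, as noted in the paper, follows from Assumption \ref{ass:designdensity} via \citet{berger2023dense}); this yields the required factor $\bigl(\max(\abs{x-x'},\abs{y-y'})/h\bigr)\wedge 1$, the saturation at $1$ reducing to (W3) when the two base points are more than $h$ apart.
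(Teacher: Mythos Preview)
Your approach is correct and mirrors the paper's: write the weights via the normal-equations matrix $B(x,y;h)$, establish a uniform lower bound on $\lambda_{\min}(B)$ by Riemann-sum comparison, and then read off (W1)--(W4) exactly as you outline (polynomial reproduction, kernel support, the eigenvalue bound, and the resolvent identity, respectively). Two small points deserve tightening. First, boundedness of $K$ alone does not give a Riemann-sum error of order $1/(ph)$; the paper avoids this by first lower-bounding $K$ by $K_{\min}\one_{[-\Delta,\Delta]^2}$, after which the integrand in the comparison is the polynomial $g_v(u,v)=(v^\top U_m(u,v))^2$ and the Riemann-sum estimate goes through cleanly (Lipschitz continuity of $K$ is only invoked later, for (W4)). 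Second, you identify the diagonal $x=y$ as the sole worst case for the area of $\mathcal A(x,y;h)\cap[-\Delta,\Delta]^2$, but near the endpoints of $[0,1]$ the design-point window is further truncated by the constraint $x_j,x_k\in[0,1]$; the paper handles this explicitly by partitioning $[0,1]$ into an interior strip $I_0$ and a boundary strip $I_1$ and checking that in each of the three resulting regions $S_1,S_2,S_3$ the effective integration domain still contains a fixed triangle $E^i$ of positive area on which the monomials are linearly independent.
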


Let us proceed to preparations of the proof of Lemma \ref{lemma:locpol:weights}.  

For $x\leq y$, $x,y \in [0,1]$ 
set
\begin{equation}
	B_{p,h}(x,y) \defeq \frac1{(p\,h)^2}\sum_{j<k}^{p} 
	\Uh{x_j-x}{x_k-y}
	U_h^\top\vecTwo{x_j-x}{x_k-y} \Kh{x_j-x}{x_k-y} \qquad \in\R^{N_{m}\times N_{m}} \label{eq:Bp}
\end{equation}
and
\begin{equation*}
	\bs a_{p,h}(x,y)\defeq \frac1{(p\,h)^2} \sum_{j<k}^{p} \Uh{x_j-x}{x_k-y}\Kh{x_j-x}{x_k-y} \, z_{j,k} \qquad \in \R^{N_{m}} ,
\end{equation*}
then \eqref{eq:locpol} is the solution to the weighted least squares problem
\begin{equation*}
	\widehat{\mathrm{LP}}(x,y) = \argmin_{\bs \vartheta \in \R^{N_m} } \big( - 2\bs \vartheta^\top \bs a_{p,h}(x,y) + \bs \vartheta^\top B_{p,h}(x,y) \bs \vartheta \big).
\end{equation*}
The solution is determined by the normal equations
\begin{equation*}
	\bs a_{p,h}(x,y) = B_{p,h}(x,y)\bs \vartheta.
\end{equation*}
In particular, if $B_{p, h}(x, y)$ is positive definite, the solution in \eqref{eq:locpol} is uniquely determined and we obtain
\begin{equation*}
	\widehat{\mathrm{LP}}_1(x,y) =  \sum_{j<k}^{p} \wjk xyh \, z_{j,k}
\end{equation*}
with (now stating both cases)
\begin{align} \label{eq:weights:locpol}
	\wjk xyh \defeq \frac1{(p\,h)^2} U^\top\vecTwo 00 \cdot\left\{ \begin{array}{cl} B_{p,h}^{-1}(x,y) \, \Uh{x_j-x}{x_k-y}\,  \Kh{x_j-x}{x_k-y},\; \text{for}\; x\leq y,\\
		B_{p,h}^{-1}(y,x) \, \Uh{x_j-y}{x_k-x}\,  \Kh{x_j-y}{x_k-x},\; \text{for}\; x> y,
	\end{array}\right.
\end{align}
so that the local polynomial estimator is a linear estimator. \\

The more challenging part is to prove (LP1) now for a sufficiently large number $p$ of design points and an uniformly choice of the bandwidth $h$. 

\begin{lemma}\label{lemma:proof:(LP1)}
	Suppose that the kernel $K$ suffices \eqref{eqn:kernel} in Lemma \ref{lemma:locpol:weights} and Assumption \ref{ass:designdensity} is satisfied. Then there exist a sufficiently large $p_0 \in \N$ and a sufficiently small $h_0 \in \R_+$ such that for all $p\geq p_0$ and $h\in(c/p, h_0]$, where $c \in \R_+$ is a sufficiently large constant, the smallest eigenvalues $\lambda_{\min} \big(B_{p, h}(x, y)\big)$ of the matrices $B_{p, h}(x,y)$, which are given in \eqref{eq:Bp}, are bounded below by a universal positive constant $\lambda_0 >0$ for any $x, y\in [0,1]$ with $x<y$. 
\end{lemma}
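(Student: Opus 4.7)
The plan is to approximate the discrete matrix $B_{p,h}(x,y)$ by a continuous integral analogue, show that the analogue is uniformly positive definite, and control the discretisation error. Concretely, I would introduce
\begin{equation*}
    \tilde B_h(x,y) \defeq \int_0^1\int_0^1 \one_{\{s<t\}}\, \Uh{s-x}{t-y}\,\Uh{s-x}{t-y}^\top \Kh{s-x}{t-y}\, f(s)\,f(t)\,\dx s\,\dx t,
\end{equation*}
which, after the substitution $u = (s-x)/h$, $v = (t-y)/h$, becomes an integral over a subset of $[-1,1]^2$ with an integrand that is Lipschitz of constant $\mc O(h^{-1})$ in $(s,t)$.

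First I would show $\normb{B_{p,h}(x,y) - \tilde B_h(x,y)}_{\mathrm{op}} = \mc O((p\,h)^{-1})$ uniformly in $(x,y) \in T$ and $h \in (0, h_0]$. This is a standard Riemann-sum estimate: Assumption \ref{ass:designdensity} yields $x_{j+1} - x_j = 1/(p\,f(x_j)) + \mc O(p^{-2})$ uniformly in $j$, so after comparing the discrete weights $1/(p\,h)^2$ with the continuous mass element $f(s)f(t)\,\dx s\,\dx t$ on the support of the integrand and summing the $\mc O((p\,h)^2)$ non-zero terms, the Lipschitz-type error is of order $(p\,h)^{-1}$. For $h > c/p$ with $c$ sufficiently large this error stays below any prescribed threshold.

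Next I would prove $\tilde B_h(x,y) \succeq \lambda_0\,\mathrm{I}_{N_m}$ uniformly in $(x,y) \in T$ and $h \in (0, h_0]$. Using $K \geq K_{\min}\one_{[-\Delta,\Delta]^2}$ and $f \geq f_{\min}$, for any unit vector $\xi \in \R^{N_m}$,
\begin{equation*}
    \xi^\top \tilde B_h(x,y)\,\xi \;\geq\; f_{\min}^2\,K_{\min} \int\!\!\int_{R(x,y,h)} \big(\xi^\top U(u,v)\big)^2\,\dx u\,\dx v,
\end{equation*}
where $R(x,y,h) = \{(u,v) \in [-\Delta,\Delta]^2 : x+hu,\, y+hv \in [0,1],\ u<v+(y-x)/h\}$. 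Since $y \geq x$ the region always contains the portion of the open upper triangle $\{-\Delta \leq u < v \leq \Delta\}$ compatible with the box constraints, hence has positive two-dimensional Lebesgue measure. Because a non-zero polynomial $\xi^\top U(u,v)$ of total degree at most $m$ cannot vanish on a set of positive measure, the integral is strictly positive for each fixed tuple $(x,y,h,\xi)$, and a continuity-and-compactness argument on the parameter space then delivers the uniform lower bound $\lambda_0 > 0$.

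The main obstacle will be the uniformity of this lower bound across boundary configurations of $(x,y)$: when $x$ is within distance $h\,\Delta$ of $0$ or $y$ within distance $h\,\Delta$ of $1$, the shape of $R(x,y,h)$ depends jointly on the ratios $x/h$, $(1-y)/h$, $(y-x)/h$, each of which can range over $[0,\infty]$ as $h \downarrow 0$. A careful case distinction (interior, edge, corner, and near-diagonal), together with a one-point compactification of these unbounded ratios and continuity of the Gram integral in them, is needed to extract a single constant $\lambda_0$ independent of all parameters. Combining the two steps then yields $\lambda_{\min}(B_{p,h}(x,y)) \geq \lambda_0\,f_{\min}^2\,K_{\min}/2$ for $p \geq p_0$, $h \in (c/p, h_0]$, and $(x,y) \in T$ with $x < y$.
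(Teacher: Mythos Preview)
Your two-step strategy (approximate $B_{p,h}$ by the integral $\tilde B_h$, then show $\tilde B_h$ is uniformly positive definite) is essentially the paper's approach, and the Riemann-sum error bound $\mc O((ph)^{-1})$ is correct. There is one concrete gap: the claim that $R(x,y,h)$ always contains a \emph{positive-measure} portion of the upper triangle $\{-\Delta\le u<v\le\Delta\}$ compatible with the box constraints fails in the corner configuration $x$ near $0$, $y$ near $1$. There the box constraints force $u\in[-x/h,\Delta]\approx[0,\Delta]$ and $v\in[-\Delta,(1-y)/h]\approx[-\Delta,0]$, so $\{u<v\}$ intersected with that box is empty. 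The region $R$ still has area $\approx\Delta^2$ in that case---because $(y-x)/h$ is then large and the constraint $u<v+(y-x)/h$ is vacuous---but the positive measure does not come from the upper triangle. Your compactification argument over $(x/h,(1-y)/h,(y-x)/h)\in[0,\infty]^3$ would still go through once this is accounted for; only the one-sentence shortcut via the upper triangle has to be replaced.

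The paper avoids the compactness argument by a finite case distinction. It fixes three reference regions $E^1,E^2,E^3\subset[-\Delta,\Delta]^2$ (both rescaled coordinates non-negative; mixed signs; both non-positive), partitions $T$ into $S_1,S_2,S_3$ according to which of $x,y$ lie within roughly $\Delta h$ of the right endpoint $1$, and for $(x,y)\in S_i$ bounds $\xi^\top B_{p,h}(x,y)\,\xi$ from below by $K_{\min}f_{\min}^2$ times a Riemann sum over design pairs whose rescaled positions fall in $E^i$. Since each $E^i$ is a \emph{fixed} set independent of $(x,y,h)$, the constant $\lambda_i=f_{\min}^2K_{\min}\inf_{\|\xi\|_2=1}\int_{E^i}(\xi^\top U)^2>0$ follows directly from the non-vanishing of polynomials on sets of positive measure, and the Riemann-sum error is $\mc O((ph)^{-1})$ by the same Lipschitz estimate you invoke. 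Your route is more conceptual and also works once repaired; the paper's is more explicit and sidesteps the limiting boundary behaviour entirely.
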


An immediate consequence of Lemma \ref{lemma:proof:(LP1)} is the invertibility of $B_{p, h}(x,y)$ for all $p\geq p_0$, $h\in( c/ p, h_0]$ and $ x<y, x, y \in [0,1]$, and hence also the uniqueness of the local polynomial estimator for these $p$ and $ h$. In \citet[Lemma 1.5]{tsybakov2008introduction} the lower bound for the smallest eigenvalues has only be shown for a fixed sequence $h_p$ (for $d=1$) of bandwidths which satisfies $h_p \to 0$ and $p \, h_p \to \infty$. In contrast, we allow an uniformly choice of $h$ which results, in particular, in the findings of Section \ref{sec:rate_cov_estimation}.

\begin{proof}[Proof of Lemma \ref{lemma:proof:(LP1)}] 
	In the following let $ v \in \R^{N_{m}}$. We show that there exist a sufficiently large $ p_0 \in \N$ and a component wise sufficiently small $ h_0\in\R_+$ such that the estimate 
	\begin{equation}\label{eq:inf:Bp}
		\inf_{ p\geq  p_0} \inf_{ h\in( c/ p,  h_0]} \inf_{ \substack{x,y \in [0,1]\\ x<y}} \inf_{\norm{ v}_2=1}  v^\top B_{ p,  h}( x,y) \,  v \geq \lambda_0 
	\end{equation}
	is satisfied. Then we obtain for these choices of $ p$ and $ h$, and any $ x,y \in [0,1]$ also
	\begin{align*}
		\lambda_{\min}\big(B_{ p,  h}(x,y)\big) &= e_{\min}^\top \big(B_{ p,  h}(x,y)\big) B_{ p,  h}(x,y) \,  e_{\min}\big(B_{ p,  h}(x,y)\big) \geq \inf_{\norm{ v}_2=1}  v^\top B_{ p,  h}(x,y) \, v \geq \lambda_0\,,
	\end{align*} 
	where $ e_{\min}\big(B_{ p,  h}(x,y)\big) \in \R^{m+1}$ is a normalised eigenvector of $\lambda_{\min}\big(B_{ p,  h}(x,y)\big)$.
    Let $E^1\defeq \{(x,y)^\top \in \R^2 \mid x,y\in [0,\Delta), x \leq y\}, E^2 \defeq \{(x,y)^\top \in \R^2 \mid x\in(-\Delta, 0], y\in [0,\Delta) \}$ and $E^3 \defeq \{(x,y)^\top \in \R^2 \mid x,y\in (-\Delta,0], x \leq y\}$, where $\Delta \in \R^+$ is given in \eqref{eqn:kernel}. We set
	\begin{equation*}
		\lambda_i({v}) \defeq f_{\min}^2 \, K_{\min} \int_{E^i} \big({v}^\top U_m(\bs z)\big)^2\dx \bs z \,, \quad \quad \lambda_i \defeq \inf_{\norm{v}_2 =1} \lambda_i({v})
	\end{equation*}
	for all $i=1,2,3$. Applying \citet[Lemma 1]{tsybakov2008introduction} with $K(\bs z) = \one_{E^i}(\bs z), \bs z \in \R^2,$ leads to $\lambda_i({v})\geq \lambda_i >0$.
	Therefore we find a $\lambda_0>0$ such that $\min(\lambda_1, \lambda_2, \lambda_3)>\lambda_0>0$, e.g. $\lambda_0\defeq \min(\lambda_1, \lambda_2, \lambda_3)/2$. Now we want to specify  a partition $S_1\,\cup\, S_2 \,\cup\, S_3= \{ (x,y)^\top \mid 0 \leq x \leq y \leq 1\}$ and functions $A^{(1)}_{p,h}(x,y;v),A^{(2)}_{p,h}(x,y;v)$ and $A^{(3)}_{p,h}(x,y;v)$ such that 
	\begin{align}\label{eq:vBv:A}
	{v}^\top B_{ p,  h}(x,y) \, {v} \geq A^{(i)}_{ p, h}(x,y;v)\,,\quad  (x,y)^\top \in S_i \,,\quad i =1,2,3,
	\end{align} 
	and
	\begin{align}\label{eq:glm:vBv}
		\sup_{(x,y)^\top\in S_i} \sup_{\norm{v}_2 =1} \big|A^{(i)}_{ p, h}( x,y;v) - \lambda_i({v})\big| \leq\frac {c_1}{p\,h}\,,\quad i =1,2,3, 
	\end{align}
	hold true for a positive constant $c_1>0$. Consider $ h\in( c/ p,  h_0]$ with $c\defeq c_1/(\min(\lambda_1, \lambda_2, \lambda_3)-\lambda_0)$ yields $c_1/(p\,h) < c_1/c \leq \lambda_i-\lambda_0$ for all $i =1,2,3$. Hence, it follows by \eqref{eq:vBv:A} and \eqref{eq:glm:vBv} that
		\begin{align*}
		\inf_{(x,y)^\top\in S_i} \inf_{\norm{v}_2 =1} {v}^\top B_{ p,  h}(x,y) \, {v}&=\inf_{(x,y)^\top\in S_i} \inf_{\norm{v}_2 =1}\big(\lambda_i({v})+{v}^\top B_{ p,  h}(x,y) \,{v}-\lambda_i({v})\big)\\
		&\geq\lambda_i+\inf_{(x,y)^\top\in S_i} \inf_{\norm{v}_2 =1} \big(A^{(i)}_{p,h}(x,y;v)-\lambda_i({v})\big)\\
		&=\lambda_i-\sup_{(x,y)^\top \in S_i} \sup_{\norm{v}_2 =1} \big(\lambda_i({v})-A^{(i)}_{p,h}(x,y;v)\big)\\
		&\geq\lambda_i-\sup_{(x,y)^\top \in S_i} \sup_{\norm{v}_2 =1} \big|A^{(i)}_{p,h}(x,y;v)-\lambda_i({v})\big|\\
		&\geq\lambda_i-\frac {c_1}{p\,h} \geq \lambda_0\,,
	\end{align*}
	which leads to \eqref{eq:inf:Bp} because of
	\begin{align*}
		\inf_{{p}\geq {p}_0} \inf_{ h\in(c/{p}, h_0]}& \inf_{0 \leq x \leq y \leq 1} \inf_{\norm{v}_2 =1} {v}^\top B_{ p,  h}(x,y) \, {v} \\
		&= \inf_{{p}\geq {p}_0}\inf_{ h\in(c/{p}, h_0]}\min_{i=1,2,3} \Big(\inf_{(x,y)^\top\in S_i} \inf_{\norm{v}_2 =1} {v}^\top B_{ p,  h}(x,y) \, {v} \Big) \geq \lambda_0 \,.
	\end{align*}

	\medskip
	
	Next we show \eqref{eq:vBv:A} and \eqref{eq:glm:vBv}. Let $I_0 \defeq \big[0,1-\Delta h - 1/(2f_{\min} \,p) \big]$ and $I_1 \defeq \big[1-\Delta h - 1/(2f_{\min} \,p),1 \big]$. We define $S_1\defeq \{ (x,y)^\top \in I_0^2 \mid  x \leq y\},\, S_2 \defeq \{ (x,y)^\top \in I_0 \times I_1\}$ and $ S_3
 \defeq \{ (x,y)^\top \in I_1^2 \mid x \leq y\}$. It is clear that $\bigcup_{i=1}^{3}S_i=\{(x,y)^\top \mid 0 \leq x \leq y \leq 1\}$. Define $\tilde x_j\defeq (x_j-x)/h$, $x \in [0,1]$ and $\tilde y_j \defeq (x_j - y)/h$ for all $1 \leq j \leq p$, where $x_1,\ldots,x_{p}$ are the design points.\\
    We have to differentiate two different cases. At first let $x \in I_0$. Then we get
	\begin{align*}
	\tilde x_1 & \leq \frac{1}{2f_{\min}  \, p\,h} - \frac{x}{h} \leq \frac{1}{2f_{\min}  \, p\,h} \,, \\
	\tilde x_{p}&\geq \frac{1-\frac{1}{2f_{\min}  \, p}-x}{h} \geq \frac{1-\big(1-\Delta h - (2f_{\min} \,p)^{-1}\big)}{h} - \frac 1{2f_{\min}  \, p\,h} = \Delta
	\end{align*} 
	by \eqref{lemma:design:points:auxiliary:1} in Lemma \ref{lemma:design:points:auxiliary}. For appropriate $ p$ and $ h\in ( c/ p,  h_0]$ the quantity $(2f_{\min} \,p\,h)^{-1}$ gets small if $c$ is chosen large enough. Consequently, the points $\tilde x_1,\ldots,\tilde x_p$ form a grid which covers an interval containing at least $\big[1/(2f_{\min} \,p\,h), \Delta \big]$.
	Hence, there exist $1 \leq j_{*} \defeq j_{*}(x) <  j^{*}\defeq j^{*}(x) \leq p$ such that $\tilde{x}_{j_{*}}\geq 0 \;\land \; (j_{*}=1\;\lor\; \tilde{x}_{j_{*}-1}<0)$ and $\tilde{x}_{j^{*}} \leq \Delta \;\land\;\tilde{x}_{j^{*}+1}>\Delta$	are satisfied.  
	Here $\land$ denotes the logical and, and $\lor$ the logical or. For the second case let $x\in I_1$. Then we obtain the estimates 
	\begin{align*}
		\tilde{x}_1& \leq \frac1{2f_{\min} p\,h}- \frac{x}{h} \leq \Delta -\frac1{h}\leq -\Delta\,,&\tilde{x}_{p} & \geq -\frac{1}{2f_{\min} p\,h}\,,
	\end{align*}
	since $h$ has to be sufficiently small and $p$ sufficiently large. Consequently, the points $\tilde{x}_1,\ldots,\tilde{x}_{p}$ form a grid which covers an interval containing $[-\Delta, -1/(2f_{\min} p\,h)]$. Define $\tilde j_*$ and $\tilde j^*$ in the same manner. Of course this also holds for $\tilde y_1, \ldots, \tilde y _p$. \\
	We set $g_v(x,y)\defeq({v}^\top U(x, y) )^2, v\in \R^{N_m}$. For $0 \leq x \leq y \leq 1$ we can further estimate
	\begin{align*}
		{v}^\top B_{ p,  h}(x,y) \, {v}&= \frac1{(p\,h)^2} {v}^\top \bigg(\sum_{j<k}^p U_m\vecTwo{\tilde{x}_j}{\tilde{y}_k} \, U_m^\top\vecTwo{\tilde{x}_j}{\tilde{y}_k} \, K\vecTwo{\tilde{x}_j}{\tilde{y}_k} \bigg) {v} \nonumber\\
		&\geq \frac{f_{\min}^2K_{\min}}{f_{\min}^2 \, (p\,h)^2}\sum_{j<k}^p \Big({v}^\top U_m\vecTwo{\tilde{x}_j}{\tilde{y}_k} \Big)^2 \one_{[- \Delta,  \Delta]^2 } \big( \tilde x_j, \tilde y_{ k} \big) \nonumber\\
		&\geq f_{\min}^2K_{\min}\sum_{j<k}^{p-1} g_v\vecTwo{\tilde{x}_j}{\tilde{y}_k} \big(\tilde{x}_{j+1}-\tilde{x}_j\big)\big(\tilde{y}_{k+1} - \tilde{y}_k\big)\one_{[ 0,  \Delta)^2}( \tilde{x}_j,\tilde{y}_k)\\
		&\geq f_{\min}^2K_{\min} \sum_{j = j_*}^{j^*} \sum_{\substack{ k = k_*\\ j<k}}^{k^*} g_v\vecTwo{\tilde{x}_j}{\tilde{y}_k} \big(\tilde{x}_{j+1}-\tilde{x}_j\big)\big(\tilde y_{k+1} - \tilde{y}_k\big)\\
		&\defeql A^{(i)}_{p,h}(x,y;v), \quad i = 1,2,3,
	\end{align*}
	for $(x,y)^\top \in S_1$ by Assumption \ref{ass:designdensity}, inequality \eqref{lemma:design:points:auxiliary:2} in Lemma \ref{lemma:design:points:auxiliary} for the $\tilde{x}_j, \tilde{y}_k$ and the presentation of $B_{ p,  h}(x,y)$ in \eqref{eq:Bp}.
	We start with the case $ (x,y)^\top \in S_1=I_0\times I_0$. 
	By inserting this function in \eqref{eq:glm:vBv}, dropping the scalar $f_{\min}^2K_{\min}$ and oppressing the sups the object of interest is given by
\begin{align}
		\bigg| \sum_{j = j_*}^{j^*} &\sum_{\substack{ k = k_*\\ j<k}}^{k^*}  g_v\vecTwo{\tilde x_j}{\tilde{y}_k}\big(\tilde{x}_{j+1}-\tilde x_j\big)\big(\tilde{y}_{k+1} - \tilde{y}_k\big) -\int_{E_1} g_v(\bs z)\,\dx \bs z \bigg| \label{eq:integral:approximation}
        %
        %
	\end{align}
	In order to bound these terms of the sum separately we note that $g_v(z)=\big({v}^\top U_m( z) \big)^2$ is a bivariate polynomial function and therefore Lipschitz-continuous on $E_1, E_2$ and $E_3$ respectively. By Lemma \ref{lemma:design:points:auxiliary} it holds $\abs{\tilde x_{j+1} - \tilde x_j} \leq (f_{\min} \,p\,h)^{-1}$. Therefore the difference of every point of $z = (z_1, z_2)^\top \in E_1$ to a design point $(\tilde x_j, \tilde y_k)$ is of order $(p\,h)^{-1}$. Figure \ref{fig:illustration:integral:approximation} illustrates this step. 
	\begin{figure}
		\begin{tikzpicture}[scale=.9]
			\begin{axis}[
				width = 8.5cm, height = 8.5cm,
				axis lines =middle,
				axis line style={thin},
				xmin=0,xmax=1,ymin=0,ymax=1,
				xtick ={.1,.19,.27,.33,.4, .48, .56, .66, .78, .9},
				ytick ={0,.1,.19,.27,.33,.4, .48, .56, .66, .78, .9},
				yticklabels={$-y/h$,$\tilde y_1$,,,,,$\tilde y_{k_\star}$,,,$\tilde y_{k^\star}$,$\tilde y_p$},
				xticklabels={$\tilde x_1$,,$\tilde x_{j_\star}$,,,,$\tilde x_{j^\star}$,,,$\tilde x_p$},
				grid = major,
				grid style={thin,densely dotted,black!20},
				extra x ticks = {0}, extra x tick labels = {$-\nicefrac xh$},
				extra y ticks = {0}, extra y tick labels = {$-\nicefrac yh$}]
				\addplot [mark = none, thin] coordinates {(0,0)(0,1)(0,0)(1,0)};
				\addplot [black, mark = none, ultra thin] coordinates {(0,0)(1,1)};
				\addplot +[->, black, mark=none, thin] coordinates {(.25, .44) (.25, .97)};
				\addplot +[->, black, mark=none, thin] coordinates {(.24, .45) (.77, .45)};
				\addplot +[black, mark=none, ultra thin] coordinates {(.25, .45) (.75, .95)};
				\addplot[black, mark = -, only marks] coordinates {(0.25,.5)(0.25,.545)(0.25,.585)(0.25,.615)(0.25,.65)(0.25,.69)(0.25,.73)(0.25,.78)(0.25,.84)(0.25,.9)(0.25,.95)};
				\addplot[black, mark = +, only marks] coordinates {(.3,.45)(.345,.45)(.385,.45)(.415,.45)(.45,.45)(.49,.45)(.53,.45)(.57,.45)(.63,.45)(.69,.45)(.75,.45)};
				\addplot[black, mark = Mercedes star, only marks] coordinates{(.27,.48)(.27,.56)(.27,.66)(.27,.78)
					(.33,.56)(.33,.66)(.33,.78)(.4,.66)(.4,.78)(.48,.78)(.56,.78)};
				\addplot[red, mark = |] coordinates {(.6,.45)} node [below] {\small $\Delta$};
				\addplot[red, mark = -] coordinates {(.25,.8)} node [left] {\small $\Delta$};
				\addplot[red, mark = none] coordinates {(.25,.45)(.25,.8)(.6,.8)(.25,.45)};
				\addplot[black,no marks] coordinates{(.258,.458)} node[below left] {\small $0$};
				\addplot[black,no marks] coordinates{(.75,.45)} node[below] {\small $1$};
				\addplot[black,no marks] coordinates{(.25,.95)} node[left] {\small $1$};	
				\addplot[black, no marks] coordinates{(0.25,.5)} node [left] {$x_1$};
				\addplot[black, no marks] coordinates{(0.25,.9)} node [left] {$x_p$};
				\addplot[black, no marks] coordinates{(0.3,.45)} node [below] {$x_1$};
				\addplot[black, no marks] coordinates{(0.69,.45)} node [below] {$x_p$};
				\addplot[red, no marks] coordinates {(.45,.6)} node {$E_1$};
		\end{axis}
		\end{tikzpicture}
		\hspace{3mm}
		\begin{tikzpicture}[scale=.9]
			\begin{axis}[
					width = 8.5cm, height = 8.5cm,
					axis lines=middle,
					axis line style={thin},
					xmin=0,xmax=1,ymin=0,ymax=1,
					xtick ={.1,.19,.27,.33,.4, .48, .56, .66, .78, .9},
					ytick ={.1,.19,.27,.33,.4, .48, .56, .66, .78, .9},
					yticklabels={$\tilde y_1$,,,,,$\tilde y_{k_\star}$,,,$\tilde y_{k^\star}$,$\tilde y_p$},
					xticklabels={$\tilde x_1$,,$\tilde x_{j_\star}$,,,,$\tilde x_{j^\star}$,,,$\tilde x_p$},
					grid=major,
					grid style={thin,densely dotted,black!20},
					extra x ticks = {0}, extra x tick labels = {$-\nicefrac xh$},
					extra y ticks = {0}, extra y tick labels = {$-\nicefrac yh$}]
					\addplot [mark = none, thin] coordinates {(0,0)(0,1)(0,0)(1,0)};
					\addplot [black, mark = none, ultra thin] coordinates {(0,0)(1,1)};
					\addplot +[<-, black, mark=none, thin] coordinates {(.6, .28) (.6, .81)};
					\addplot +[<-, black, mark=none, thin] coordinates {(.08, .8) (.61, .8)};
					\addplot +[black, mark=none, ultra thin] coordinates {(.1, .3) (.6, .8)};
					\addplot[black, mark = Mercedes star, only marks] coordinates{(.27,.48)(.27,.56)(.27,.66)(.27,.78)
						(.33,.56)(.33,.66)(.33,.78)(.4,.66)(.4,.78)(.48,.78)(.56,.78)};
					\addplot[red, mark = -] coordinates {(.6,.45)} node [right] {\small $-\Delta$};
					\addplot[red, mark = |] coordinates {(.25,.8)} node [above] {\small $-\Delta$};
					\addplot[red, mark = none] coordinates {(.25,.45)(.25,.8)(.6,.8)(.25,.45)};
					\addplot[black,no marks] coordinates{(.6,.8)} node[above right] {\small $0$};
					\addplot[black,mark = -] coordinates{(.6,.3)} node[right] {\small $-1$};
					\addplot[black,mark = |] coordinates{(.1,.8)} node[above] {\small $-1$};	
					\addplot[red, no marks] coordinates {(.45,.6)} node {$E_3$};
				\end{axis}
		\end{tikzpicture}
		\caption{Illustration to the approximation in \eqref{eq:integral:approximation}. On the left the first and on the right the third case.}
		\label{fig:illustration:integral:approximation}
	\end{figure}
	The amount of points $(\tilde x_j, \tilde y_k)^\top, j_*\leq j \leq j^*, k_* \leq k \leq k^*, j < k,$ is bounded by Lemma \ref{lemma:design:points}
	\begin{align*}
		\sum_{j,k = 1}^p \ind_{E_1}(\tilde x_j, \tilde y_k) &\leq \Big(2f_{\max} \max \big(\Delta\,p\,h, 1 \big)\Big)^2 = \mc O \big((p\,h)^2\big).
	\end{align*}
	Therefore the sum approximates the integral in form of a Riemann-sum of a Riemann-integrable, since it is Lipschitz-continuous, function with the error rate getting small with order $(p\,h)^{-1}$. This concludes the proof via the previous steps.
\end{proof}

Now we can prove Lemma \ref{lemma:locpol:weights}.

\begin{proof}[Proof of Lemma \ref{lemma:locpol:weights}]
	By Lemma \ref{lemma:proof:(LP1)} there exist a sufficiently large $ p_0 \in \N$ and a sufficiently small  $ h_0\in \R_+$ such that for all $ p\geq   p_0$ and $ h\in( c/ p,  h_0]$, where $c>0$ is a sufficiently large constant such that the local polynomial estimator in \eqref{eq:locpolest} with any order $m \in \N$ is unique and a linear estimator with weights given in \eqref{eq:weights:locpol}. We make use of Lemma \ref{lemma:proof:(LP1)} by 
	\begin{align} \label{eq:inverseBp:2norm}
	\norm{B_{ p, h}^{-1}(x,y)}_{M,2} \leq \lambda_0^{-1}
	\end{align}
	for all $ p\geq  p_0$, $ h\in( c/ p,  h_0]$ and $ x \leq y \in [0,1]$, where $\norm{M}_{M,2}$ is the spectral norm of a symmetric matrix $M \in \R^{N_{m} \times N_{m}}$.\\
    \ref{ass:weights:polynom}: Let $Q$ be a bivariate polynomial of degree $\zeta\geq 1$ with $r_1 + r_2 = \zeta$, $r_1, r_2 \in \N$. By the bivariate Taylor expansion we have
    \begin{align*}
        Q(x_j, x_k) & = Q(x,y) + \sum_{i = 1}^{r_1} \sum_{l = 1}^{r_2} \frac{(x_j - x)^i\,(x_k-y)^l}{i!\, l!}\cdot \frac{\partial^{i+l}}{\partial^i x \partial^l y}Q(x_j, x_k) = q(x,y)^\top U_h\vecTwo{x_j-x}{x_k-y},
    \end{align*}
    where $q(x,y) \defeq (q_0(x,y),\ldots,q_\zeta(x,y) )^\top, $ with $$q_l(x,y)\defeq\bigg(\frac{\partial^l}{\partial^l x}\, \frac{q(x,y)}{l!}, \frac{\partial^l}{\partial^{l-1}x\partial y}\, \frac{q(x,y)}{(l-1)!},\ldots,\frac{\partial^l}{\partial^l y}\,\frac{q(x,y)}{l!}\bigg)\,.$$ Setting $z_{j,k} = Q(x_j, x_k)$,  we get for $x \leq y$
    \begin{align*}
        \widehat{\mathrm{LP}}(x,y) & = \argmin_{\vartheta \in \R^{N_\zeta}} \sum_{j < k}^p \bigg( Q(x_j, x_k) - \vartheta ^\top \Uh{x_j-x}{x_k-y}\bigg)^2 \Kh{x_j-x}{x_k-y}\\
        & = \argmin_{\vartheta \in \R^{N_\zeta}} \sum_{j < k}^p \bigg( \big(q(x, y) - \vartheta\big)^\top \Uh{x_j-x}{x_k-y}\bigg)^2 \Kh{x_j-x}{x_k-y}\\
        & = \argmin_{\vartheta \in \R^{N_\zeta}} \big(q(x, y) - \vartheta\big)^\top B_{p,h}(x,y)\big(q(x, y) - \vartheta\big) = q(x,y), 
    \end{align*}
    since $B_{p,h}(x,y)$ is positive definite. \\
    \ref{ass:weights:vanish}: This follows directly by the support of the kernel function $K$ and the display \eqref{eq:weights:locpol}.\\
    \ref{ass:weights:sup}: If design points are dispersed according to Assumption \ref{ass:designdensity}, then by Lemma \ref{lemma:design:points} and the fact $\norm{U_h(0,0)}_2 = 1$ we get
    \begin{align*}
        \abs{\wjk xyh} & \leq \frac1{(p\,h)^2}\normb{U_h(0,0)}_2 \normb{B_{p,h}^{-1}}_{M,2}\normm{\Uh{x_j-x}{x_k-y}\,\Kh{x_j-x}{x_k-y}}_{2}\\
        & \leq \frac{K_{\max}}{(p\,h)^2\,\lambda_0}\normm{\Uh{x_j-x}{x_k-y}}_2 \ind_{\max\{x_j-x,x_k-y\} \leq h} \\
        & \leq \frac{4\,K_{\max}}{(p\,h)^2\,\lambda_0}\,.
    \end{align*}
    \ref{ass:weights:lipschitz}: We divide the proof in three cases with respect to the fact whether the weights vanish or not. In the following let $ 1 \leq  j \leq  p$ and $ x, y \in [0,1]$.
	
	\medskip
	
	Let $\min \big(\max(\abs{x- x_{ j}} ,\abs{y- x_{ k}}), \max(\abs{y^\prime-x_k},\abs{y^\prime-x_k})\big) > h$, then by \ref{ass:weights:vanish} both weights $\wjk xyh$ and $\wjk{x^\prime}{y^\prime}h$ vanish, and hence \ref{ass:weights:lipschitz} is clear.  
	
	\medskip
	
	Let $\max \big(\max(\abs{x- x_{ j}} ,\abs{y- x_{ k}}), \max(\abs{y^\prime-x_k},\abs{y^\prime-x_k})\big) > h$. We assume $\max(x- x_{ j} ,y- x_{ j})>h $ and $\max(y-x_k,y-x_j) \leq h$ without loss of generality. Once again \ref{ass:weights:vanish} leads to $\wjk xyh = 0 $, and hence
	the Cauchy-Schwarz inequality, $\norm{U(0,0)}_2=1$ and \eqref{eq:inverseBp:2norm} imply
	\begin{align*}
	\Big| \wjk xyh - \wjk {x^\prime}{y^\prime}h \Big| &= \frac{1}{p\,h} \Big| U^\top(0,0) \, B_{ p, h}^{-1}(x,y) \, \Uh{x-x_j}{y-x_k} \Big| \,   \Abs{\Kh{x-x_j}{y-x_k}} \\
	&\leq \frac1{ p^{ 1}  h^{ 1}} \normb{U(0,0)}_2 \, \normm{B_{p,h}^{-1}(x,y) \, \Uh{x-x_j}{y-x_k}}_2 \,  \Abs{\Kh{x-x_j}{y-x_k}} \\
    &\leq \frac1{p\,h}  \norm{B_{ p, h}^{-1}(x,y)}_{M,2} \, \normm{\Uh{x-x_j}{y-x_k}}_2 \, \Abs{\Kh{x-x_j}{y-x_k}} \\
	&\leq \frac{1}{\lambda_0  \,  p\,h} \Abs{\Kh{x-x_j}{y-x_k}}  \bigg(\sum_{\substack{|r|=0,\\r=(r_1,r_2)\in\N^2}}^m \bigg(\frac{({x_j}-  x)^{r_1}(x_k-y)^{r_2}}{{h^r}  r_1!r_2!}\Big)^2\bigg)^{1/2} \\
	&\leq \frac{c_1}{\lambda_0 \,p\,h} \Abs{\Kh{x-x_j}{y-x_k}} 
	\end{align*}
	for a positive constant $c_1>0$. In the last step we used the fact that the sum can not get arbitrarily large, also for  component wise small $ h$, because the kernel $K$ has compact support in $[-1,1]$. If $\max(x-y,x^\prime-y^\prime)>h$, we use the upper bound of the kernel function in \eqref{eqn:kernel} and obtain
		\begin{align*}
		\Big| \wjk xyh - \wjk {x^\prime}{y^\prime}h \Big|  \leq \frac{c_1 \, K_{\max}}{\lambda_0 \,  p\,h} \,.
	\end{align*}
	Conversely, if $\max(x-y,x^\prime-y^\prime)\leq h$, we add $K_{h}(x_j- x^\prime, x_k-y^\prime) = 0$ and estimate
	\begin{align*}
		\Big| \wjk xyh - \wjk {x^\prime}{y^\prime}h \Big| & \leq \frac{c_1}{\lambda_0 \,  p^{ 1}  h^{ 1}} \Abs{\Kh{x-x_j}{y-x_k} -\Kh{x^\prime-x_j}{y^\prime-x_k}}\\
  &\leq  \frac{c_1 \, L_K}{\lambda_0 \,  p^{ 1}  h^{ 1}} \, \frac{\max(\abs{x-y},\abs{x^\prime -y^\prime})}{ h} \, ,
	\end{align*}
	because of the Lipschitz continuity of the kernel. In total we get
	\begin{align} \label{proof:locpol:weights:1}
		\Big| \wjk xyh - \wjk {x^\prime}{y^\prime}h \Big| \leq \frac{c_1 \, \max(K_{\max},L_K)}{\lambda_0 \, p\,h} \bigg(\frac{\max(\abs{x-y},\abs{x^\prime -y^\prime})}{ h} \wedge 1 \bigg) \,.
	\end{align}
	
	\medskip
	
	Let $\max \big(\max(\abs{x- x_{ j}} ,\abs{y- x_{ k}}), \max(\abs{y^\prime-x_k},\abs{y^\prime-x_k})\big) \leq h$, then both weights doesn't vanish and we have to show a proper Lipschitz property for
	\begin{align*} 
		\wjk xyh = \frac1{ p\,h} U^\top(0,0) \, B_{p,h}^{-1}(x,y) \, \Uh{x-x_j}{y-x_k}\,\Kh{x-x_j}{y-x_k}\,.
	\end{align*}
	The kernel $K$ and polynomials on compact intervals are Lipschitz continuous, hence it suffices to show that $B_{ p,  h}^{-1}(x,y)$ has this property as well. Then the weights are products of bounded Lipschitz continuous functions and, thus, also Lipschitz continuous. The entries of the matrix $B_{ p,  h}(x,y)$, which is defined in \eqref{eq:Bp}, considered as functions from $[0,1]$ to $\R$ are Lipschitz continuous. Indeed they are of order one by using Assumption \ref{ass:design:localization} and Lemma \ref{lemma:design:points}. Hence, the row sum norm $\norm{B_{ p,  h}(x,y)-B_{ p,  h}(x^\prime, y^\prime)}_{M,\infty}$ is a sum of these Lipschitz continuous functions and, in consequence, there exists a positive constants $L_\infty >0$ such that
	\begin{align*}
	\norm{B_{ p,  h}(x,y)-B_{ p,  h}(x^\prime, y^\prime)}_{M,\infty} \leq (N_{m}+1) L_\infty \, \frac{\max(\abs{x-y},\abs{x^\prime -y^\prime})}{ h}
	\end{align*}
	is satisfied. Since the matrices are symmetric the column sum norm is equal to the row sum norm and we obtain
	\begin{align*}
		\norm{B_{ p,  h}(x,y)-B_{ p,  h}(x^\prime, y^\prime)}_{M,2} \leq \norm{B_{ p,  h}(x,y)-B_{ p,  h}(x^\prime, y^\prime)}_{M,\infty} \leq (N_{m}+1) L_\infty \,\frac{\max(\abs{x-y},\abs{x^\prime -y^\prime})}{ h}\,.
	\end{align*}
	This leads together with \eqref{eq:inverseBp:2norm} and the submultiplicativity of the spectral norm to
	\begin{align*}
		\normb{ B_{ p,  h}^{-1}(x,y)-B_{ p,  h}^{-1}(x^\prime,y^\prime) }_{M,2} &= \normb{ B_{ p,  h}^{-1}(x^\prime,y^\prime) \big( B_{ p,  h}(x^\prime, y^\prime)-B_{ p,  h}(x,y) \big) B_{ p,  h}^{-1}(x,y)}_{M,2}\\
		&\leq \normb{ B_{ p,  h}^{-1}(x,y)}_{M,2} \, \norm{ B_{ p,  h}(x^\prime, y^\prime)-B_{ p,  h}(x,y)}_{M,2} \, \norm{ B_{ p,  h}^{-1}(x,y)}_{M,2} \\
		&\leq \frac{(N_{m}+1) L_\infty }{\lambda_0^2} \, \frac{\max(\abs{x-y},\abs{x^\prime -y^\prime})}{ h} \,,
	\end{align*}
	which is the Lipschitz continuity of $B_{ p,  h}^{-1}(x,y)$ with respect to the spectral norm. So finally there exists a positive constant $c_2>0$ such that 
	\begin{align*}
	 \Big| \wjk xyh - \wjk {x^\prime}{y^\prime}h \Big|  \leq  \frac{c_2}{2\, p\,h} \,\frac{\max(\abs{x-y},\abs{x^\prime -y^\prime})}{ h}\leq \frac{c_2}{ p\,h}
	\end{align*}
	is satisfied. Here we used in the last step that $\max \big(\max(\abs{x-x_j},\abs{y-x_j}), \max(\abs{y-x_k},\abs{y-x_j})\big) \leq h$ implies $\max(\abs{x-y},\abs{x
 ^\prime-y^\prime}) \leq 2\,h\,$.
	
	\medskip
	
	In the end we choose $C_3 \geq \max\big( c_1 \max(K_{\max},L_K)/\lambda_0, c_2\big)$ and obtain Assumption \ref{ass:weights:lipschitz}. 
\end{proof}

Here we state two auxiliary Lemmas which result as a consequence of Assumption \ref{ass:designdensity} and which were used in the proof of Lemma \ref{lemma:locpol:weights}. For their proofs see \cite[Proofs of Lemma 6 and 7]{berger2023dense}.

\begin{lemma} \label{lemma:design:points:auxiliary}
	Let Assumption \ref{ass:designdensity} be satisfied. Then for all $j = 1,\dotsc,p$ it follows that
	\begin{equation}
		\frac{j-0.5}{f_{\max}  \, p} \leq x_{j} \leq \frac{{j}-0.5}{f_{\min}  \, p} \quad \text{and} \quad 1-\frac{p-{j}+0.5}{f_{\min}  \, p} \leq x_{j} \leq 1-\frac{p-{j}+0.5}{f_{\max}  \, p} \, ,\label{lemma:design:points:auxiliary:1}
	\end{equation}
	and for all $1 \leq j < l \leq p$ that
	\begin{equation}
		\frac{l-j}{f_{\max}  \, p} \leq x_{l}-x_{j} \leq \frac{l-j}{f_{\min}  \, p} \,. \label{lemma:design:points:auxiliary:2}
	\end{equation}
\end{lemma}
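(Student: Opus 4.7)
}
The statement is a direct consequence of Assumption \ref{ass:designdensity}, which specifies $x_j$ implicitly through the quantile equation $\int_0^{x_j} f(t)\,\mathrm{d}t = (j-0.5)/p$ and bounds the design density by $0 < f_{\min} \leq f(t) \leq f_{\max} < \infty$ on $[0,1]$. The entire proof is three applications of the elementary sandwich $f_{\min}(b-a) \leq \int_a^b f(t)\,\mathrm{d}t \leq f_{\max}(b-a)$ for $0 \leq a \leq b \leq 1$, combined with normalisation $\int_0^1 f(t)\,\mathrm{d}t = 1$.

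For the first pair of inequalities in \eqref{lemma:design:points:auxiliary:1}, I would apply the sandwich to the defining integral with $a=0$, $b=x_j$, obtaining $f_{\min}\, x_j \leq (j-0.5)/p \leq f_{\max}\, x_j$; dividing by the corresponding constant yields the claimed upper and lower bounds on $x_j$ in terms of $f_{\min}$ and $f_{\max}$.

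For the second pair of inequalities in \eqref{lemma:design:points:auxiliary:1}, I would first subtract the defining equation from the normalisation condition to obtain $\int_{x_j}^1 f(t)\,\mathrm{d}t = (p-j+0.5)/p$. Applying the sandwich with $a=x_j$, $b=1$ gives $f_{\min}(1-x_j) \leq (p-j+0.5)/p \leq f_{\max}(1-x_j)$, and solving for $x_j$ yields both inequalities.

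For the spacing bound \eqref{lemma:design:points:auxiliary:2}, I would subtract the defining equations for $x_l$ and $x_j$ to obtain $\int_{x_j}^{x_l} f(t)\,\mathrm{d}t = (l-j)/p$, and then sandwich this integral by $f_{\min}(x_l - x_j)$ and $f_{\max}(x_l - x_j)$. There is no conceptual obstacle here; the only point requiring care is tracking the direction of the inequalities when dividing and rearranging. The Lipschitz continuity of $f$ stated in Assumption \ref{ass:designdensity} is not needed for this lemma, only the uniform two-sided bounds on $f$.
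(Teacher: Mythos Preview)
Your proposal is correct and is exactly the standard argument. The paper itself does not give a proof but refers to \cite[Proofs of Lemma 6 and 7]{berger2023dense}; your three applications of the sandwich bound $f_{\min}(b-a)\le \int_a^b f\le f_{\max}(b-a)$ to the intervals $[0,x_j]$, $[x_j,1]$ and $[x_j,x_l]$ are precisely what that reference contains, and your observation that the Lipschitz assumption on $f$ is not needed here is also correct.
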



\begin{lemma} \label{lemma:design:points}
	Suppose that Assumption \ref{ass:designdensity} is satisfied. Then we obtain for all $p \geq 1$ and any set $S = [a_1, b_1] \times [a_2 \times b_2] \subseteq [0,1]^2$ the estimate
	\[ \sum_{j, k = 1}^p \one_{\{(x_k, x_j)^\top \in S\}}\leq 4\,f_{\max}^2 \max\big(p(b_1-a_1), 1 \big)\,\max\big(p(b_2-a_2), 1 \big) \,. \]
	In particular Assumption \ref{ass:design:localization} is satisfied.
\end{lemma}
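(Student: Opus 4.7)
The key observation is that the indicator of a rectangle factors, which turns the double sum into a product of one-dimensional counts. Writing $N_i \defeq \card\{j \in \{1,\ldots,p\} \mid x_j \in [a_i,b_i]\}$ for $i = 1,2$, we have
\begin{align*}
\sum_{j,k=1}^{p} \one_{\{(x_k,x_j)^\top \in S\}} \;=\; \Big(\sum_{k=1}^{p}\one_{\{x_k \in [a_1,b_1]\}}\Big)\Big(\sum_{j=1}^{p}\one_{\{x_j \in [a_2,b_2]\}}\Big) \;=\; N_1 \cdot N_2,
\end{align*}
so it suffices to establish the one-dimensional bound $N_i \leq 2 f_{\max}\,\max(p(b_i - a_i),\,1)$.

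To prove the one-dimensional bound, fix $i$ and assume $N_i \geq 1$ (otherwise there is nothing to show). Since $x_1 < \cdots < x_p$, the indices of design points lying in $[a_i,b_i]$ form a contiguous block $\{j^{\min}, j^{\min}+1,\ldots, j^{\max}\}$. Applying \eqref{lemma:design:points:auxiliary:2} in Lemma \ref{lemma:design:points:auxiliary} to $j^{\min}$ and $j^{\max}$ yields
\begin{align*}
b_i - a_i \;\geq\; x_{j^{\max}} - x_{j^{\min}} \;\geq\; \frac{j^{\max} - j^{\min}}{f_{\max}\,p},
\end{align*}
hence $N_i = j^{\max} - j^{\min} + 1 \leq f_{\max}\,p\,(b_i - a_i) + 1 \leq 2\,f_{\max}\,\max\bigl(p(b_i - a_i),\,1\bigr)$, using that $f_{\max} \geq 1$ (which follows from $f$ being a density on $[0,1]$, so $f_{\max} \geq \int_0^1 f \geq 1$; otherwise absorb the constant). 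Multiplying the two bounds gives the claimed estimate with constant $4\,f_{\max}^2$.

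\textbf{Consequence for Assumption \ref{ass:design:localization}.} Applying the one-dimensional count to the interval $[x-h,x+h]$ of length $2h$ gives $\card\{j : x_j \in [x-h,x+h]\} \leq 2\,f_{\max}\,\max(2ph,\,1)$. In the relevant bandwidth regime $h \in (c/p, h_0]$ with $c > 0$, we have $ph \geq c$, so $\max(2ph,1) \leq (1 + 1/(2c))\cdot 2ph$, yielding the bound $\Ccard\,p\,h$ with $\Ccard \defeq 2\,f_{\max}(2 + 1/c)$. There is no substantial obstacle in the argument; it is essentially monotonicity of the design points combined with the spacing estimate from Lemma \ref{lemma:design:points:auxiliary}, and the only care needed is the $+1$ versus $\max$ bookkeeping encoded in the factor of $2$.
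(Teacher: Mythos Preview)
Your proof is correct. The paper itself does not give a proof of this lemma but defers to \cite[Proofs of Lemma 6 and 7]{berger2023dense}; your argument---factoring the rectangular indicator into a product of one-dimensional counts and bounding each via the spacing estimate \eqref{lemma:design:points:auxiliary:2} from Lemma \ref{lemma:design:points:auxiliary}---is exactly the natural route and is what that reference does.
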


{\color{black}
\section{Derivative estimation}
\label{sec:derivativeestimation}

	For the weights of the linear derivative estimator in \eqref{eqn:estimatorCovariancederi} we require the following properties.
	
	\begin{assumption}\label{ass:biv:weights} 
		There is a $c>0$ and a $h_0>0$ such that for  sufficiently large $p$, the following holds for all $h \in (c/p_, h_0]$ for constants $\Cmax, \Clip>0$, which are independent of $n, p,h$ and $(x, y) \in T$. Then the weights $w_{j,k}^{(\bs s)}, 1 \leq j < k \leq p$ for a $\bs s = (s_1, s_2) \in \N_0^2$ satisfy the following assumptions.
		\begin{enumerate}[label=\normalfont{(C\arabic*)},leftmargin=9.9mm]
			\item For polynomials up to order $\zeta$ the weights reproduce the $\bs s^{\mathrm{th}}$ partial derivative. That is
			\begin{align*} 
				\sum_{j < k}^p \frac{(x_j - x)^{r_1}(x_j - y)^{r_2}}{r_1!\,r_2!}\,w_{j,k}^{(\bs s)}( x,y;h) = \delta_{(r_1,r_2), \bs s}, \quad (x,y)\in T,
			\end{align*} \label{ass:biv:weights:polynom}
			for $r_1,r_2 \in \N_0$ s.t. $r_1 + r_2 \leq \zeta$ and $\delta_{(r_1, r_2), \bs s} = 1$ if $(r_1, r_2) = \bs s$ and $0$ otherwise.
			\item We have $w_{j,k}^{(\bs s)}( x,y;h) = 0$ if $\max(\abs{x_j-x}, \abs{x_k - y})> h$ for $(x,y) \in T$. \label{ass:biv:weights:vanish}
			%
			%
			\item For the absolute values of the weights $ \displaystyle \max_{1\leq j<k\leq p} \big| w_{j,k}^{(\bs s)}( x,y;h)\big|  \leq \frac\Cmax{p^2\,h^{2+\abs{\bs s}}}$ for $ (x,y) \in T$.  \label{ass:biv:weights:sup}
			\item For a Lipschitz constant $\Clip > 0$  and $(x,y),\, (x^\prime, y^\prime) \in T$ it holds that
			\begin{align*}
				\absb{  w_{j,k}^{(\bs s)}( x,y;h) -  w_{j,k}^{(\bs s)}( x^\prime,y^\prime;h) }\leq \frac{\Clip}{ p^2\, h^{2+ \abs{\bs s}}} \min \bigg(\frac{\max(\abs{x-x^\prime}, \abs{x-y^\prime})}h, 1\bigg) \,.
			\end{align*} \label{ass:biv:weights:lipschitz}
			%
			
		\end{enumerate}		
	\end{assumption}
	
Given $\gamma > 0, C_Z>0, \,\beta> 0 $ and $L> 0$, we consider the class of processes 
\begin{align}
	\PclassCov &  = \big\{ Z\colon[0,1]\to \R \ \text{centered random process} \mid \text{a.s. differentiable paths} \nonumber \\ & \quad \quad \text{up to order $\leq \floor{\beta}$, } \exists \; \text{random variable $M$ s.th. $\norm{Z}_{\mc H, \beta;[0,1]} \leq M$ a.s.}\nonumber\\
	& \quad \quad \text{with } \expec[M^4] \leq C_Z \text{ and } \Gamma\in \mc H_{T}(\gamma, L). \big\}\label{eq:cov:classprocesses}
\end{align}

If $Z \in \PclassCov$ and  $l \leq \floor{\beta}$ we obtain
\begin{align}\label{eq:hoelder:ZZ}
	\normb{\partial^{l} Z(x)\partial^{l}Z(y) - \partial^{l}Z(x^\prime)\partial^{l}Z(y^\prime)}_\infty  & \leq 4\,M^2 \max\big(\abs{x-x^\prime}, \abs{y-y^\prime}\big)^{\min(1, \beta - l)}.
\end{align}

\begin{theorem}\label{thm:cov:rates}
	Consider model \eqref{eq:model} under Assumption \ref{ass:design:localization} and the distribution Assumption \ref{ass:distribution} for the errors with $Z \in \PclassCov$ with $\gamma, \beta, C_Z, L_\varGamma > 0$. Given $\bs s = (s_1, s_2) \in \N_0^2$ with $\abs{\bs s} \leq \floor{\gamma}$ consider the linear estimators $\hat \Gamma_n^{(\bs s)}$ of the partial derivative $D^{\bs s} \Gamma$ of the covariance kernel $\Gamma $ restricted to $T$ in \eqref{eqn:estimatorCovariancederi} with weights satisfying Assumption \ref{ass:biv:weights} with $\zeta = \floor \gamma$. Then for sufficiently large $p$ and $n$, 
	\begin{align*}
		\sup_{h \in (c/p, h_0]}\, \sup_{Z \in \PclassCov} a_{n,p,h}^{-1} \expec\Big[\normb{\hat \Gamma_{n}^{(\bs s)}(\cdot; h)  - D^{\bs s}\Gamma}_\infty\Big] = \mc O(1)\,,
	\end{align*}
	where 
	\begin{equation}\label{eq:cov:orderbound:deriv}
		a_{n, p, h} = \max\bigg( h^{\gamma - \abs{\bs s}}, \sqrt{\frac{\log(h^{-1})}{n\,p\,h^{1+2\abs{\bs s}}}}, \frac{1}{\sqrt n\, h^{\abs{\bs s}- \eta }}\bigg)\,,  
	\end{equation}
	with $\eta \defeq \min(\abs{\bs s}, \beta_1)$ for any $\beta_1$ with $ 0 < \beta_1 < \beta$. 
\end{theorem}

\smallskip

\begin{corollary}[Rates of convergence] Under the assumptions of Theorem \ref{thm:cov:rates} we have the following. Depending on the smoothness $\beta>0$ of the processes $Z \in \mc P(\gamma,\beta) =  \PclassCov$, 
\begin{enumerate}
	\item (Smooth processes) if $\beta > \abs{\bs s}$, then setting
		\begin{align}
			h^\star \simeq \max \bigg( \frac1{p}, \bigg( \frac{\log(np)}{n\,p}\bigg) ^{ \frac1{2\gamma + 1} } \bigg)\,,\label{eq:cov:deriv:h:star}
		\end{align}
		yields 
		\begin{align}
			\sup_{Z \in \mc P(\gamma,\beta)} \expec  \Big[\normb{\hat \Gamma_{n}^{(\bs s)}(\cdot; h^\star) - D^{\bs s}\Gamma}_\infty\Big] 
			= \mc O \bigg(\max \bigg(p^{-(\gamma -\abs{\bs s})},\bigg( \frac{\log(np)}{n\,p}\bigg) ^{ \frac{\gamma -\abs{\bs s}}{2\gamma + 1}}, n^{ - \frac12}  \bigg)\bigg)\,, \label{eq:cov:smooth:processes}
		\end{align}
	\item (Rough processes) if $\beta \leq \abs{\bs s}$, then for any fixed $0 < \beta_1 < \beta$, setting
		\begin{align}
		h^\star_{\beta_1} \simeq \max \bigg( \frac1p, \bigg( \frac{\log(np)}{n\,p}\bigg) ^{ \frac1{2\gamma + 1} },n^{- \frac{1}{2(\gamma - \beta_1)}}  \bigg)\,,\label{eq:cov:deriv:h:star:kappa}
	\end{align}
	yields 
	\begin{align}
		\sup_{Z \in \mc P(\gamma,\beta)} \expec  \Big[\normb{\hat \Gamma_{n}^{(\bs s)}(\cdot; h^\star_{\beta_1}) - D^{\bs s}\Gamma}_\infty\Big] 
		 = \mc O \bigg(\max \bigg(p^{-(\gamma -\abs{\bs s})},\bigg( \frac{\log(np)}{n\,p}\bigg) ^{ \frac{\gamma -\abs{\bs s}}{2\gamma + 1} },  n^{- \frac{\gamma -\abs{\bs s}}{2(\gamma - \beta_1)}}  \bigg)\bigg)\,. \label{eq:cov:rough:processes}
	\end{align}
\end{enumerate}
\end{corollary}

For the proofs and further discussion see \citet{berger2025diss}. 

\section{Asynchronous Design}\label{sec:asynchronousdesign}

Let the design points $\{x_{i,j}\}_{i = 1, \ldots, n, \,j = 1, \ldots, p}$ be different for each of the $n$ observed curves. For technical simplicity we assume that the amount of design points in each row is equal to $p$. Since it is not possible to take the average at the design points any more it is mandatory to estimate the mean function. For a pre-estimation $\hat \mu$ of the mean function $\mu$ let the estimator of the covariance kernel be given by

\begin{align}
    \hat \Gamma^{\mathrm{a}}_{n,p} (x,y;h) & = \frac{1}{n} \sum_{i = 1}^n \sum_{j <k}^p w_{i;j,k}(x,y;h) \big( Y_{i,j}- \hat \mu(x_{i,j})\big) \big( Y_{i,k}- \hat \mu(x_{i,k})\big)\,. \label{eq:cov_est_asynchron}
\end{align}

For the error decomposition we subtract the target function 
$$\Gamma(x,y) = \frac{1}{n}\sum_{i = 1}^n\sum_{j<k}^pw_{i;j,k}(x,y;h)\,\Gamma(x,y)$$
and with $\pm \frac{1}{n} \sum_{j<k}^pw_{i;j,k}(x,y;h)\Gamma(x_{i,j}, x_{i,k})$ we obtain the error terms
\begin{align}
    \big( Y_{i,j}- \hat \mu(x_{i,j}) \big( Y_{i,k}- &\hat \mu(x_{i,k})\big)  - \Gamma(x,y) \nonumber \\
     = &\;\big(\mu(x_{i,j}) - \hat \mu(x_{i,j})\big)\big(\mu(x_{i,k}) - \hat \mu(x_{i,k})\big) \tag{I} \label{term:I}\\
    &  + \big(\mu(x_{i,j}) - \hat \mu(x_{i,j})\big) Z_{i}(x_{i,k}) +  \big(\mu(x_{i,k}) - \hat \mu(x_{i,k})\big) Z_{i}(x_{i,j}) \tag{II}\label{term:II}\\
    &  + \big(\mu(x_{i,j}) - \hat \mu(x_{i,j})\big) \,\epsilon_{i,k} +  \big(\mu(x_{i,k}) - \hat \mu(x_{i,k})\big) \,\epsilon_{i,j} \tag{III}\label{term:III} \\
    & + Z_i(x_{i,j}) Z_i(x_{i,k}) - \Gamma(x_{i,j}, x_{i,k}) \tag{IV} \label{term:IV}\\
    & + \epsilon_{i,j}\epsilon_{i,k} \label{term:V} \tag{V} \\
    & + Z_i(x_{i,j})\epsilon_{i,k} + Z_i(x_{i,k}) \epsilon_{i,j} \tag{VI} \label{term:VI}\\
    & + \Gamma(x_{i,j}, x_{i,k}) - \Gamma(x,y)\,. \tag{VII} \label{term:VII}
\end{align}


\begin{theorem}
    Consider model \eqref{eq:model} with asynchronous design points under Assumption \ref{ass:distribution} and Assumption \ref{ass:design:localization} and the estimator $\hat \Gamma_{n,p}^a$ in \eqref{eq:cov_est_asynchron} for the covariance kernel $\Gamma$ with a pre-estimation $\hat \mu = \hat \mu^{(i)}$ of the mean function $\mu$ leaving out the $i^\mathrm{th}$ curve. Further we assume that the weights $\wijk xyh$ suffice Assumption \ref{ass:weights} uniformly for all $i = 1, \ldots, n$ with $\zeta = \floor\gamma$ and $h \in (c/p, h_0]$. Then for $0 \leq \beta_0 \leq 1$ and $L, C_Z > 0$ we have the following upper bounds
    \begin{align*}
        \sup_{h \in (c/p, h_0]}\, \sup_{Z \in \mc P (\gamma;L,\beta_0, C_Z)} \,\expec\big[a_{n,p,h}^{-1}\norm{\hat\Gamma_{n,p}^a- \Gamma}_\infty\big] = \mc O (1)\,,
    \end{align*}
    with 
    \begin{align*}
        a_{n,p,h} = \max\bigg(\expec\big[\norm{\hat \mu - \mu}_\infty^2\big], \,\expec\big[\norm{\hat \mu - \mu}_\infty\big]\bigg(\sqrt{\frac{\log(1/h)}{p\,h}} \vee 1 \bigg), h^{\gamma}, \sqrt{\frac{\log(h^{-1})}{n\,p\,h}}, n^{-1/2}\bigg)\,.
    \end{align*}
    If we assume the pre-estimation $\hat \mu$ to be fully independent of the data $Y_{i,j}, i = 1, \ldots, n, j = 1, \ldots, p$ used for the covariance kernel estimation (e.g. by sample splitting), then the rate of convergence is upper bounded by 
    \begin{align*}
        a_{n,p,h} = \max\bigg(\expec\big[\norm{\hat \mu - \mu}_\infty^2\big], h^{\gamma}, \big(1 + \sqrt{\expec\big[\norm{\hat \mu - \mu}_\infty^2\big]}\big)\sqrt{\frac{\log(h^{-1})}{n\,p\,h}}, n^{-1/2}\bigg)\,,
    \end{align*}
\end{theorem}



\begin{proof}
    The rate of \eqref{term:I} is bounded by $\expec\big[\norm{\hat \mu - \mu}_\infty^2\big]$, since
\begin{align*}
    \frac1{n}\sum_{ i = 1}^n\sum_{j < k}^p w_{i;j,k}(x,y;h) \big(\mu(x_{i,j}) - \hat \mu(x_{i,j})\big)\big(\mu(x_{i,k}) - \hat \mu(x_{i,k})\big) & \leq \,\frac{\mathrm{const.}}{n} \sum_{i = 1}^n \, \Csum\, \norm{\hat \mu - \mu}_\infty^2\,,
\end{align*}
by \ref{ass:weights:sum} and \citet[Theorem 1]{berger2023dense}.\\
For the term \eqref{term:II} we estimate
\begin{align*}
    \expec\bigg[\sup_{(x,y) \in T} &\bigg| \frac1{n} \sum_{i = 1}^n \sum_{j< k}^p \wijk xyh \big( \mu(x_{i,j}) - \hat \mu(x_{i,j})\big) \, Z_i(x_{i,k})\bigg|\bigg] \\
    & \leq\expec\bigg[\sup_{(x,y)\in T} \frac1{n} \sum_{i = 1}^n \sum_{j < k}^p \absb{\wijk xyh} \absb{\mu(x_{i,j}) - \hat \mu(x_{i,j})} \, \absb{Z_i(x_{i,k})}\bigg]\\
    & \leq \frac1{n} \sum_{i = 1}^n \expec\Big[\sup_{x \in [0,1]}\absb{\mu(x) - \hat \mu(x)}\Big]  \,\expec\Big[\sup_{x \in [0,1]}  \absb{Z_{i}(x)}\Big] \, \sup_{(x,y) \in T}\sum_{j < k}^p \absb{\wijk xyh} \\
    & \leq \expec\big[\norm{\hat \mu - \mu}_\infty\big]\, \expec\big[M_1 + Z_1(0)\big] \, \Cmax \, \Ccard\,.
\end{align*}
For the term \eqref{term:III} we proceed analogously while taking care of the sub-Gaussian error distribution. 
\begin{align*}
    \expec\bigg[\sup_{(x,y) \in T} &\Big| \frac1{n} \sum_{i = 1}^n \sum_{j< k}^p \wijk xyh \big( \mu(x_{i,j}) - \hat \mu(x_{i,j})\big) \,\epsilon_{i,k}\Big|\bigg] \\
    & \leq\expec\bigg[\sup_{(x,y)\in T} \frac1{n} \sum_{i = 1}^n \sum_{j = 1}^{p-1} \absb{\mu(x_{i,j}) - \hat \mu(x_{i,j})} \, \Abs{\sum_{k = j + 1}^p \wijk xyh \epsilon_{i,k}}\bigg]\\
    & \leq \frac1{n} \sum_{i = 1}^n \sum_{j = 1}^{p-1} \expec\Big[\sup_{x \in [0,1]}\absb{\mu(x) - \hat \mu(x)}\Big] \, \expec\bigg[\sup_{(x,y) \in T}\abss{\sum_{k = j + 1}^p \wijk xyh \epsilon_{i,k}} \bigg]\\
    & = \mc O \bigg( \expec\big[\norm{\hat \mu - \mu}_\infty\big]\, \sqrt{\frac{\log(1/h)}{p\,h}} \bigg)\,,
\end{align*}
where the rate $\expec[\sup_{x,y}\abs{\sum_{k = j + 1}^p \wijk xyh \epsilon_{i,k}}] = \sqrt{\log(1/h)/(p\,h)^3}$ for all $i = 1, \ldots, n,$ $ j = 1, \ldots, p-1$ follows analogously as the rate for the second error term in the mean function estimation in \citet[Theorem 1]{berger2023dense}. \\

The upper bounds for the term \eqref{term:IV} is more involved then the analysis for ii) of Lemma \ref{lem:rates:convergence}, because the maximal inequality from \citet{pollard1990empirical} needs to be applied to the smoothed versions of the processes. However in the proof for the asymptotic normality of the estimator we show that the smoothed version is also manageable. Using the fact that the weights suffice Assumption \ref{ass:weights} uniformly shows that the envelope is also given by \eqref{eq:envelope} and therefore the maximal inequality \citet[Section 7,display (7.10)]{pollard1990empirical} yields the $1/\sqrt n$ rate.\\

Term \eqref{term:V} and \eqref{term:VI} again rely on the fact that the weights suffice Assumption \ref{ass:weights} uniformly for all $i = 1, \ldots, n$ and therefore following the steps of the proofs ii) and iv) of Lemma \ref{lem:rates:convergence} lead to the rate $\sqrt{\log(n\,p)/(n\,p^2\,h^2)}$ and $\sqrt{\log(h^{-1})/(n\,p\,h)}$ respectively.\\

The discretization error in term \eqref{term:VII} becomes small with the rate $h^\gamma$ for each $i = 1, \ldots, n$ and therefore the mean of the errors also decreases with this rate. \\

Now, we assume that the pre-estimation $\hat \mu$ of the mean function $\mu$ is independent of the observations $Y_{i,j}$ used for the covariance estimation with the aim to show lower upper bounds for the rates of the terms \eqref{term:II} and \eqref{term:III}. First we consider term \eqref{term:II} which we denote by $T^{(2)}_{n,p}(x,y)$ such that for
\begin{align*}
    X_{n,i}(x,y) \defeq \frac{1}{\sqrt n} \sum_{j< k}^p \wijk xyh \big( \mu(x_{i,j}) - \hat \mu(x_{i,j})\big) Z_i(x_{i,k})
\end{align*}
we have $T^{(2)}_{n,p}(x,y) = n^{-1/2}\,\sum_{i = 1}^n\,X_{n,i}(x,y)$. For the conditioned version on  $\mu(x_{i,j}) - \hat \mu(x_{i,j}) = \Delta_{i,j}$ we write 
\begin{align*}
    X_{n,i}(x,y)_{\mid \mu(x_{i,j}) - \hat \mu(x_{i,j}) = \Delta_{i,j}} \defeq \frac{1}{\sqrt n} \sum_{j< k}^p \wijk xyh \,\Delta_{i,j}\, Z_i(x_{i,k})
\end{align*}
For given $\mu(x_{i,j}) - \hat \mu(x_{i,j}) = \Delta_{i,j}$ we can show that this process is manageable with respect to the envelope $\Phi_{n,\Delta} = (\phi_{n,1,\Delta}, \ldots, \phi_{n,p,\Delta})$ with
\begin{align*}
    \phi_{n,i,\Delta} = \frac{\Cmax\,\Ccard}{\sqrt n} \max_{j = 1, \ldots, p} \Delta_{i,j} \,\big( \abs{Z_i(0)} + M_i\big)\,,
\end{align*}
by using \ref{ass:weights:sum}. Now, following the steps of the proof of asymptotic normality in Theorem 2 of \citet{berger2023dense} we can use the maximal inequality of \citet[Section 7, display (7.10)]{pollard1990empirical} and Jensen to upper bound the conditional expected value with
\begin{align*}
    \expec\big[\sqrt n \norm{T^{(2)}_{n,p}}_\infty &\,\big|\,\{\mu(x_{i,j}) - \hat \mu(x_{i,j})\mid i = 1, \ldots, n, j = 1, \ldots, p\}\big] \\
    &\leq \tilde C_1 \, \Lambda(1)\, \Cmax\,\Ccard\,\bigg(\frac{1}{n^2} \sum_{i = 1}^n \max_{j = 1, \ldots, p} \Delta_{i,j}^2\,2\,\expec\big[\abs{Z_i(0)}^2 + M_i^2\big] \bigg)^{1/2}
\end{align*}
All in all we obtain with then Jensen inequality
\begin{align*}
    \expec\big[\norm{T_{n,p}^{(2)}}_\infty\big] & = \expec\Big[\expec\big[ \norm{T_{n,p}^{(2)}}_\infty\big|\,\{\mu(x_{i,j}) - \hat \mu(x_{i,j})\mid i = 1, \ldots, n, j = 1, \ldots, p\}\big] \Big] \\
    & \leq \tilde C_1 \,\Lambda(1) \frac{\Cmax\,\Ccard}{\sqrt n} \,\Big( \expec\big[\norm{\mu-\hat \mu}_\infty^2\big]\Big)^{1/2} \,2\,\Big(\expec\big[ \abs{Z_1(0)}^2 + M_1^2\big]\Big)^{1/2}\\
    & = \mc O \Bigg(\sqrt{\frac{\expec\big[\norm{\hat \mu - \mu}_\infty^2\big]}{n}}\Bigg)\,.
\end{align*}

For upper bound of term \eqref{term:III} let 
\begin{align*}
    T_{n,p,h}^{(3)}(x,y) & = \frac{1}{n}\sum_{i = 1}^n \sum_{j <k}^p \wijk xyh \big(\hat \mu(x_{i,j}) - \mu(x_{i,j})\big)\,\epsilon_{i,k}\,.
\end{align*}
We proceed similar to the proof of Lemma \ref{lem:rates:convergence} iv). Instead of conditioning on $Z_i(x_j) = z_{i,j}$ we condition on $\hat\mu(x_{i,j}) - \mu(x_{i,j})  = \Delta_{i,j}$. Setting 
$$ W \defeq \frac{\sigma^2 \,\Clip^2 \,\Ccard^2}{n^2\,p\,h} \sum_{i = 1}^n \max_{j = 1,\ldots, p} \Delta_{i,j}^2 $$
and following the exact same steps as in the referenced proof we obtain the bound
\begin{align*}
    \expec\big[\norm{T_{n,p,h}^{(3)}}_\infty \big] & = \expec\Big[\expec\big[\norm{T_{n,p,h}^{(3)}}_\infty \,\big|\, \{\hat\mu(x_{i,j}) - \mu(x_{i,j})  = \Delta_{i,j}, i = 1,\ldots, n, j = 1, \ldots, p\} \big]\Big] \\
    & \leq \expec\big[\sqrt{W}\big] + \expec\big[\sqrt{W}\big] \bigg( \sqrt{\log(h^{-1})} + \frac{1}{2\,\sqrt{\log(h)}}\bigg)\\
    & = \mc O\Bigg( \sqrt{\frac{\log(h^{-1})\,\expec[\norm{\hat\mu-\mu}_\infty^2]}{n\,p\,h}}\Bigg)
\end{align*}
since $\expec\big[\sqrt{W}\big]  = \mc O \big((\expec[\norm{\hat\mu-\mu}_\infty^2]/(n\,p\,h))^{-1/2}\,\big)$.
\end{proof}



}
\section{Additional numerical results.}
\label{sec:add_num_results}

\begin{figure}[h]
    \centering
    \includegraphics[width = \linewidth]{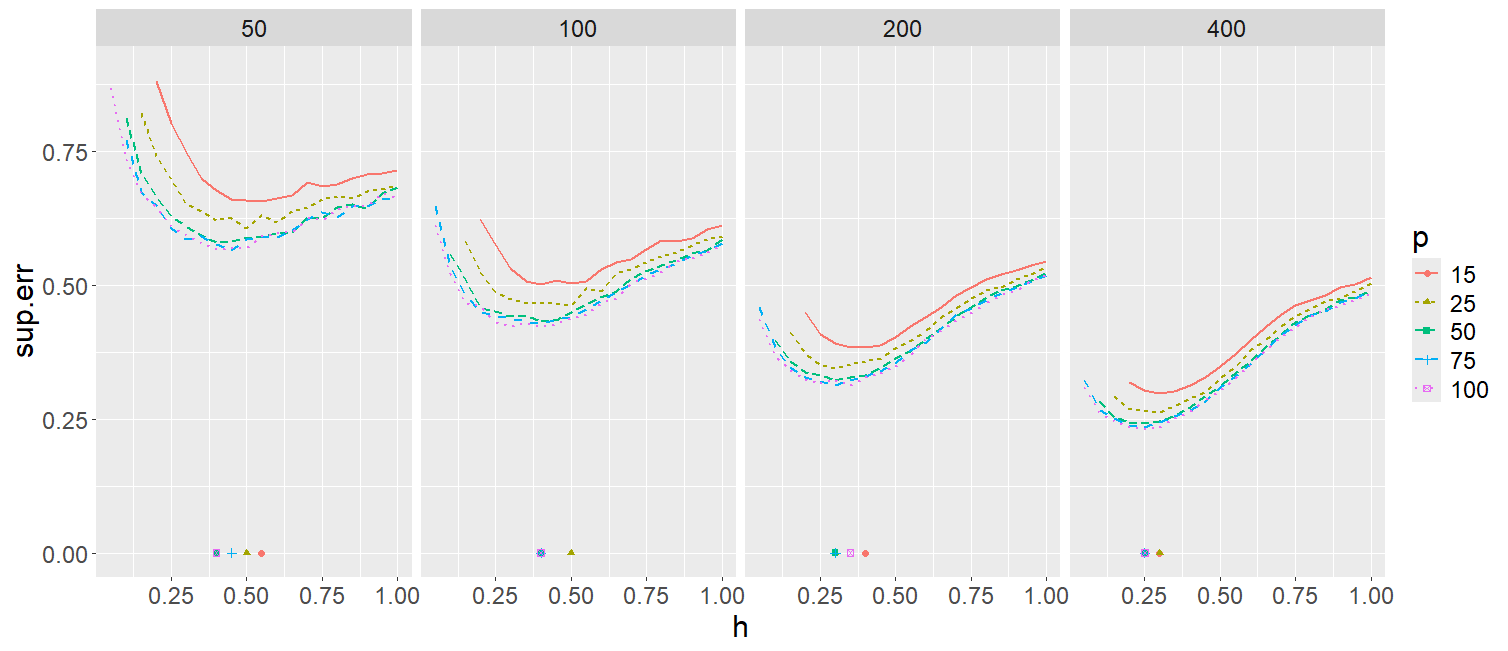}
    \caption{\small Supremum error of the estimation of the covariance kernel of an Ornstein Uhlenbeck process with $\theta = 3$ and $\sigma = 2$. The additional Gaussian noise has standard deviation $\sigma_\epsilon = 0.75$. The graphics show the supremum error for $n = 50, 100, 200, 400$ curves for different $p$. The points on the $x$-axis indicate which bandwidth led to the minimal empirical supremum error. }
    \label{fig:bandwidth_comparison_different_n}
\end{figure}

\begin{figure}[t!]
    \centering
    \includegraphics[width = \linewidth]{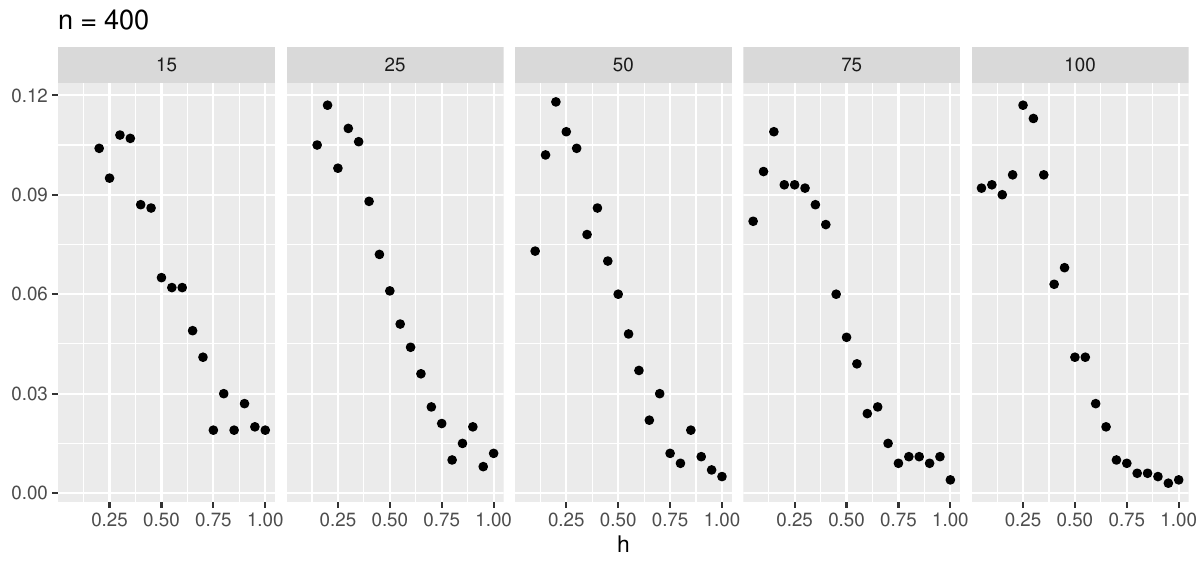}
    \caption{\small \color{black} Y-axis shows the percentage of cases in which the bandwidth on the x-axis was selected by the five fold cross validation as in Figure \ref{fig:5fold_boxplot}.}
    \label{fig:5fold_table}
\end{figure}

\begin{figure}[h]
    \centering
    \includegraphics[width = \linewidth]{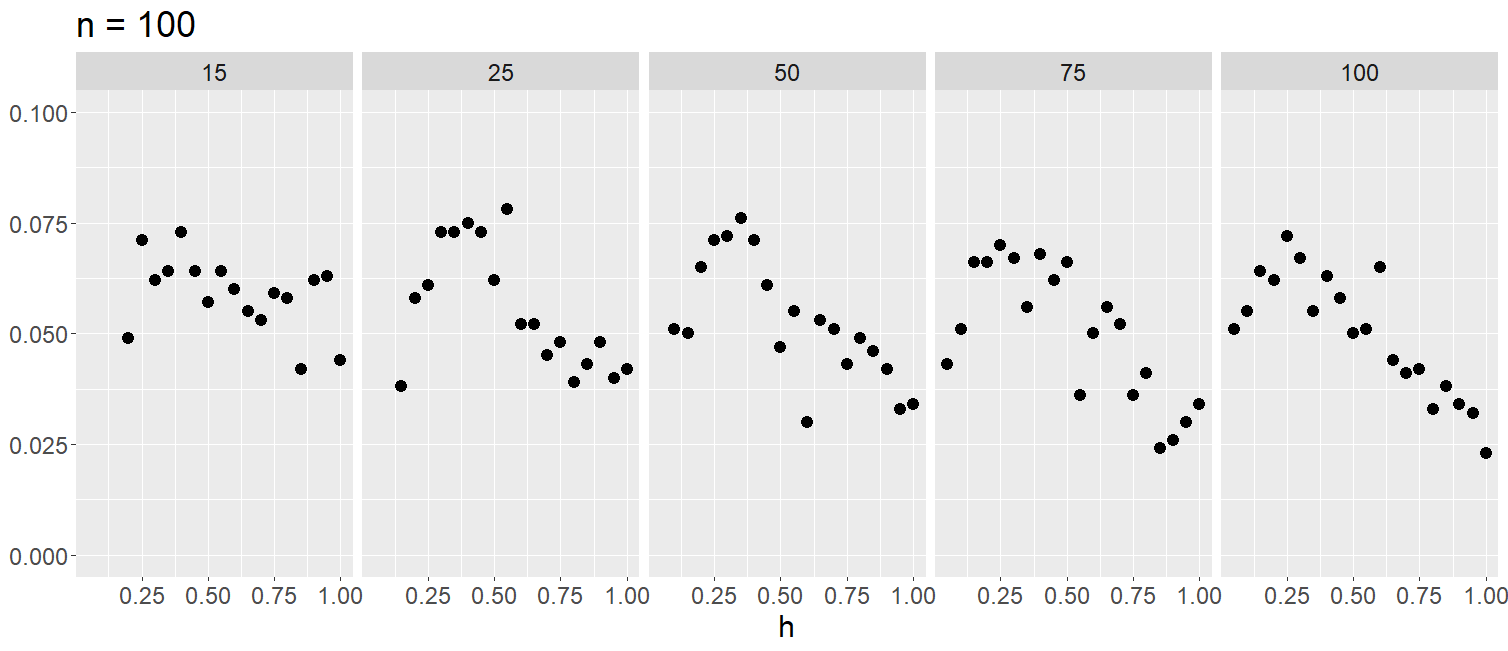}
    \caption{\small Results from $N = 1000$ times using the $5$-fold cross validation for $n = 100$ and various $p$. The procedure does not perform as well as for $n = 400$ in Figure \ref{fig:5fold_table}, which is explainable by Figure \ref{fig:bandwidth_comparison_different_n}.}
    \label{fig:enter-label}
\end{figure}

\end{document}